\documentclass[12pt,reqno]{amsart}
\usepackage{hyperref,cite}
\usepackage{amssymb,amsthm,bbm}
\usepackage{amsfonts,cmtiup,comment,stmaryrd}
\usepackage{graphicx}
\usepackage{mathrsfs,cmtiup}
\usepackage{xcolor}

\makeatletter
\def\blfootnote{\xdef\@thefnmark{}\@footnotetext}
\makeatother

\newtheorem{theorem}{Theorem}[section]
\newtheorem*{theorema}{Theorem A}
\newtheorem*{theoremb}{Theorem B}
\newtheorem*{theoremc}{Theorem E}
\newtheorem*{theoremd}{Theorem F}
\newtheorem{lemma}[theorem]{Lemma}
\newtheorem{proposition}[theorem]{Proposition}
\newtheorem{corollary}[theorem]{Corollary}
\newtheorem*{corollaryc}{Corollary C}
\newtheorem*{corollaryd}{Corollary D}

\theoremstyle{definition}
\newtheorem{example}[theorem]{Example}

\newtheorem{remark}[theorem]{Remark}
\newtheorem*{definition*}{Definition}
\newcommand{\ed}{\end{document}}
\renewcommand{\geq}{\geqslant}
\renewcommand{\leq}{\leqslant}
\renewcommand{\ge}{\geqslant}
\renewcommand{\le}{\leqslant}
\let\le=\leqslant
\let\ge=\geqslant
\let\leq=\leqslant
\let\geq=\geqslant
\numberwithin{equation}{section}

\begin{document}

\title{Rank properties of commutators in finite groups}

\author{Cristina Acciarri}
\address{C.~Acciarri: Dipartimento di Scienze Fisiche, Informatiche e Matematiche, Universit\`a degli Studi di Modena e Reggio Emilia, Via Campi 213/b, I-41125 Modena, Italy}
\email{cristina.acciarri@unimore.it}

\author{Robert M. Guralnick}
\address{Robert M. Guralnick: Department of Mathematics, University of
Southern California, Los Angeles, CA90089-2532, USA}
\email{guralnic@usc.edu}

\author{Evgeny Khukhro}
\address{E. I. Khukhro: Charlotte Scott Research Centre for Algebra, University of Lincoln, U.K.}
\email{khukhro@yahoo.co.uk}

\author{Pavel Shumyatsky}

\address{P. Shumyatsky: Department of Mathematics, University of Brasilia, DF~70910-900, Brazil}
\email{pavel@unb.br}

\thanks{The first author is member of ``National Group for Algebraic and Geometric Structures, and Their Applications'' (GNSAGA–INdAM). The second author was partially supported by  Simons Foundation Fellowship 00019819.  The third author was partially supported by the International Center for Mathematics at SUSTech in Shenzhen. 
The fourth author was partially supported by  FAPDF and CNPq.}
\keywords{Finite groups; Carter subgroup; commutator; derived subgroup; rank}
\subjclass[2020]{Primary 20D20; Secondary 20D45}

\begin{abstract}
For a subset $S$ of a finite group $G$, let $I_G(S)$ denote the set of commutators $[g,x]=g^{-1}g^x$, where $g\in G$ and $x\in S$. Suppose that a finite group $G$ has a Carter subgroup $C$, that is, a nilpotent subgroup containing its normalizer.   Suppose that any subgroup generated by a subset of $I_G(C)$ is $r$-generated. We prove that if $G$ is soluble, then the derived subgroup $G'$ has $r$-bounded rank. We produce examples showing that the solubility condition cannot be dropped. For any finite group, we prove that the rank of $G'$ is $(r,l)$-bounded, where $l$ is the maximum rank of composition factors of $G$ isomorphic to $PSL_2(q)$ for $q\equiv 7\,(\operatorname{mod}8)$. We also prove in the general case that  $G'$ has $r$-bounded rank under the additional condition that  for any $x\in I_G(C)$, any subgroup generated by a subset of $I_G(x)$ is $r$-generated. The proofs rely on the classification of finite simple groups, using which we prove that if a finite simple group $G$ has an element $x$ of prime order $p$ such that  any subgroup generated by a subset of $I_G(x)$ is $r$-generated, then $G$ has $r$-bounded (Pr\"ufer) rank or is isomorphic to $PSL_2(q)$ with $q \equiv 3\,(\operatorname{mod}4)$ when $p=2$, or to $PSL_2(q)$  with $q \equiv -1\,(\operatorname{mod} p)$ when $p\ne 2$.
\end{abstract}

\maketitle

\section{Introduction}

By the rank of a finite group $G$ we mean the least positive integer $r$ such that every subgroup of $G$ can be generated by $r$ elements; this parameter is also called the Pr\"ufer rank. We emphasize that henceforth ``the rank'' always means ``the Pr\"ufer rank'', not to be confused with the Lie rank of a simple group of Lie type, for which we always use the ``Lie'' qualifier.
It is well known that bounds for the rank imply strong structural results for finite groups, as well as  for infinite groups in some classes. Conditions on the ranks are instrumental in the theory of powerful $p$-groups of A.~Lubotzky and A.~Mann  and in applications of this theory for profinite and residually finite groups. The ranks also appear as the dimensions of linear spaces arising as elementary abelian sections related to application of representation theory, like  Zassenhaus' theorem on soluble linear groups. Therefore bounds for the ranks are coveted results
in group theory.

In this paper we obtain a `global' bound for the rank of the derived subgroup $G'$ of a finite group $G$ from a `local' condition on the number of generators of subgroups generated by subsets of commutators of the form  $[g,x]=g^{-1}g^x$, where $g$ is an element of $G$ and $x$ is an element of a Carter subgroup $C$ of $G$. Naturally, such results make sense only if $G$ has Carter subgroups; this is always true when $G$ is soluble by R.\,W.~Carter's theorem~\cite{car}. E.\,P.~Vdovin \cite{vdo1, vdo2,vdo3,vdo4} extended Carter's results \cite{car} to arbitrary finite groups that have Carter subgroups: Carter subgroups are conjugate in any finite group in which they exist, and the image of a Carter subgroup in a quotient group is a Carter subgroup of the image. Vdovin \cite{vdo2} also obtained a complete description of almost simple groups that have Carter subgroups. (Henceforth a finite group $G$ is said to be almost simple if $S\leq G\leq \operatorname{Aut}S$ for a non-abelian simple group~$S$.)

We use the following notation to state our main results: for a subset $H$ of a group~$G$, we denote by $I_G(H)$ the set of commutators $[g,h]=g^{-1}g^h$, where $g\in G$ and $h\in H$, so that $[G,H]$ is the subgroup generated by $I_G(H)$.
Henceforth we write, say,  ``$(a,b,\dots)$-bounded'' to abbreviate ``bounded above by some function depending only on the parameters $a,b,\dots$".
\begin{theorema}
Let $r$ be a positive integer, $G$ a soluble finite group, and $C$ a Carter subgroup of $G$. Suppose that any subgroup generated by a subset of $I_G(C)$ can be generated by $r$ elements. Then the derived subgroup $G'$ has $r$-bounded rank.
\end{theorema}

Theorem~A does not hold without the assumption of solubility, as shown by examples produced in  \S\,\ref{counterexamples}.  However,  such examples exist only for a small class of almost simple groups, and it is possible to obtain a bound for the rank of $G'$ in terms of the ranks of these obstructions.

\begin{theoremb}
Let $r$ and $l$ be positive integers. Suppose that  a finite group $G$ contains a Carter subgroup $C$ such that any subgroup generated by a subset of $I_G(C)$ can be generated by $r$ elements. Let $l$ be the maximum
rank of composition factors of $G$ isomorphic to $PSL_2(q)$ for $q\equiv 7\,(\operatorname{mod}8)$. Then the derived subgroup  $G'$ has $(r,l)$-bounded rank.
\end{theoremb}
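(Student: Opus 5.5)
We reduce Theorem~B to two ingredients: a bound on the nonabelian composition factors coming from the simple-group result quoted in the abstract, and a soluble-type argument modelled on Theorem~A.

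\emph{Step 1: reductions.} Rank is subadditive along normal series, and the hypothesis passes to quotients, since images of Carter subgroups are Carter subgroups (Vdovin) and images of subsets of $I_G(C)$ are subsets of $I_{G/N}(CN/N)$. It therefore suffices to bound the rank of $G'$ by bounding the number and ranks of the nonabelian chief factors inside $G'$, and then controlling the soluble layers as in Theorem~A.

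\emph{Step 2: nonabelian chief factors.} Let $N/M=S_1\times\dots\times S_k$ be a nonabelian chief factor of $G$, with $S_i\cong S$ simple.

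First we bound $k$. Since $C$ is a Carter subgroup, $C$ cannot normalize all of $N/M$ while acting trivially modulo $M$ on many factors: by Vdovin's analysis, $C$ covers the Carter subgroups of the factor, and $C\cap S_i$-type elements act nontrivially. Hence $I_G(C)$ contains, in each orbit of $C$ on the $S_i$, elements generating a subgroup whose number of generators grows with the orbit length or with $k$. For instance, if $x\in C$ acts nontrivially on $S_1$, then the commutators $[g_1\cdots g_k,x]$ for independent choices give a subgroup needing about $k$ generators. This forces $k\le f(r)$.

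Next we bound $S$. For each $S_i$ normalized by a suitable element $x\in C$ of prime order $p$ acting nontrivially, we apply the simple-group theorem to the almost simple group $\langle S_i,x\rangle$. Either $S$ has $r$-bounded rank, or $S\cong PSL_2(q)$ with $q\equiv 3 \pmod 4$ (if $p=2$) or $q\equiv -1\pmod p$. Vdovin's classification of almost simple groups having Carter subgroups, combined with the Carter condition (the normalizer of a Sylow-type subgroup must be self-normalizing nilpotent), should exclude all the exceptional $PSL_2(q)$ except $q\equiv 7\pmod 8$. There, rank is bounded by $l$ by hypothesis. This pruning is case analysis with the list of almost simple groups with Carter subgroups.

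\emph{Step 3: soluble part and assembly.} After Step 2 we have a bound on the rank of the nonabelian chief factors meeting $G'$. Let $R$ be the soluble radical. The group $G/R$ embeds in the automorphism group of its socle, and this socle has bounded rank, so $G/R$ and its derived subgroup have bounded rank, using the boundedness of outer automorphism groups of bounded-rank simple groups. For $R$, we adapt the proof of Theorem~A: work with $[R,C]$, which is generated by $I_R(C)$. The hypothesis of Theorem~A applies to the relevant soluble sections, since $CR/R$ is Carter and $C\cap R$-type arguments give $r$-bounded rank of $[R,C]$-type subgroups. Together with $R=[R,C](C\cap R)$-type factorizations, this bounds the rank of $G'\cap R$ modulo a subgroup of bounded rank.

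\emph{Main obstacle.} The hardest point is passing the hypothesis from $G$ to the soluble radical $R$, whose own Carter subgroup need not be $C\cap R$. I would try to handle this via the chief series directly, bounding each abelian chief factor $V$ inside $G'$ by showing $V=[V,C]$ (since $C$ is self-normalizing nilpotent, fixed points of $C$ on a noncentral chief factor are trivial) and rank$(V)\le r$. I would then bound the number of distinct layers using the nilpotent length argument from Theorem~A. The fallback is a Hall–Higman-type reduction to bounding the Fitting height.
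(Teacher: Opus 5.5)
\textbf{The proposal has a genuine gap in Step~3.} The obstacle you flag there is exactly where the key idea is missing.

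First, you do not need a Carter subgroup of $R$ at all. The group $RC$ is soluble and $N_{RC}(C)\le N_G(C)=C$, so $C$ is a Carter subgroup of $RC$. Theorem~A therefore applies verbatim to $RC$ and gives that $[R,C]\le (RC)'$ has $r$-bounded rank, and likewise $[R,C^g]$ for every conjugate.

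But this controls only one conjugate at a time. What must be bounded is
\[
[R,G]=\prod_{g\in G}[R,C^g],
\]
the normal closure of $[R,C]$, and a normal closure of a bounded-rank subgroup need not have bounded rank. The missing step is that $G/R$ is generated by $(r,l)$-boundedly many conjugates $C_1,\dots,C_m$ of $C$ (Lemma~\ref{114}). This uses three facts: Belyaev--Hartley forces $C$ to meet each $C$-orbit block of simple factors of $F^*(G/R)$ nontrivially; Hall--Liebeck--Seitz/Guralnick--Saxl generates each bounded-rank factor by boundedly many conjugates; and $F^*(G/R)$ has bounded index. Then $L=\prod_{i=1}^m[R,C_i]$ is normal in $G=R\langle C_1,\dots,C_m\rangle$ and has bounded rank. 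Modulo $L$, the radical $R$ commutes with every conjugate of $C$ and is therefore central, and Lubotzky--Mann applied to $G/Z(G)$ finishes.

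Your fallback via chief series cannot replace this. Knowing that every abelian chief factor inside $G'$ has rank at most $r$ and that the Fitting height is bounded says nothing about the rank of $G'$. In a $p$-group every chief factor has order $p$ and the Fitting height is $1$, yet $G'$ can have arbitrarily large rank. Bounding the rank needs the non-abelian information used in Lemma~\ref{last} (powerful $p$-groups) and Lubotzky--Mann, that is, Theorem~A applied to the right soluble subgroup, namely $RC$.

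\textbf{Step~2 is also looser than it needs to be.}
\begin{itemize}
\item Theorems~E and~F require an element of prime order \emph{inside} the simple group, not merely one acting on it. You need Vdovin's tables to produce an involution in $C\cap S$, or an element of order $3$ when $S=PSL_2(3^{2k+1})$.
\item Excluding $PSL_2(p^e)$ with $p\equiv 3\pmod{8}$ needs the explicit argument: here $C$ contains an involution inverting a Sylow $p$-subgroup $U$, so $I_U(x)=U$.
\item Bounding the number of factors requires commuting elements of a \emph{common} prime order in $I_G(C)$. The paper uses Glauberman's $Z^*$-theorem for factors normalized by $C$, and the elements $[x_i,c_i]$ for pairs of factors swapped by $c_i$.
\item Factors not normalized by $C$ need their own rank argument, which your ``normalized by a suitable $x$'' does not cover. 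The paper projects $[A,c]$ back onto $A$ and then applies the Gorchakov--Merzlyakov--Hall lemma.
\end{itemize}
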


Note that the rank of $PSL_2(q)$, where $q=p^k$ for a prime $p$, is equal to $k$, unless $k=1$ and then the rank is~$2$.

Thus, in Theorems~A and B the `local' property of $r$-generation of any subgroup generated by a subset of $I_G(C)$ implies the `global' property of $r$-bounded (respectively $(r,l)$-bounded) generation of every subgroup of $G'$.

We state two corollaries of Theorem~B, which do not require restrictions on composition factors under additional hypotheses.

\begin{corollaryc}\label{c-odd}
Let $r$ be a positive integer. Suppose that a finite group $G$ has a Carter subgroup $C$ of order not divisible by~$8$ such that any subgroup generated by a subset of $I_G(C)$ can be generated by $r$ elements. Then the derived subgroup  $G'$ has $r$-bounded rank.
\end{corollaryc}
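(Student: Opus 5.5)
Corollary C follows from Theorem B once we show that, when $8\nmid |C|$, every composition factor of $G$ of the form $PSL_2(q)$ with $q\equiv 7\pmod 8$ has $r$-bounded rank. Then $l$ is $r$-bounded, and Theorem B gives that the rank of $G'$ is $r$-bounded.

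The plan is to pass to the almost simple group induced by such a factor. Let $H/K$ be a chief factor of $G$ whose composition factors include $S\cong PSL_2(q)$, $q\equiv 7\pmod 8$. Then $H/K\cong S^m$. Set $N=N_G(S_1)$ for one simple factor $S_1$, and let $A=N/C_N(S_1)$, so that $S\le A\le \operatorname{Aut}S$.

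We then use Vdovin's results quoted in the introduction in two steps.
\begin{itemize}
\item The image of a Carter subgroup in a quotient is a Carter subgroup. So $CK/K$ is a Carter subgroup of $G/K$.
\item Using Vdovin's description of how Carter subgroups sit over a minimal normal subgroup $S^m$, we get a Carter subgroup $D$ of $A$ whose order divides $|C|$ (it arises as the image of a subgroup of $C$), together with the commutator hypothesis transferred to $A$. This transfer is where care is needed: we will use that $I_{G}(C)$ maps onto $I_{G/K}(CK/K)$, and then restrict to commutators with elements of $H$ lying in the $S_1$-coordinate.
\end{itemize}

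Next we use Vdovin's classification of almost simple groups with Carter subgroups \cite{vdo2}. For $A$ with socle $PSL_2(q)$, $q=p^k\equiv 7 \pmod 8$, the Carter subgroup always contains a Sylow $2$-subgroup of $A$ (roughly, Carter subgroups there are $2$-groups extended by field-automorphism parts). A Sylow $2$-subgroup of $PSL_2(q)$ with $q\equiv 7\pmod 8$ is dihedral of order $(q+1)_2\ge 8$. Hence $8$ divides $|D|$, and therefore $8$ divides $|C|$. This contradicts the hypothesis, so such composition factors cannot occur at all, and we may take $l$ bounded trivially, say $l=1$.

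The main obstacle is the Carter-subgroup bookkeeping in the second step: we need $|D|$ to divide $|C|$, or at least that the $2$-part of $D$ comes from $C$. I would first try to prove this directly. A Carter subgroup of $G$ covers a Carter subgroup of each chief factor's induced almost simple group in the sense of Vdovin. If this fails, the fallback is to show directly that $C\cap H$ projects onto a subgroup of $S_1$ containing a Sylow $2$-subgroup.
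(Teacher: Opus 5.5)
Your overall plan matches the paper's key fact, Lemma~\ref{112c}(1): when $8\nmid|C|$ there is no composition factor $PSL_2(q)$ with $q\equiv 7\pmod 8$, because the relevant Carter subgroup contains a Sylow $2$-subgroup of order $(q+1)_2\ge 8$. The gap is the transfer step you flag yourself, and with your choice $A=N_G(S_1)/C_G(S_1)$ it is false. The image of $N_C(S_1)$ need not be a Carter subgroup of $A$, and the Carter subgroups of $A$ need not have order dividing $|C|$.

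Here is an example. Let $S=PSL_2(7^3)$ and let $\phi$ be a field automorphism of order $3$. Let $P\le PSL_2(7)=C_S(\phi)$ be a Sylow $2$-subgroup of $S$; it is dihedral of order $8$ and self-normalizing in $S$. Let $G=(S\times S)\rtimes\langle\sigma,\tau\rangle$ with $\langle\sigma,\tau\rangle\cong S_3$, where $\sigma$ acts as $(\phi,\phi^{-1})$ and $\tau$ swaps the two factors. Since $\langle\tau\rangle$ is self-normalizing in $S_3$, the normalizer of $C=(P\times P)\langle\tau\rangle$ lies in $(S\times S)\langle\tau\rangle$. It therefore lies in $N_S(P)^2\langle\tau\rangle=C$, so $C$ is a Carter subgroup of $G$ of order $2^7$. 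However, $N_G(S_1)/C_G(S_1)\cong S\langle\phi\rangle$. The image of $N_C(S_1)=P\times P$ is $P$, which $\phi$ normalizes. The Carter subgroups of $S\langle\phi\rangle$ are the conjugates of $P\times\langle\phi\rangle$, of order $24\nmid|C|$. Here $8\mid|C|$ anyway, but not by your mechanism.

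The correct ambient group is $HC$ (modulo $K$), not $G$. If $C$ normalizes $S_1$, then $C$ is a Carter subgroup of $S_1C$. Theorem~\ref{t-cart} then makes its image a Carter subgroup of the almost simple group $S_1C/C_{S_1C}(S_1)$, and Vdovin's table puts a Sylow $2$-subgroup of $S_1$ into it. This is exactly Lemma~\ref{112c}(1).

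If $C$ moves $S_1$, no quotient argument of this kind is available. You would need a separate reduction statement: that the image of $N_C(S_1)$ is a Carter subgroup of $N_{HC}(S_1)/C_{HC}(S_1)$. Your proposal does not prove this, and it is not among the facts quoted in the paper. Your fallback, that $C\cap H$ projects onto a subgroup containing a Sylow $2$-subgroup, only restates what has to be shown.

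The paper avoids this by not using Theorem~B as a black box. It reruns the proof of Theorem~B with Lemma~\ref{112c} in place of Lemma~\ref{112}. In that proof $l$ enters only through Lemma~\ref{112}(b), which is applied to the simple factors of $F^*(G/R(G))$ normalized by $C$. The factors moved by $C$ already have $r$-bounded rank by Lemma~\ref{l-113-2}, via the $[A,c]$ argument, whatever their isomorphism type. The soluble radical is handled by Theorem~A.

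So only the $C$-invariant case is ever needed, and there your Sylow-$2$ argument works. To repair your route, you can either prove the reduction statement for factors permuted by $C$, or restrict the claim ``no such composition factor'' to the $C$-invariant socle factors of $G/R(G)$ and argue inside the proof of Theorem~B as the paper does.
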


Another corollary also gives a global rank-type conclusion under a stronger local  condition; it is rather a consequence of the proof of Theorem~B.

\begin{corollaryd}\label{c-third}
Let $r$ be a positive integer. Suppose that a finite group $G$ has a Carter subgroup $C$ such that
\begin{enumerate}
\item[\rm (a)]  any subgroup generated by a subset of $I_G(C)$ can be generated by $r$ elements; and
\item[\rm (b)] for any $x\in I_G(C)$, any subgroup generated by a subset of $I_G(x)$ can be generated by $r$ elements.
\end{enumerate}
Then the derived subgroup  $G'$ has $r$-bounded rank.
\end{corollaryd}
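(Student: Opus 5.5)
Corollary D follows from Theorem~B once we bound $l$ in terms of $r$. Let $S\cong PSL_2(q)$ with $q\equiv 7\pmod 8$ be a composition factor of $G$. We must show that $\operatorname{rk}(S)$ is $r$-bounded. Since $q=p^k$ with $q\equiv 7\pmod 8$, necessarily $k$ is odd, so $q\equiv 3\pmod 4$. Such factors are exactly the obstructions in the classification result for simple groups stated in the abstract, with $p=2$. So the task is to show that condition (b) rules them out once they are large.

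\textbf{Step 1: reduction to an almost simple section.} Take a chief factor $H/K$ of $G$ that involves $S$, so $H/K\cong S^m$. Set $\bar G=G/K$ and let $N=N_{\bar G}(S_1)$ for one factor $S_1\cong S$ of $H/K$. By Vdovin's results, $\bar C=CK/K$ is a Carter subgroup of $\bar G$. We then pass to the almost simple group $A=N/C_{\bar G}(S_1)$ and use the standard transfer of Carter subgroups, as in the proof of Theorem~B. Conditions (a) and (b) survive quotients: images of commutators are commutators, and a subgroup generated by images lifts to a subgroup generated by commutators. Passing to normalisers needs more care. I would instead choose the relevant commutators inside $H$, namely $[h,c]$ with $h\in H$. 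For such commutators the projection to $S_1$ behaves well, because $I_{\bar G}(x)$ contains $I_{S_1}(x)$ whenever $x$ normalises~$S_1$.

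\textbf{Step 2: producing an element of $I_G(C)$ that acts on $S_1$ as an element of prime order.} This is the main obstacle. We need some $x\in I_G(C)$ whose image acts on $S_1$ nontrivially as an automorphism $y$ of prime order. Then $I_{S_1}(y)\subseteq$ image of $I_G(x)$, so by (b) every subgroup generated by a subset of $I_{S_1}(y)$ is $r$-generated.

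First, $[H,C]$ covers $H/K$ modulo $K$. Otherwise $C$ would centralise a section of the nonabelian chief factor, contradicting the fact that Carter subgroups of almost simple groups with socle $PSL_2(q)$ act nontrivially; Vdovin's description should give this. Hence some commutator $[h,c]$ projects to a nontrivial element of $S_1$. Replace it by a suitable power. A power of $x$ need not lie in $I_G(C)$, but $I_G(x^n)\subseteq\langle I_G(x)\rangle$-type containments do not directly give (b) either. So I would instead pick $x$ of prime order in the image directly: choose $x=[h,c]$ with $h$ taken so that the $S_1$-projection is an involution, or an element of odd prime order. This is possible because elements $[s,c]$ with $s\in S_1$ range over a set that generates $S_1$ and, by a counting argument in $PSL_2(q)$, contains elements of prime order.

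Alternatively, we can avoid requiring prime order. Apply the simple-group theorem to the cyclic group generated by $y$: any element $y$ of $S_1$ has a power $y^e$ of prime order $s$, and $I_{S_1}(y^e)\subseteq \langle I_{S_1}(y)\rangle$. What we really need, though, is an $r$-bound on generation for subsets of commutators. For this I would reprove directly, for $PSL_2(q)$ with large $k$, that for any nontrivial $y$ the set $I_{S}(y)$ contains a subset generating an elementary abelian subgroup of rank about $k$ inside a Borel subgroup. Conjugates $y^{-1}y^g$ for $g$ in a unipotent subgroup $U$ normalised appropriately give many elements of $U\cong \mathbb F_q^+$ spanning a large subspace. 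This gives $k\le r$ up to bounded error, hence $\operatorname{rk}(S)$ is $r$-bounded.

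\textbf{Step 3: conclusion.} Since $l$ is $r$-bounded, Theorem~B gives that $G'$ has $(r,l)$-bounded, hence $r$-bounded, rank.
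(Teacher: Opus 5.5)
Your reduction (bound $l$ in terms of $r$, then quote Theorem~B) is legitimate and essentially equivalent to the paper's proof, which reruns Theorem~B with Lemma~\ref{112c} in place of Lemma~\ref{112}. Factors $S_1$ not normalized by $\bar C$ are handled by (a) alone, as in Lemma~\ref{l-113-2}. The genuine gap is Step~2, which carries the whole content of the corollary.

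Your first route picks $x\in I_G(C)$ whose projection $y$ on $S_1$ is an involution or has odd prime order, and then applies Theorem~E or~F. But for $S_1\cong PSL_2(q)$ with $q\equiv 7\pmod 8$:
\begin{itemize}
\item involutions are exactly the exceptional case (2) of Theorem~E, since $q\equiv 3\pmod 4$;
\item elements of odd prime order $p\mid q+1$ are exactly the exceptional case of Theorem~F.
\end{itemize}
Unless you control which class $y$ lies in, this route yields nothing.

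Your alternative route claims that for \emph{every} nontrivial $y\in S$, some subset of $I_S(y)$ generates an elementary abelian subgroup of rank about $k$. This is false. Lemma~\ref{l-6.1} shows that for $q=p^e$ with $p\equiv 7\pmod{16}$ and $e$ odd, every subgroup generated by a subset of $I_S(y)$, for $y$ a $2$-element, is $2$-generated. Your heuristic with $y^{-1}y^g$, $g\in U$, works only when $y$ normalizes $U$, i.e.\ lies in a Borel subgroup. Elements of the nonsplit torus do not, and this includes all involutions, because Borel subgroups of $PSL_2(q)$ have odd order when $q\equiv 3\pmod 4$.

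The missing idea is that (b) concerns elements $x\in I_G(C)$, not elements of $C$. You must construct such an $x$ whose projection lies in a Borel subgroup of $S_1$, and this needs input about $C$.
\begin{itemize}
\item By Vdovin, when $\bar C$ normalizes $S_1$, the image of $\bar C$ in $S_1\bar C/C_{S_1\bar C}(S_1)$ contains a Sylow $2$-subgroup. Hence it contains an inner involution $\tau$ of $S_1$.
\item Some split torus $T'$, of odd order $(q-1)/2>1$, is inverted by $\tau$.
\item Since all involutions of $S_1$ are conjugate, each $s\in T'$ has the form $s=\tau^g\tau=[g,\tau]$ with $g\in S_1$. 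Lifting $g$ and $\tau$ gives $x\in I_G(C)$ whose image in $S_1$ is $s\ne 1$.
\item $I_{S_1}(s)$ contains the whole unipotent radical $U'$ normalized by $T'$, which is elementary abelian of rank $k$. So (b) gives $k\le r$.
\end{itemize}
The paper packages exactly this step by applying Theorem~\ref{t-1-2} to the Sylow $2$-subgroup $T\le\bar C$ in Lemma~\ref{112c}.
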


The proofs of Theorem~B and Corollaries~C  and~D depend on the classification of finite simple groups. We use  this  classification for proving the following two results on finite simple groups, which are crucial in the proof of Theorem~B. These results  should be of general interest, and they are proved in greater generality than needed for application in the proof of Theorem~B.

\begin{theoremc}
 Let $r$ be a positive integer,  $G$ be a finite  simple group, and $x \in G$ an involution. Suppose that every subgroup generated by a subset of $I_G(x)$
can be generated by at most $r$ elements.  Then one of the following holds:
\begin{enumerate}
\item the  rank of $G$ is $r$-bounded; or
\item  $G = PSL_2(q)$ with $q \equiv 3\,(\operatorname{mod}4)$.
\end{enumerate}
\end{theoremc}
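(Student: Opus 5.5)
The strategy is to go through the classification and, for each family of simple groups, find inside $\langle I_G(x)\rangle$ a subgroup that needs many generators but is generated by a subset of $I_G(x)$. The basic observation is the following. If $x$ inverts an element $y$, that is, $y^x=y^{-1}$, then $[y,x]=y^{-2}$. More generally, if $x$ inverts every element of an abelian subgroup $A$, then every element of $A^2=\{a^2:a\in A\}$ lies in $I_G(x)$. Hence $A^2$ is a subgroup generated by a subset of $I_G(x)$, and so $d(A^2)\le r$. For $A$ an elementary abelian $p$-group with $p$ odd, this gives $\operatorname{rank}(A)\le r$.

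A second source of commutators is centralizers. If $g\in C_G(x)$ and $t$ is any involution, then $[t,x]$ with $t$ conjugate to $x$ gives products $x^tx$ of two conjugates of $x$. So every subgroup generated by a set of elements of the form $x^gx$ is constrained. In particular, for an elementary abelian $2$-subgroup $E$ containing $x$ and consisting of conjugates of $x$ (times $x$), we get $d(E)\le r+1$.

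The plan is then to bound the Lie rank and the field degree of $G$ separately, using the standard fact that the Prüfer rank of a group of Lie type is bounded in terms of its Lie rank and $\log_p q$. Alternating and sporadic groups are handled directly. In $A_n$, an involution $x$ moving $2k$ points, together with its conjugates in disjoint supports, yields large elementary abelian subgroups of products $x^gx$. Even more simply, $x$ inverts a product of about $n/6$ disjoint $3$-cycles once $x$ is chosen suitably inside $\langle I(x)\rangle$, which forces $n$ to be $r$-bounded.

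For groups of Lie type in characteristic $p$, take a maximal torus $T$ inverted by an involution. When $x$ acts as $-1$ on a (possibly split or twisted) torus of rank comparable to the Lie rank, the squares give an abelian subgroup of rank about the Lie rank, which bounds the Lie rank. For the field degree, use root subgroups. If $x$ is a unipotent involution ($p=2$), then $[U_\alpha,x]$ with $U_\alpha$ a root group not centralized by $x$ contains a full elementary abelian group of order $q$, of rank $\log_2 q$. If $x$ is semisimple, it acts on some root group $U_\alpha\cong \mathbb F_q^+$ by a scalar $\lambda\ne 1$. Then $[u,x]=u^{\lambda-1}$ runs over all of $U_\alpha$, giving rank $\log_p q$.

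The main obstacle will be to show that such an $x$ always exists up to conjugacy, except in $PSL_2(q)$ with $q\equiv 3\pmod 4$. In $PSL_2(q)$ with $q\equiv 3 \pmod 4$, the involution is not diagonalizable over $\mathbb F_q$: it lies only in a nonsplit torus, and the commutators stay inside dihedral subgroups. This is why the case is genuinely exceptional. For $q\equiv 1\pmod 4$ or $q$ even, the involution normalizes a root subgroup in the way needed. For higher Lie rank, every involution can be placed in a Levi subgroup, or acts nontrivially on some root group containing an $SL_2(q)$-type subsystem with $x$ acting as a split torus element. A case check over the Lie types, possibly using known classifications of involution centralizers (Aschbacher–Seitz for $p=2$, Gorenstein–Lyons–Solomon for $p$ odd), should supply this. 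Care is needed for small fields and twisted groups, where I would fall back on the torus-inversion argument or use a unipotent $x$.
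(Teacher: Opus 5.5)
There is a genuine gap in the Lie-type case, at exactly the step you postpone as ``the main obstacle''. Your Lie-rank bound needs the \emph{given} involution $x$ to invert a torus of rank comparable to the Lie rank. That is false for many classes, so no case check can supply it. ``Up to conjugacy'' does not help either: $x$ is arbitrary, so every class has to be handled.

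For instance, let $x\in PSL_n(q)$, $q$ odd, be the image of $\operatorname{diag}(-1,-1,1,\dots,1)$, or let $x$ be a transvection in $SL_n(2^e)$. If $x$ inverts $y$, then $y^{-2}=[y,x]=x^yx$ is the image of an element fixing pointwise a subspace of codimension at most $4$. Hence for every abelian $p'$-subgroup $A$ inverted by $x$, the group $A^2$ acts nontrivially only on a subspace of bounded dimension and has bounded rank. So your torus argument yields nothing as $n\to\infty$. Your ``second source'' (commuting conjugates of $x$) would rescue these two particular classes, but you never set up a dichotomy covering all involution classes of all classical types.

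The field-degree step has a parallel defect. A semisimple involution need not lie in a split torus, and then it may normalize no root subgroup at all. For example, the image of $J\oplus\dots\oplus J$, $J^2=-1$, in $PSL_{2m}(q)$ with $q\equiv 3\,(\operatorname{mod}4)$ fixes no line. Also, for $p=2$, knowing that $[U_\alpha,x]$ \emph{contains} an elementary abelian group of order $q$ bounds nothing. The hypothesis controls $d(H)$ for $H$ generated by a subset of $I_G(x)$, not the ranks of subgroups of $H$. You need the commutators themselves to generate such a group. In $PSL_2(2^e)$ the paper uses $[t,x]$ with $t$ in the torus normalizing the root group $U\ni x$, and these generate $U$.

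The missing idea is to work with unipotent radicals of parabolic subgroups rather than tori. Your own inversion observation works there: $\operatorname{diag}(-1,-1,1,\dots,1)$ inverts the abelian unipotent group $\{1+\sum_{j\ge 3}c_jE_{1j}\}$, of rank $e(n-2)$.

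\begin{itemize}
\item For $p$ odd and a lift of $x$ with eigenvalues $\pm1$, the paper takes a totally singular $W$ of dimension at least about $n/4$ in the larger eigenspace, and $M\ne 0$ the other eigenspace. Then $W\otimes M$ is an $x$-invariant elementary abelian quotient of the unipotent radical $Q$ of the stabilizer of $W$, and $x$ acts on it as $-1$. So $[Q,x]$, which is generated by $I_Q(x)$, needs at least $\dim W$ generators.
\item If the lift has order divisible by $4$, then $V$ splits into $x$-invariant nondegenerate $2$-dimensional pieces ($4$-dimensional in the orthogonal case). Each piece contributes an involution of $I_G(x)$, and these commute.
\item For $p=2$ one takes a unipotent radical $Q\ni x$ with $[N_G(Q),x]\Phi(Q)/\Phi(Q)$ of rank at least $Ken^2$.
\item The field degree is handled the same way. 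For $p$ odd and $G\ne PSL_2(q)$, $x$ centralizes a nontrivial unipotent element, so it lies in a proper parabolic $P$ with unipotent radical $Q$. Since $C_P(Q)\le Q$, $x$ acts nontrivially and $\mathbb{F}_q$-linearly on $Q/\Phi(Q)$, so $[Q,x]$ needs at least $e$ generators.
\end{itemize}

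Your alternating-group sketch is fine only if you combine its two tricks: disjoint conjugates when $x$ has small support, and inverted $3$-elements when it has large support. You cannot ``choose $x$ suitably''.
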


\begin{theoremd}
  Let $p$ be an odd prime, and $r$ a positive integer.  Let $G$ be a finite nonabelian simple group of order divisible by $p$, and $x\in G$ an element of order $p$.
  Suppose that every subgroup generated by a subset of $I_G(x)$
can be generated by at most $r$ elements.  Then one of the following holds:
\begin{enumerate}
\item the rank of $G$ is $r$-bounded; or
\item  $G=PSL_2(q)$  with
$q \equiv -1\,(\operatorname{mod} p)$.
\end{enumerate}
\end{theoremd}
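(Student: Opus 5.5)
Theorem F is proved by running through the classification of finite simple groups. The idea is to locate, inside $G$, a large elementary abelian (or at least large-rank) subgroup generated by elements of $I_G(x)$, or by elements of a subgroup generated by such commutators. Since $x$ has odd order $p$, it is not an involution, so it is more convenient to work with subgroups $H\le G$ that are normalized by $x$.

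The basic observation is the following. If $H$ is an $x$-invariant subgroup, then $I_H(x)\subseteq I_G(x)$. Moreover, if $A\le G$ is an abelian subgroup normalized by $x$ with $A=[A,x]$, then every element of $A$ lies in $I_A(x)$, because the map $a\mapsto[a,x]$ is a homomorphism on $A$ whose image is $[A,x]=A$. Hence every subgroup of $A$ is generated by a subset of $I_G(x)$, and so $\operatorname{rank}A\le r$. More generally, the hypothesis gives
$$\operatorname{rank}[A,x]\le r$$
for every $x$-invariant abelian subgroup $A$. The task is therefore to show that, apart from the stated exceptions, $G$ contains an $x$-invariant abelian subgroup $A$ with $\operatorname{rank}[A,x]$ at least a fixed function tending to infinity with $\operatorname{rank}G$. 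Since the groups of bounded rank are settled, we may assume $\operatorname{rank}G$ is large. Sporadic groups are then excluded, and the remaining cases are alternating groups of large degree and groups of Lie type of large Lie rank or over large fields.

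\emph{Alternating groups $A_n$.} Here $x$ is a product of $p$-cycles. Take a $p$-cycle $c$ in the support of $x$, together with $m$ other points. Choose a suitable product of disjoint $3$-cycles, or of double transpositions, interacting with $c$, so that the commutators with $x$ generate an elementary abelian $2$-subgroup of rank about $n/p$. If $n$ is large relative to $p$ this rank is unbounded. If instead $p$ is large, use several cycles of $x$ at once to produce many commuting commutators. An alternative is to find a subgroup $A_{k}\times\dots\times A_k$ (many factors) normalized by $x$ on which $x$ acts nontrivially in each factor. Then the commutators $[g_i,x]$ with $g_i$ in distinct factors generate a direct product requiring many generators.

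\emph{Groups of Lie type in characteristic $s$.} There are two cases.
\begin{itemize}
\item If $p=s$, take $x$ unipotent, in a root subgroup after conjugation. Then $[U,x]$ contains a large elementary abelian subgroup: for example, a long root element acts on a unipotent radical, and $[Q,x]$ has rank roughly the field degree times the Lie rank. This gives an unbounded rank unless the Lie rank and the field degree are both bounded, and in that case $G$ itself has bounded rank.
\item If $p\ne s$, take $x$ semisimple and choose an $x$-invariant maximal torus $T$ or a root subgroup $U_\alpha$ with $[U_\alpha,x]=U_\alpha$. This works whenever $x$ acts nontrivially on some root subgroup, since $U_\alpha$ is abelian and of rank equal to the field degree $f$. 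Combining many root subgroups for mutually orthogonal roots on which $x$ acts nontrivially gives rank about $f\cdot(\text{Lie rank})$.
\end{itemize}
The difficulty arises when $x$ normalizes no root subgroup in a nontrivial way, for instance when $x$ is regular semisimple in a Coxeter-type torus. This is exactly the situation of $PSL_2(q)$ with $p\mid q+1$, where $x$ lies in a nonsplit torus, all $x$-invariant $s$-subgroups are trivial, and the invariant abelian subgroups are cyclic or small. This is the expected exception. For larger Lie rank, even when $x$ is regular in an anisotropic torus, one passes to a Levi subgroup or to a subgroup $SL_2(q^k)$ or $SL_2(q)^m$ on which $x$ acts as a diagonal-type element of a split torus. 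Alternatively one uses that $[G,x]$ contains conjugates of $x^{-1}x^g$ generating a unipotent subgroup of large rank. When $p\mid q-1$ in $PSL_2(q)$, $x$ is split and normalizes a Borel subgroup $U\rtimes T$ with $[U,x]=U$ of rank $f$. When $p\mid q+1$ and $q=s^f$ with $f$ even, we pass to the subfield subgroup: it contains a split torus of order $q_0-1$ with $q_0=s^{f/2}$, in which $p$ may appear. Otherwise the case falls into the exception.

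The main obstacle is the semisimple case in groups of bounded Lie rank but large field, where $x$ lies in a nonsplit torus, notably in $PSL_3^\epsilon(q)$, ${}^2B_2$, ${}^2G_2$, ${}^3D_4$ and similar groups. Here one must produce an $x$-invariant unipotent subgroup $V$ with $[V,x]$ of large rank. The approach I would try first is to find a subsystem subgroup $SL_2(q^a)$ or a parabolic subgroup normalized by a conjugate of $x$ whose action on the unipotent radical is nontrivial, using the known structure of centralizers of semisimple elements. If that fails, a fallback is a counting argument showing that $\langle x,x^g\rangle$ can be chosen to be a large subfield-type subgroup, which can then be used to bound the rank.
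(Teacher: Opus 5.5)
\textbf{There is a genuine gap, in exactly the case you single out as the main obstacle:} $s\ne p$ and $x$ lies in no proper parabolic subgroup. This case is not marginal. It includes an element $x$ of order $p$ in a Singer cycle of $SL_n(q)$, with $p$ a primitive prime divisor of $q^n-1$ and $n$ arbitrarily large. It also includes elements of anisotropic tori in $PSL_3^\epsilon(q)$, ${}^2B_2(q)$, etc., with $q$ large.

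None of your remedies can work here.
\begin{itemize}
\item By the Borel--Tits theorem, an element normalizing a nontrivial $s$-subgroup lies in a proper parabolic subgroup. So $x$ cannot lie in a Levi subgroup. It cannot act as a split-torus element on any $SL_2(q^k)$ or $SL_2(q)^m$, since it would then normalize a unipotent subgroup of it. It cannot normalize a root subgroup or a unipotent radical.
\item Your basic tool $\operatorname{rank}[A,x]\le r$ for $x$-invariant abelian $A$ gives nothing. Such an $A$ has trivial $s$-part. For instance, for a Singer-type $x$ in $SL_n(q)$, Clifford's theorem ($x$ irreducible on $V$, $p>n$) forces $A$ to act homogeneously on $V$, so $A$ is cyclic.
\item Choosing $\langle x,x^g\rangle$ to be a large subfield subgroup merely reproduces the same configuration in a smaller group.
\item Your one-line alternative is the right target: that $I_G(x)$ contains many unipotent elements $x^{-1}x^g$ generating a subgroup needing many generators. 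But it is the whole difficulty, and nothing in the proposal proves it.
\end{itemize}

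\textbf{The missing idea.} These unipotent commutators lie in a Sylow $s$-subgroup that $x$ does \emph{not} normalize, and one finds them by character theory.
\begin{itemize}
\item \emph{Large Lie rank.} The paper counts triples $(x^{-1},x^w,u^v)$ with product $1$, where $u$ is regular unipotent, via the class-multiplication formula $\frac{|x^G|^2|u^G|}{|G|}\sum_\chi\chi(x^{-1})\chi(x)\chi(u)/\chi(1)$. It uses $|\chi(g)|\le\chi(1)^{1/4}$ for regular $g$ (Guralnick--Larsen--Tiep) and $\sum_{\chi\ne 1}\chi(1)^{-1/4}\to 0$ as $d\to\infty$ (Liebeck--Shalev). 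This shows that about $|x^G||u^G|/|G|$ elements of $I_G(x)$ are regular unipotent. By pigeonhole, some Sylow $s$-subgroup $U$ has $|I_G(x)\cap U|\gtrsim |U|/q^{2d}$. But an $r$-generated subgroup of $U$ has index larger than $q^{2d}$ once $d\gg r$ (look at $U/\gamma_4(U)$).
\item \emph{Bounded Lie rank, growing field.} The paper uses $|\chi(x)|\le C_1$ for regular semisimple $x$, $\sum_i\chi(u_i)\le C_2$ over the regular unipotent classes, and $\sum_{\chi\ne 1}\chi(1)^{-1}\to 0$. These give $|I_G(x)\cap U|\ge |U|/((d+1)q^d)$, which contradicts the $(r,s,d)$-bounded order of $r$-generated subgroups of $U$.
\end{itemize}

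\textbf{Smaller points that also need repair.}
\begin{itemize}
\item In $A_n$ you never bound $p$ when $x$ is a single $p$-cycle and $n\approx p$. The paper uses $[x,(4k+1\;4k+2\;4k+3)]=(4k+1\;4k+2\;4k+4)$, which gives an elementary abelian $3$-group of rank $\lfloor p/4\rfloor$.
\item A unipotent $x$ of order $p$ need not be a root element.
\item A semisimple $x$ lying in a parabolic but not in a split torus normalizes no root subgroup. This is why the paper works with $Q/\Phi(Q)$ as an $\mathbb{F}_{s^e}\langle x\rangle$-module ($W\otimes M$, $W\otimes W'$) and estimates $\dim C_Q(x)$.
\item Your subfield step for $PSL_2(q)$ is unnecessary, since every $q\equiv-1\pmod p$ is in the exception. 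It is also wrong: an odd $p$ dividing $q_0^2+1$ cannot divide $q_0-1$.
\end{itemize}
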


Earlier we proved in \cite{agks}  `local--global' generation properties similar to Theorems~A and~B for the commutator set $I_G(P)$, where $P$ is a Sylow $p$-subgroup of $G$ (with a stronger result for $p$-soluble groups), and for $I_G(A)$, where $A$ is a group of coprime automorphisms of $G$. We  state these results, which are used in this paper, for convenience of the reader.

\begin{theorem}[{\cite[Theorem~1.2]{agks}}]\label{t-1-2}
Let $p$ be a prime, $r$ a positive integer, $G$ a  finite group, and $P$  a Sylow $p$-subgroup of $G$. Suppose that
\begin{enumerate}
\item[\rm (a)]  any subgroup generated by a subset of $I_G(P)$ can be generated by $r$ elements; and
\item[\rm (b)] for any $x\in I_G(P)$, any subgroup generated by a subset of $I_G(x)$ can be generated by $r$ elements.
\end{enumerate}
Then $[G,P]$ has $r$-bounded rank.
\end{theorem}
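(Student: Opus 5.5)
The statement just above is Theorem~\ref{t-1-2}, quoted from \cite{agks}, so the plan below is how I would reprove it. I would induct on $|G|$ and reduce $[G,P]$ to pieces whose rank is bounded by $r$. A basic observation, used throughout, is that hypotheses (a) and (b) pass to quotients $G/N$. Also $[G,P]$ is normal in $G$, and $[G,P]=[G,P,P]$ up to the usual coprime-style arguments; for Sylow subgroups we have $G=[G,P]N_G(P)$ by the Frattini argument. Since rank is subadditive over normal series, it suffices to bound the rank of $[G,P]$ modulo the soluble radical, and then inside the soluble radical.

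The first step treats the nonsoluble part. Let $R$ be the soluble radical and pass to $\bar G=G/R$. The socle of $\bar G$ is a product of nonabelian simple groups, and I would handle its factors in two stages.

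\begin{itemize}
\item \emph{Number of factors.} Factors not centralized by $\bar P$ lie in $[\bar G,\bar P]$. For these, $p$ divides the order of the factor or $P$ permutes factors nontrivially. In the permuting case one can build many independent commutators, which forces the number of such factors to be $r$-bounded.
\item \emph{Rank of each factor.} Inside a simple factor $S$ there is an element $x\in I_G(P)\cap S$ of order $p$. Hypothesis (b) then says that every subgroup generated by a subset of $I_S(x)$ is $r$-generated. The classification results of the type of Theorems~E and~F bound the rank of $S$, except for the $PSL_2(q)$ exceptions. These exceptions I would kill using (a) and (b) together with a commutator $[g,x]$ of order not $p$, for example a unipotent or regular semisimple element, whose $I(\cdot)$-set generates a subgroup of large rank.
\end{itemize}

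Since $\operatorname{Out}$ of a group of bounded rank has bounded rank, this bounds the rank of $[\bar G,\bar P]$.

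The second step treats the soluble part, where I would reduce to $p$-soluble arguments. Within $R$, take a chief series, i.e.\ elementary abelian chief factors $V$ on which $P$ acts. For a $q$-factor $V$ with $q\neq p$, coprime action gives $V=[V,P]\times C_V(P)$. Here $[V,P]$ is generated by elements of $I(P)$, so it has rank at most $r$ (in fact generated by $r$ elements as a group, hence of rank at most $r$). For $p$-factors I would use the standard argument that $[V,P]$ of small rank, together with the bounded nilpotency length of $P$ acting, bounds the relevant section.

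To assemble these, the number of chief factors meeting $[R,P]$ nontrivially must be controlled. I would use the known theorem that a soluble group with all Sylow subgroups of rank $r$ has rank $r$-bounded, and the Lubotzky–Mann theory of powerful $p$-groups. With these, bounded rank of generated subgroups of $I(P)$ gives bounded rank of Sylow subgroups of $[R,P]$: any $p$-subgroup generated by commutators is $r$-generated, and a $p$-group in which every subgroup generated by a subset of a normal generating set is $r$-generated has bounded rank.

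The main obstacle I expect is this last step, converting the local $r$-generation of subsets of $I_G(P)$ into Sylow rank bounds of $[G,P]$. Commutators $[g,x]$ need not lie in a single Sylow subgroup. My first attempt would be to pass to $q$-subgroups via coprime action, using $[Q,P]$ for $P$-invariant Sylow $q$-subgroups $Q$ of $[R,P]$, which exist by the Frattini argument. For these, $[Q,P]$ is generated by $I_Q(P)$, and bounded rank of every subgroup generated by a subset of $I_Q(P)$ should give bounded rank via a powerful-subgroup argument.
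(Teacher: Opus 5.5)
The paper does not prove this statement. It is quoted from \cite{agks} and used as a black box in Lemma~\ref{112c}. Judged on its own, and against the parallel argument the paper gives for Theorem~\ref{t-main}, your proposal has genuine gaps. The nonsoluble step is roughly on the right track. The soluble part, and the way it is glued to the nonsoluble part, are not actually handled.

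\textbf{The soluble part.} You plan to bound the ranks of chief factors and then control how many of them meet $[R,P]$. That number is not bounded under the hypotheses. Take $G=C_{q^n}\rtimes C_p$ with $C_p$ acting fixed-point-freely (e.g.\ $q=7$, $p=3$). Then $[G,P]=C_{q^n}$ contains $n$ chief factors of $G$, yet (a) and (b) hold with $r=1$, since every relevant commutator lies in the cyclic group $C_{q^n}$. So a rank bound must come from a structural argument, not from additivity over a chief series.

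The tools you propose for this step fail as stated.
\begin{itemize}
\item $P$-invariant Sylow $q$-subgroups of $[R,P]$ need not exist when $p$ divides $|R|$, and the Frattini argument does not produce them. In $G=S_4$ with $p=2$, one has $[G,P]=A_4$, and no Sylow $3$-subgroup of $A_4$ is normalized by $P\cong D_8$.
\item Your claim about $p$-groups with a normal generating set whose subsets all generate $r$-generated subgroups is unproved. In any case it does not apply, because $I_G(P)$ is not contained in a $p$-group.
\item The $p$-chief factors are not treated at all, even though $C_V(P)\ne 1$ always holds there.
\end{itemize}
\cite{agks} treats the $p$-soluble case as a separate theorem. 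It needs a real argument of the kind given here for Theorem~\ref{t-sol}: Lemma~\ref{last}-type control of the $\gamma_j$ via powerful subgroups, and Theorem~\ref{t-1-4} for the coprime pieces.

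\textbf{The gluing.} You reduce to bounding $[G,P]$ modulo $R$ and ``inside $R$'', but then you only study $[R,P]$. These differ. If $G=V\rtimes S$ with $S$ simple acting faithfully and irreducibly on $V$, and $p$ divides $|S|$ but not $|V|$, then $[G,P]\cap R=V$, whereas $[R,P]=[V,P]$ misses $C_V(P)$.

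The missing idea is the one used for Theorem~\ref{t-main}.
\begin{itemize}
\item Put $H=\langle P^G\rangle=[G,P]P$. Since $G=HN_G(P)$, every conjugate of $P$ is an $H$-conjugate, and $[G,P]\le H$.
\item Once the simple factors involved have bounded number and rank, Theorem~\ref{t-hls} gives $H=R(H)\langle P^{g_1},\dots,P^{g_m}\rangle$ with $m$ bounded. This is the analogue of Lemma~\ref{114}.
\item Then $L=\prod_i[R(H),P^{g_i}]$ is normal in $H$ and has bounded rank by the soluble case.
\item Modulo $L$, $R(H)$ centralizes all conjugates of $P$, so it is central in $H$. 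Since $H/R(H)$ has bounded rank, the Lubotzky--Mann theorem bounds the rank of $H'$.
\item Finally, $[G,P]H'/H'$ is abelian and $r$-generated by (a).
\end{itemize}

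\textbf{The nonsoluble step} is fixable but incomplete.
\begin{itemize}
\item Your dichotomy omits factors $S$ normalized by $\bar P$ with $p\nmid|S|$, on which $\bar P$ induces field automorphisms (e.g.\ $PSL_2(2^5)$ with $p=5$). These are handled by Theorem~\ref{t-1-4}.
\item For $\bar P$-invariant factors with $p$ dividing $|S|$, you need an element of order $p$ in $I_G(P)\cap S$. It comes from the $Z^*$- or $Z_p^*$-theorem, and these elements also bound the number of such factors.
\item For the $PSL_2(q)$ exceptions, ``unipotent or regular semisimple'' is the wrong choice. $I_S(x)$ may contain no nontrivial unipotent elements (see the proof of Lemma~\ref{l-6.1} and \cite{gu1}), and elements of a nonsplit torus give nothing.
\item What works is a noncentral split-torus element $z\in I_S(x)$. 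For $p=2$, take $z=[t,x]=t^{-2}$ with $t$ in a split torus inverted by $x$; for odd $p$, use \cite{Gow}. Then $U=[U,z]$ is generated by $I_U(z)$ and has rank $e$, so (b) gives $e\le r$.
\end{itemize}
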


Recall that $\alpha$ is a coprime automorphism of a finite group $G$ if the order of $\alpha$ is coprime to the order of $G$, that is, $(|G|,|\alpha |)=1$. When $A$ is a group acting by automorphisms on a group $G$, the subset $I_G(A)$ and the subgroup $[G,A]$ have  the same meaning as above, in the natural semidirect product $G\rtimes A$.

\begin{theorem}[{\cite[Theorem~1.4]{agks}}] \label{t-1-4} Let $r$ be a positive integer. Suppose that $G$ is a finite group admitting a group of coprime automorphisms $A$ such that any subgroup generated by a subset of $I_G(A)$ can be generated by $r$ elements. Then $[G,A]$ has $r$-bounded rank.
\end{theorem}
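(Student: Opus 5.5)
The plan is to reduce to the case $G=[G,A]$ and then bound the ranks of all Sylow subgroups, using the theorem (Guralnick, Lucchini) that the rank of a finite group is at most the maximum rank of its Sylow subgroups plus one. Since $A$ acts coprimely, $[G,A,A]=[G,A]$. Moreover $I_{[G,A]}(A)\subseteq I_G(A)$, so the hypothesis passes to $[G,A]$, and we may assume $G=[G,A]$. The hypothesis is also inherited by $A$-invariant subgroups and by quotients by $A$-invariant normal subgroups. For quotients we use the standard coprime fact that for $N$ normal and $A$-invariant, $I_{G/N}(A)$ is the image of $I_G(A)$: if $[gN,a]\in N$-coset form holds, lifts can be chosen inside $I_G(A)$.

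The basic case is an abelian $A$-invariant section $V$. Here $V=[V,A]\times C_V(A)$, and $[V,A]$ consists of products of elements of $I_V(A)$. The key observation is that for abelian $V$ and cyclic $\langle a\rangle$ the set $I_V(a)=\{[v,a]\}$ is the subgroup $[V,a]$ itself. Hence any subgroup of $[V,a]$ is generated by a subset of $I_V(A)$ and is therefore $r$-generated, so $[V,a]$ has rank at most $r$. To pass from single elements $a$ to all of $A$, I would use that $[V,A]$ is generated by the $[V,a]$ for $a$ ranging over a bounded set. For this I would first bound $|A/C_A(V)|$ via the faithful action on the Frattini quotient, which has rank at most $r$ and so gives a subgroup of $GL_r(p)$ of $p'$-order. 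A cleaner alternative is to choose a minimal set of $a$'s, which should suffice because $[V,A]$ is itself $r$-generated. This should give $r$-bounded rank for $[V,A]$.

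For a general $G=[G,A]$, fix a prime $p$ and an $A$-invariant Sylow $p$-subgroup $P$ (which exists by coprimality). The difficulty is that $P$ need not equal $[P,A]$. I would instead bound the ranks of the $A$-invariant chief factors of $G$ that are $p$-groups, via the abelian case applied to $[V,A]$ together with $V=[V,A]$ for non-central factors. Then bound the number of factors in which a Sylow subgroup can have large rank, using the Lubotzky--Mann theory of powerful $p$-groups: a $p$-group in which every $A$-invariant elementary abelian section generated by commutators has bounded rank should have bounded rank. In the soluble case this argument goes through the Fitting series, whose length is $r$-bounded by a Hall--Higman type argument.

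The non-soluble part is where I expect the main obstacle to lie, and it requires the classification. Modulo the soluble radical, $G$ is built from $A$-invariant products of simple groups. A coprime automorphism of a simple group $S$ is, up to conjugation, a field automorphism, so $C_S(a)$ is a subfield group $S(q_0)$. One must show that if $[S,a]$, which is all of $S$, contains no large sets generated by commutators needing many generators, then the Lie rank and the field degree of $S$ are bounded. I would try to exhibit large elementary abelian subgroups inside root subgroups $U\cap[U,a]$: for instance, elements $[u,a]$ with $u$ in a root group $\cong\mathbb F_q$ form $\{u^{\sigma}-u\}$, an additive subgroup of index $q_0$. This gives elementary abelian subgroups of rank about $\log_p q-\log_p q_0$, and many commuting root groups multiply this. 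Bounding the number of components permuted by $A$ would follow similarly, by taking commutators across orbits.
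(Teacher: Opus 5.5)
The paper only quotes this result from \cite{agks}, so there is no in-paper proof to compare with. Judged on its own, your outline has a genuine gap in its core step.

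\textbf{What works, and the false step.} Your preliminary reductions are sound: passing to $[G,A]$, and inheritance of the hypothesis by $A$-invariant subgroups and quotients. The abelian case is even easier than you make it: $[V,A]=\langle I_V(A)\rangle$ is an $r$-generated abelian group, so it has rank at most $r$. The root-subgroup computation with $\{t^{\sigma}-t\}$ is a reasonable ingredient for simple groups. The problem is the step meant to carry the soluble part. For an $A$-invariant chief factor $V$ of $G$ on which $G$ acts nontrivially, one only has $V=[V,A]\times C_V(A)$, not $V=[V,A]$. For example, take $p\equiv 1\pmod 3$. Let $H=\langle h\rangle\cong C_3$ act on $V=\mathbb{F}_p^2$ via $\operatorname{diag}(\lambda,\lambda^{-1})$ with $\lambda$ of order $3$, and let $a$ of order $2$ swap the coordinates and invert $h$. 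Then $G=V\rtimes H$ has odd order and $G=[G,a]$. Moreover $V$ is a minimal normal subgroup of $G\langle a\rangle$, yet $[V,a]$ is one-dimensional. So a bound for $[V,A]$ does not by itself bound $V$. You need an argument that exploits the action of $G=[G,A]$ on $V$ together with the hypothesis, for instance on subgroups generated by elements $[vg,a]=[v,a]^g[g,a]$. Your outline contains nothing of this kind.

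\textbf{Further gaps.} Even bounded chief factors would not suffice. They are compatible with unbounded rank: take $C_p^n$ with $A$ acting by scalars, where every chief factor of $GA$ has order $p$. So the hypothesis has to be used globally. Your claim that a $p$-group whose $A$-invariant elementary abelian sections generated by commutators have bounded rank ``should'' have bounded rank is essentially the $p$-group case of the theorem itself. It is not a consequence of Lubotzky--Mann. The device of Lemma~\ref{last} makes the terms $[N,P]$ of a descending series boundedly generated and then invokes powerfulness. It does not transfer directly here: for a $p'$-group $A$ with $P=[P,A]$ one has $[P,PA]=P$, so the analogous series never descends. Likewise, Hall--Higman/Thompson-type bounds on Fitting height under coprime action are in terms of $|A|$, which is arbitrary here, and they usually assume $C_G(A)=1$. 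An $r$-bounded Fitting height would need normal sections of bounded rank feeding Zassenhaus's theorem, as in Lemma~\ref{l-fh}, and that presupposes the missing step above. Finally, the nonsoluble reduction is only sketched. Components normalized and even centralized by $A$ can occur when $G=[G,A]$, and your orbit argument does not cover them. You must also bound the index of the socle in $G/R(G)$ and then control $R(G)$, which need not equal $[R(G),A]$; compare how the proof of Theorem~\ref{t-main} deals with the radical.
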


Theorem~\ref{t-1-4} and the other results in \cite{agks}, as well as in the earlier paper by C.~Acciarri, R.~Guralnick, and P.~Shumyatsky \cite{tams}, can be regarded as a dual direction to the study of groups with small centralizers of elements or automorphisms. Indeed, the  set $I_G(H)$ is in a sense dual to the set of fixed points $C_G(H)$. For example, for a given element (or automorphism) $h$ we have $|I_G(h)|=|G:C_G(h)|$.

\begin{remark}\label{r-rem}
The result of Theorem~A for soluble groups was generalized in \cite[Corollary~C]{agks243} to the case of any $\pi$-subgroup in a $\pi$-soluble finite group. We included the proof of  Theorem~A in this paper for completeness, as this theorem is used in the proof of Theorem~B.
\end{remark}

\section{Preliminaries}

All groups considered in this paper are finite. By simple groups we always mean finite  non-abelian simple groups; we use their classification throughout.

\subsection*{Carter subgroups} First we recall the definition.

\begin{definition*}
  A \emph{Carter subgroup} $C$ of a finite group $G$ is a nilpotent subgroup that coincides with its normalizer: $C=N_G(C)$.
\end{definition*}

Clearly, if a group $G$ has a Carter subgroup $C$, then $C$ is a Carter subgroup of any subgroup of $G$ containing $C$. The following theorem was proved by R.\,W.~Carter \cite{car} for finite soluble groups and extended to arbitrary finite groups by E.\,P.~Vdovin \cite{vdo1, vdo2,vdo3,vdo4}. In what follows, we shall refer to Vdovin's results using his comprehensive paper  \cite{vdo4} including a corrigendum to the tables describing the Carter subgroups of almost simple groups.

\begin{theorem}[{\cite{car}, \cite{vdo1, vdo2,vdo3,vdo4}}]\label{t-cart}
Every soluble finite group has a Carter subgroup. If a finite group~$G$ has a Carter subgroup $C$, then all Carter subgroups of $G$ are conjugate and the image of $C$ in any quotient group $G/N$ is a Carter subgroup of $G/N$.
\end{theorem}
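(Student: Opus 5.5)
This statement is a combination of known results, so I will not reprove the deep parts (Vdovin's conjugacy theorem for non-soluble groups). Instead I will organize the argument by recalling Carter's proof for soluble groups and reducing the quotient claim to a Frattini-type argument.

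\emph{Existence for soluble $G$.} I argue by induction on $|G|$. Let $N$ be a minimal normal subgroup; it is an elementary abelian $p$-group. By induction $G/N$ has a Carter subgroup $K/N$. Two cases arise.

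If $K<G$, then by induction $K$ has a Carter subgroup $C$. Since $K/N$ is nilpotent, $CN=K$: this uses that a Carter subgroup of $K$ covers every nilpotent quotient, because $C$ is self-normalizing in $K$. Then $N_G(C)$ normalizes $CN=K$, so $N_G(C)\le N_G(K)=K$, and hence $N_G(C)=N_K(C)=C$.

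If $K=G$, then $G/N$ is nilpotent. Take a Hall $p'$-subgroup $H$, a Sylow $p$-subgroup $P$, and set $C=N_G(H)$. A Frattini argument, using the fact that the images of $H$ in $G/N$ are normal, gives $G=N\,N_G(H)$. One then checks that $C=H\times C_P(H)$ is nilpotent. Finally, $C$ is self-normalizing because $H$ is characteristic in $C$ as its $p'$-Hall subgroup.

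\emph{Conjugacy for soluble $G$.} Again I use induction via a minimal normal $N$. The images $C_1N/N$ and $C_2N/N$ of two Carter subgroups are Carter subgroups of $G/N$ (this is the quotient property below), so they are conjugate, and we may assume $C_1N=C_2N=L$. If $L<G$, the induction applies inside $L$. If $L=G$, the $p'$-Hall subgroups $H_i$ of $C_i$ are Hall $p'$-subgroups of $G$. These are conjugate by Hall's theorem, and $C_i=N_G(H_i)$, which gives conjugacy.

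\emph{Quotients.} Let $C$ be a Carter subgroup of $G$ and $N\trianglelefteq G$. The image $CN/N$ is nilpotent. Suppose $gN$ normalizes $CN/N$, so $g\in N_G(CN)$. Then $C$ and $C^g$ are Carter subgroups of $CN$. Granting conjugacy in $CN$, there is $n\in CN$ with $C^{gn}=C$, so $gn\in C$ and $g\in CN$. The key point is therefore that conjugacy in the subgroup $CN$ yields self-normalization of the image. In the soluble case this conjugacy was proved above, and the circularity is resolved by running a joint induction on $|G|$.

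\emph{General finite groups.} The main obstacle is conjugacy when $G$ is not soluble. Here I invoke Vdovin's work \cite{vdo4}. That argument reduces, through a minimal counterexample and the quotient argument above, to almost simple groups. For those it uses the CFSG-based classification of their Carter subgroups, checking conjugacy case by case. I would not attempt to redo this step. Once conjugacy holds in all subgroups of $G$ that contain a Carter subgroup, the quotient statement follows exactly as in the preceding paragraph.
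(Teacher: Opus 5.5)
Your proposal is correct and consistent with the paper, which gives no proof of this theorem and simply cites Carter for the soluble case and Vdovin for the general case. Your sketch of Carter's induction is sound, and so is your derivation of the quotient property from conjugacy of Carter subgroups inside $CN$. One step needs a better justification: in the existence step, $CN=K$ is not a consequence of self-normalization alone but of the quotient property for $K$ (that is, conjugacy in the proper subgroup $CN$), and your joint induction on $|G|$ supplies exactly that.
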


It is easy to deduce the following consequences.

\begin{corollary}\label{c-cart}
If a finite group $G$ has a Carter subgroup $C$, then $G$ is the normal closure of $C$ and $G'=[G,C]$. \end{corollary}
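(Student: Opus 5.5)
Both claims follow from Theorem~\ref{t-cart} applied to suitable quotients, together with the observation that a nilpotent group equal to its own normalizer must be the whole group.

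\emph{Step 1: $G$ is the normal closure of $C$.} Let $N=\langle C^G\rangle$ be the normal closure of $C$ in $G$. By Theorem~\ref{t-cart}, the image $CN/N$ is a Carter subgroup of $G/N$. Since $C\le N$, this image is trivial. A trivial subgroup equals its own normalizer only in the trivial group, so $G/N=1$ and $G=N$.

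\emph{Step 2: $G'=[G,C]$.} The subgroup $[G,C]$ is normal in $G$; this is the standard fact that $[G,H]$ is normalized by both $G$ and $H$. Clearly $[G,C]\le G'$. For the reverse inclusion, let $M=[G,C]$ and pass to $\bar G=G/M$. In $\bar G$ the image $\bar C$ commutes with every element, so $\bar C\le Z(\bar G)$. By Theorem~\ref{t-cart}, $\bar C$ is a Carter subgroup of $\bar G$. Since $\bar C$ is central, it is normal in $\bar G$, so $\bar G=N_{\bar G}(\bar C)=\bar C$. Therefore $\bar G$ is abelian (indeed nilpotent and central), hence $G'\le M$ and $G'=[G,C]$.

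Alternatively, Step 2 follows from Step 1: $\bar C$ is central, hence normal, and $\bar G$ is the normal closure of $\bar C$, so $\bar G=\bar C$ is abelian.

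There is no real obstacle. The only point needing care is invoking the quotient property of Carter subgroups from Theorem~\ref{t-cart}, which for nonsoluble groups relies on Vdovin's work, and checking that $[G,C]$ is normal in $G$.
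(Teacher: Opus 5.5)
Your proof is correct and follows the paper's argument: in both steps you pass to the quotient by $\langle C^G\rangle$ and by $[G,C]$ respectively, and use that the image of $C$ there is a self-normalizing Carter subgroup. You additionally make explicit two points the paper leaves implicit, namely the normality of $[G,C]$ and the fact that a central self-normalizing subgroup must be the whole quotient.
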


\begin{proof}
The image of $C$ in the quotient $G/\langle C^G\rangle$ is trivial and is a Carter subgroup of this quotient; so it is self-normalizing meaning that   $G/\langle C^G\rangle=1$. The image of $C$ in the quotient group $G/[G,C]$ is a Carter subgroup of it, which is central in $G/[G,C]$. This means that $G/[G,C]$ is abelian, and so $[G,C]=G'$.
\end{proof}

We also mention an elementary lemma showing that the `local' $r$-generation condition on $I_G(C)$ for a Carter subgroup $C$ is inherited by subgroups and quotient groups; we shall use these facts without special references.

\begin{lemma}\label{l-inher}
Let $r$ be a positive integer, and $G$ a finite group containing a Carter subgroup~$C$. Suppose that any subgroup generated by a subset of $I_G(C)$ can be generated by $r$ elements. Then
\begin{itemize}
  \item[\rm (a)] for any homomorphic image $\bar G$ of $G$ and any subset $C_1\subseteq C$, any subgroup generated by a subset of $I_{\bar G}(\bar C_1)$ can be generated by $r$ elements;
  \item[\rm (b)] for any subgroup $H\leq G$ and any subset $C_1\subseteq C$,  any subgroup generated by a subset of $I_{H}(C_1)$ can be generated by $r$ elements.
\end{itemize}
\end{lemma}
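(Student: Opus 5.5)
The plan is to reduce both parts to set inclusions. If the subsets of $I_{\bar G}(\bar C_1)$, respectively $I_H(C_1)$, in question are images of subsets of $I_G(C)$, or are themselves subsets of $I_G(C)$, then the $r$-generation hypothesis transfers directly. Note that the hypothesis on $G$ is really a property of the set $I_G(C)$: every subgroup generated by a subset of it is $r$-generated. So it suffices to check that the relevant commutator sets are contained in, or are images of, subsets of $I_G(C)$.

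For (a), let $\bar G=G/N$ and write bars for images. An element of $I_{\bar G}(\bar C_1)$ has the form $[\bar g,\bar x]$ with $g\in G$ and $x\in C_1$. Since the canonical map is a homomorphism, $[\bar g,\bar x]=\overline{[g,x]}$, and $[g,x]\in I_G(C_1)\subseteq I_G(C)$. Now let $T\subseteq I_{\bar G}(\bar C_1)$. For each $t\in T$ choose a preimage $\tilde t\in I_G(C)$ of the form above, and set $\tilde T=\{\tilde t\}$. By hypothesis $\langle \tilde T\rangle$ is generated by some elements $y_1,\dots,y_r$. Its image is $\langle T\rangle$, which is therefore generated by $\bar y_1,\dots,\bar y_r$. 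Only surjectivity of $G\to\bar G$ and $C_1\subseteq C$ are used here. We do not need that $\bar C$ is a Carter subgroup of $\bar G$, although this holds by Theorem~\ref{t-cart}.

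For (b), let $H\le G$ and $C_1\subseteq C$. Here $I_H(C_1)$ consists of the commutators $[h,x]$ with $h\in H$ and $x\in C_1$, and each of these lies in $I_G(C)$. Note that $C_1$ need not lie in $H$; we only use that the commutators are formed in $G$, which matches the paper's convention for $I_G(S)$ with $S$ an arbitrary subset. So any subset of $I_H(C_1)$ is a subset of $I_G(C)$, and the subgroup it generates is $r$-generated by hypothesis.

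There is no genuine obstacle. The only point needing care is in (a): a subgroup generated by a subset of the image must be realized as the image of a subgroup generated by a subset of $I_G(C)$. This is why preimages are chosen inside $I_G(C)$ itself rather than as arbitrary preimages, and why we use that homomorphic images of $r$-generated groups are $r$-generated.
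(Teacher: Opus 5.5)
Your proof is correct. The paper gives no proof of this lemma, calling it elementary, and your argument is the routine verification it relies on: for (a), each element of $I_{\bar G}(\bar C_1)$ lifts to an element of $I_G(C)$, so every subgroup generated by a subset of $I_{\bar G}(\bar C_1)$ is the image of an $r$-generated subgroup; for (b), $I_H(C_1)\subseteq I_G(C)$.
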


\subsection*{Automorphisms}
The following lemma about coprime actions  belongs to folklore (see, for example, \cite[I.18.6]{hup}); we shall sometimes use this fact without special references.

\begin{lemma}\label{l-1} Let $A$ be a group acting by automorphisms on a group $G$.  If $N$ is a normal $A$-invariant subgroup of $G$ of order coprime to $|A|$, then $C_{G/N}(A)=C_G(A)N/N$.
\end{lemma}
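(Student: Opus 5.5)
The plan is to prove the lemma directly, without any coprime-action machinery beyond Lagrange's theorem and a counting argument. Clearly $C_G(A)N/N\subseteq C_{G/N}(A)$, since an element fixed by $A$ has an image fixed by $A$. For the reverse inclusion, take a coset $gN$ fixed by $A$, so that $g^a\in gN$ for all $a\in A$. The goal is to find an element of $gN$ that is actually fixed by $A$.

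First we set up an action. Consider the subgroup $H=\langle g\rangle N$, or more conveniently the coset $gN$ itself, which is $A$-invariant. Form the semidirect product $N\rtimes A$ and let it act on the set $gN$: $N$ acts by right multiplication, so that $x\mapsto xn$, and $A$ acts by automorphisms, so that $x\mapsto x^a$. Next we check that this is a genuine action of $NA$. The key identity is $(xn)^a=x^a n^a$, which is exactly the semidirect product relation. Under this action $N$ acts regularly on $gN$.

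We then reduce to a complement statement. The stabilizer in $NA$ of a point $x\in gN$ has order $|NA|/|gN|=|A|$, since the action is transitive because $N$ already is. This stabilizer meets $N$ trivially, because $N$ acts regularly, so it is a complement to $N$ in $NA$. By the Schur--Zassenhaus theorem, using $(|N|,|A|)=1$, all complements to $N$ in $NA$ are conjugate to $A$. Hence $A$ itself is the stabilizer of some point $x\in gN$, so $x^a=x$ for all $a\in A$. Then $x\in C_G(A)$ and $gN=xN$, which gives $C_{G/N}(A)=C_G(A)N/N$.

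The main obstacle is the conjugacy part of Schur--Zassenhaus. When neither $N$ nor $A$ is soluble, this step relies on the Feit--Thompson theorem, since one of them has odd order. If one wants to avoid that, one can cite \cite[I.18.6]{hup} directly, as the paper does, or simply note that the classification is assumed throughout anyway. Everything else in the argument is routine verification.
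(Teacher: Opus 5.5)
Your argument is correct and is essentially the standard Schur--Zassenhaus proof behind \cite[I.18.6]{hup}, which the paper cites instead of giving a proof: the stabilizer of the point $g$ under your action of $N\rtimes A$ on $gN$ is exactly $A^g$ inside $G\rtimes A$, so you are applying conjugacy of the complements $A$ and $A^g$ of $N$ in $NA$ to find an element of $gN$ that normalizes, hence centralizes, $A$. Only the framing is off: the proof does use coprime-action machinery, since Schur--Zassenhaus conjugacy is the key step, and Feit--Thompson is what guarantees that one of $N$, $A$ is soluble rather than covering a case in which neither is.
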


The next elementary lemma must also be well-known.

\begin{lemma}\label{l-3} Let $A$ be a nilpotent group acting by automorphisms on a nilpotent group $G$.  If $C_{G}(A)=1$, then $[G,A]=G$, and furthermore,  $G_p=[G_p,A_{p'}]$ for every prime $p$, where  $G_p$ is the Sylow $p$-subgroup of $G$, and $A_{p'}$ the Hall $p'$-subgroup of $A$.
\end{lemma}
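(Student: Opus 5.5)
We reduce to the case of an abelian group $G$ first and then pass to the Sylow subgroups. The first claim, $[G,A]=G$, is the standard fact that for a nilpotent group $A$ acting on a nilpotent group $G$ with $C_G(A)=1$ we have $G=[G,A]$. To prove it, write $H=[G,A]$. This subgroup is normal in $G$, because $[G,A]$ is normalized by both $G$ and $A$ in the semidirect product $G\rtimes A$. If $H\ne G$, then the quotient $G/H$ is a nontrivial nilpotent group on which $A$ acts trivially. We then want to show that $C_G(A)\ne 1$, which is a contradiction.

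Since $G$ is nilpotent, $G$ is the direct product of its Sylow subgroups $G_p$, each of which is characteristic and hence $A$-invariant. Also $C_G(A)=\prod_p C_{G_p}(A)$, and $[G,A]=\prod_p[G_p,A]$. So it suffices to work with a single $p$-group $G_p$, and both statements reduce to showing $G_p=[G_p,A_{p'}]$ whenever $C_{G_p}(A)=1$. Write $A=A_p\times A_{p'}$, using that $A$ is nilpotent. Since $A_{p'}$ acts coprimely on $G_p$, we have the standard decomposition $G_p=[G_p,A_{p'}]\,C_{G_p}(A_{p'})$ (see \cite{hup}). Set $K=C_{G_p}(A_{p'})$. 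Because $A_p$ commutes with $A_{p'}$, the subgroup $K$ is $A_p$-invariant. If $K\ne 1$, then the $p$-group $A_p$ acting on the nontrivial $p$-group $K$ has a nontrivial fixed point: consider $K\rtimes A_p$, a $p$-group, in which the normal subgroup $K$ meets the center nontrivially. Such a fixed point is centralized by both $A_p$ and $A_{p'}$, hence lies in $C_{G_p}(A)=1$, which is a contradiction. Therefore $K=1$, and $G_p=[G_p,A_{p'}]$.

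Finally, $[G,A]\geq\prod_p[G_p,A_{p'}]=\prod_p G_p=G$, which gives $[G,A]=G$. The only nonroutine ingredients are the coprime-action identity $G_p=[G_p,A_{p'}]C_{G_p}(A_{p'})$ and the fixed-point property of $p$-groups acting on $p$-groups. The main point requiring care is checking that $C_{G_p}(A_{p'})$ is $A_p$-invariant, which uses the fact that $A_p$ and $A_{p'}$ commute in the nilpotent group $A$.
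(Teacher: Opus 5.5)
Your proof is correct and follows essentially the same approach as the paper. The paper also shows $C_{G_p}(A_{p'})=1$, since otherwise $A_p$ would have a nontrivial fixed point on this $A_p$-invariant $p$-group, and then obtains $G_p=[G_p,A_{p'}]$ from the coprime-action fact (its Lemma~\ref{l-1}, which is equivalent to your decomposition $G_p=[G_p,A_{p'}]C_{G_p}(A_{p'})$). Your opening remarks about reducing to abelian $G$ and about the quotient $G/[G,A]$ are never used and can simply be deleted.
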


\begin{proof}
  Let $A=A_p\times A_{p'}$, where $A_p$ is a Sylow $p$-subgroup of $A$. Then $C_{G_p}(A_{p'})=1$, because otherwise $A_p$ would have nontrivial fixed points on $C_{G_p}(A_{p'})$, which would belong to $C_{G}(A)$. We now have $G_p=[G_p,A_{p'}]$ by Lemma~\ref{l-1}.
\end{proof}

The following result of V.\,V.~Belyaev and B.~Hartley \cite{beha} is
a consequence of the classification of finite simple groups.

\begin{theorem}[{\cite[Theorem 0.11]{beha}}] \label{t-beha} If a finite group  $G$ admits a nilpotent group of automorphisms $A$ such that $C_G(A)=1$, then $G$ is soluble.
\end{theorem}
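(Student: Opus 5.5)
The plan is to take a minimal counterexample and reduce to the case where $G$ is a nonabelian simple group, so that the classification can be applied. Since this result is quoted from \cite{beha}, this is only an outline of how one would reprove it; I have not checked it in detail. Let $(G,A)$ be a counterexample with $|G|+|A|$ minimal. We may assume $A$ acts faithfully, so $A\le \operatorname{Aut}G$. Any proper $A$-invariant subgroup $H<G$ satisfies $C_H(A)=1$ and is therefore soluble by minimality. The aim is to show that $G$ is characteristically simple and then to rule this case out.

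\emph{Step 1: passage to quotients.} I want the following. If $N$ is an $A$-invariant normal subgroup of $G$, then $C_{G/N}(A)=1$. For a single automorphism $\varphi$ with $C_G(\varphi)=1$ this is standard: the map $g\mapsto g^{-1}g^{\varphi}$ is injective on $G$, hence bijective, and it remains surjective on the cosets of $N$, hence injective there. For nilpotent $A$ I would argue by induction on $|A|$, peeling off a central cyclic factor of $A$ and applying Lemma~\ref{l-1} to the coprime part. I am not certain this goes through in the non-coprime situation. If it does not, I would use a weaker statement: minimality applied to a minimal $A$-invariant normal subgroup $N$ of $G$ still forces $N$ to be nonsoluble. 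Either way, the outcome should be the following. Take $N$ a minimal $A$-invariant normal subgroup of $G$. If $N$ were soluble, then $G/N$ would be a smaller counterexample, so $N$ is nonsoluble. If $N<G$, then $N$ is soluble by minimality, which contradicts the previous sentence. Hence $G=N\cong S^k$ for a nonabelian simple group $S$, and $A$ permutes the factors $S_1,\dots,S_k$ transitively.

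\emph{Step 2: reduction to $k=1$.} Let $B=N_A(S_1)$. The plan is to show $C_{S_1}(B)=1$, and then $B$ acts on the simple group $S_1$ as a nilpotent group without nontrivial fixed points. To see this, take $1\ne s\in C_{S_1}(B)$ and a transversal $t_1,\dots,t_k$ of $B$ in $A$. Then the element $\prod_i s^{t_i}$, whose factors lie in distinct direct factors, is a nontrivial element of $C_G(A)$, a contradiction. Thus it suffices to prove that no nonabelian simple group $S$ admits a nilpotent group $A\le\operatorname{Aut}S$ with $C_S(A)=1$.

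\emph{Step 3: the simple case, which is the main obstacle.} Write $A=\prod_p A_p$ as a product of its Sylow subgroups. Since $\operatorname{Out}S$ is soluble (Schreier conjecture, via the classification), $A\cap\operatorname{Inn}S$ is a nilpotent normal subgroup of $A$ with soluble quotient. I would first try the following argument. Choose a prime $p$ with $A_p\ne1$, and let $A_{p'}$ act on $C_S(A_p)$. Then $C_S(A_p)$ is a proper $A$-invariant subgroup, hence soluble by minimality, and $A_{p'}$ acts on it fixed-point-freely. The aim is to derive a contradiction from known structure of centralizers of $p$-groups of automorphisms; the case $p\nmid|S|$ is handled by Lemma~\ref{l-1} together with Glauberman-type arguments. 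The genuinely hard part is a case-by-case check using the classification, going through alternating, sporadic and Lie-type groups (using the inner, diagonal, field and graph decomposition of automorphisms). In each case one exhibits a nontrivial fixed point of $A$. Typically this is done by showing that $A$ normalizes a Borel subgroup, or a Sylow subgroup, or a $\sigma$-stable maximal torus, on which a nilpotent group must have fixed points; alternatively, one can appeal to Lang--Steinberg to find an $A$-stable structure with nontrivial centralizer. I expect this final verification to be the bulk of the work.
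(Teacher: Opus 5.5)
The paper does not prove this statement. It is quoted from Belyaev--Hartley \cite{beha}, and the classification enters there exactly in the simple case that you postpone. Your reduction is the standard one, and Step~2 is correct. Step~1, however, has a hole that you should close rather than bypass. Your fallback ``weaker statement'' is circular: nothing forces a minimal $A$-invariant normal subgroup $N$ to be nonsoluble unless you already know $C_{G/N}(A)=1$. The fix is short.
A soluble such $N$ is an elementary abelian $p$-group; write $A=A_p\times A_{p'}$. Then $C_N(A_{p'})$ is an $A_p$-invariant $p$-group on which $A_p$ has only trivial fixed points, since these lie in $C_G(A)=1$. Hence $C_N(A_{p'})=1$. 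Lemma~\ref{l-1}, applied to the coprime action of $A_{p'}$, gives $C_{G/N}(A_{p'})=C_G(A_{p'})N/N$. This is $A_p$-isomorphic to $C_G(A_{p'})$ because $C_G(A_{p'})\cap N=1$. So $C_{G/N}(A)$ is the image of $C_{C_G(A_{p'})}(A_p)=C_G(A)=1$.

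The genuine gap is Step~3. It is the entire content of the theorem, and you only describe it. Start with two easy facts you missed:
\begin{itemize}
\item If $A\cap\operatorname{Inn}S\ne 1$, this normal subgroup of the nilpotent group $A$ meets $Z(A)$ nontrivially. An inner automorphism $\iota_s\in Z(A)$ then gives $1\ne s\in C_S(A)$, because $Z(S)=1$.
\item For every prime $p$, the $A_p$-invariant subgroup $C_S(A_{p'})$ must be a $p'$-group. Otherwise $A_p$ normalizes one of its Sylow $p$-subgroups and fixes a nontrivial element there.
\end{itemize}
So $A$ embeds in $\operatorname{Out}S$, and if $A$ is a $p$-group its action is coprime.

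Two cases remain:
\begin{itemize}
\item[(i)] Coprime $p$-groups. These need the classification-based fact that coprime automorphisms of simple groups are, up to conjugacy, field automorphisms of groups of Lie type, whose centralizers contain a subfield subgroup. Lemma~\ref{l-1} and Glauberman-type arguments produce no fixed points here.
\item[(ii)] Nilpotent groups of mixed order built from diagonal, field and graph automorphisms.
\end{itemize}
Your proposed mechanism does not settle (ii). An $A$-invariant Borel subgroup or maximal torus gives no fixed points by itself, because nilpotent groups act fixed-point-freely on soluble groups all the time; that is exactly why the conclusion is only solubility. Only a $p$-group acting on a $p$-group is guaranteed fixed points. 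You must therefore play $A_p$ against $A_{p'}$ using explicit information on centralizers of outer automorphisms of groups of Lie type. Until that is done, you have a reduction to the cited result, not a proof of it.
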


\subsection*{Rank} Recall that the (Pr\"ufer) rank of a finite group $G$ is the least positive integer $r$ such that every subgroup of $G$ can be generated by $r$ elements. The rank of an abelian, or more generally, nilpotent group is the maximum of the ranks of its Sylow subgroups.  A similar property is enjoyed by any finite group.

\begin{lemma}[{\cite{kov, gu2, lo-ma, luc}}]\label{l-rank-syl}
The rank of a finite group is at most $m+1$, where $m$ is the maximum of the ranks of its Sylow subgroups.
\end{lemma}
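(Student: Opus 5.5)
Since this lemma is quoted from the literature, I would reconstruct the argument in the form given by Guralnick and Lucchini. First, observe that the Sylow subgroups of any subgroup $H\leq G$ lie in Sylow subgroups of $G$, so they have rank at most $m$. Hence it suffices to prove the generation statement
$$d(G)\leq \max_p d(P_p)+1\quad\text{for every finite group }G,$$
where $P_p$ runs over the Sylow subgroups of $G$ and $d$ denotes the minimal number of generators. Applying this to each subgroup $H$ then gives the rank bound $m+1$. I will prove the displayed inequality by induction on $|G|$. Set $k=\max_p d(P_p)$. For any normal subgroup $N$, the Sylow subgroups of $G/N$ are images of those of $G$, so $d(G/N)\leq k+1$ by induction. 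The task is to lift a generating set across a minimal normal subgroup $N$.

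\emph{Abelian case.} Here $N$ is an elementary abelian $p$-group. If $N\leq\Phi(G)$, any lift of a generating set of $G/N$ generates $G$, and we are done. Otherwise $N$ has a complement $K$. I would choose generators of $G$ so that one of them can be moved inside a Sylow $p$-subgroup containing $N$. Concretely, take $P\in\operatorname{Syl}_p(G)$. Then $\langle P, x_1,\dots\rangle$ covers $N$ once the images generate $G/N$, because $N\leq P$. Then replace generators of $P$ (at most $k$ of them), together with one extra element generating a $p'$-part, by an argument à la Gaschütz counting lifts. An equivalent route is the standard reduction that $d(G)=\max\bigl(d(G/N),\ \text{a bound from }\dim H^1\text{ and the multiplicity of }N\bigr)$, using the fact that $G$ is generated by $P$ together with $p'$-elements. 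The key point is that $G=\langle P,g\rangle$-type statements hold modulo $O^{p}$-parts, since $G/O^p(G)$ is a $p$-group generated by $k$ elements.

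\emph{Nonabelian case, the main obstacle.} Here $N=S_1\times\dots\times S_t$ with $S_i\cong S$ simple, and this is where the classification enters. The fact I would invoke is Guralnick's theorem: for every finite nonabelian simple group $S$ and every prime $p$ dividing $|S|$ (in particular $p=2$), $S$ is generated by a Sylow $p$-subgroup together with one further element. I would combine this with a Gaschütz-type lifting lemma: if $G/N$ is generated by $x_1,\dots,x_{k+1}$, then the lifts $x_in_i$ can be chosen to generate $G$ provided $G=\langle P,N\text{-adjusted elements}\rangle$ is achievable. One chooses $P$ a Sylow $2$-subgroup of $G$, so that $P\cap N$ projects onto Sylow $2$-subgroups of each $S_i$. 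Then one uses the extra $(k+1)$-st generator, adjusted by an element of $N$, to supply the missing element in each factor simultaneously, the factors being permuted transitively by $G$. Controlling this simultaneous choice across the $t$ conjugate factors, while keeping the first $k$ generators inside a Sylow subgroup, is the delicate step. I would first try to follow Lucchini's counting argument, which bounds the proportion of bad lifts below~$1$.
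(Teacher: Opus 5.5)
Your reduction from rank to the inequality $d(G)\le \max_p d(G_p)+1$, and the induction through a minimal normal subgroup $N$, are sound. The paper gives no proof of this lemma; it simply cites Kov\'acs, Guralnick, Longobardi--Maj and Lucchini. The problem is that neither case of the lifting step is actually carried out.

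In the abelian case, the phrases ``Gasch\"utz counting'' and ``$\dim H^1$'' do not supply the estimate that does the work, and the remark that $G/O^p(G)$ is $k$-generated is beside the point. What is needed is the following. Write $G=N\rtimes K$ and fix a generating $(k+1)$-tuple of $K\cong G/N$. A lift of it fails to generate $G$ exactly when it generates a complement to $N$, because $H\cap N$ is normalized both by $H$ and by the abelian group $N$. Each complement contains exactly one lift, so there are $|Z^1(K,N)|\le |H^1(K,N)|\,|N|$ bad lifts among $|N|^{k+1}$. It then remains to show $|H^1(K,N)|<|N|^{k}$:
restriction to a Sylow $p$-subgroup $R$ of $K$ is injective on $H^1$, since $|K:R|$ is prime to $p$; moreover $|Z^1(R,N)|\le |N|^{d(R)}$ with $d(R)\le k$; and either $R$ acts nontrivially on $N$, so $|B^1(R,N)|\ge p$, or $NR=N\times R$ is a Sylow $p$-subgroup of $G$, which forces $d(R)\le k-\dim N$.

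The nonabelian case is the real gap. You stop at what you call the delicate step, and the strategy you outline does not fit your own induction. The inductive hypothesis gives an arbitrary generating $(k+1)$-tuple $g_1N,\dots,g_{k+1}N$ of $G/N$, and lifting only multiplies each $g_i$ by an element of $N$. So there is no way to arrange that $k$ of the lifts lie in a Sylow subgroup $P$.

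The fact that $S=\langle P_S,x\rangle$ only produces some $z\in N$ with $N\le \langle P,z\rangle$. That yields $G=\langle P,z,g_1,\dots,g_{k+1}\rangle$, which is roughly $2k+2$ generators, not $k+1$. To finish, you must show that the lifts of a fixed generating $(k+1)$-tuple of $G/N$ lying in some maximal subgroup $M$ with $MN=G$ number fewer than $|N|^{k+1}$. Since such an $M$ contains exactly $|M\cap N|^{k+1}$ lifts, this means controlling the number of these supplements and the orders $|M\cap N|$, including the case $M\cap N=1$. That requires the Sylow hypothesis to bound the relevant multiplicities, together with classification-based information about $S$. This counting is the substantive, classification-dependent content of the cited results, and without it your proposal does not prove the lemma.
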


We also recall another well-known fact.

 \begin{lemma}\label{l-nil-rank}
 If $N$ is a nilpotent group of class $c$ generated by $k$ elements, then the rank of $N$ is bounded in terms of $k$ and $c$, and if $e$ is the exponent of $N$, then the order of $N$ is bounded in terms of  $k$,  $c$, and $e$.
 \end{lemma}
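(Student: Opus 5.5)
The plan is to reduce first to $p$-groups and then to the two claims separately: the rank bound in terms of $k$ and $c$, and the order bound once the exponent $e$ is also fixed. Since $N$ is finite nilpotent, it is the direct product of its Sylow subgroups $N_p$. Each $N_p$ is a homomorphic image of $N$, so it is $k$-generated and of class at most $c$. The rank of $N$ is the maximum of the ranks of the $N_p$, as recalled before Lemma~\ref{l-rank-syl}. Hence it suffices to bound the rank of a $k$-generated $p$-group of class at most $c$ by a function of $k$ and $c$ independent of $p$.

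For the rank bound I will use the lower central series $N=\gamma_1(N)\geq \gamma_2(N)\geq\dots\geq\gamma_{c+1}(N)=1$. The key observation is that $\gamma_i(N)/\gamma_{i+1}(N)$ is an abelian group generated by the images of the simple commutators $[x_{j_1},\dots,x_{j_i}]$ in the $k$ generators. This is the standard fact that, modulo $\gamma_{i+1}$, commutators of weight $i$ are multilinear in their entries. Therefore each factor is an abelian group generated by at most $k^i$ elements, and so has rank at most $k^i$, since subgroups of a $d$-generated abelian group are $d$-generated. Now take any subgroup $H\le N$. It has the series $H\cap\gamma_i(N)$, whose factors embed in $\gamma_i(N)/\gamma_{i+1}(N)$. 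Hence $H$ is generated by at most $\sum_{i=1}^{c}k^i$ elements, which bounds the rank of $N$ by a function of $k$ and $c$ alone. This argument does not even need the reduction to Sylow subgroups, but the reduction is harmless.

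For the order bound, the same series gives it directly. Each factor $\gamma_i(N)/\gamma_{i+1}(N)$ is abelian, generated by at most $k^i$ elements, and of exponent dividing $e$. So its order is at most $e^{k^i}$, and therefore $|N|\le e^{\sum_{i=1}^c k^i}$. Alternatively, $N$ is a quotient of the relatively free group of the variety of nilpotent groups of class $c$ and exponent $e$ on $k$ generators, which is finite by the restricted Burnside-free argument above. The direct counting is cleaner, so I will use that.

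The only step needing care is the claim that $\gamma_i(N)/\gamma_{i+1}(N)$ is generated by the images of the weight-$i$ simple commutators in the generators. This is classical (see e.g.\ Robinson, 5.1.5, or Hall's commutator calculus). It follows by induction on $i$ from the identities $[ab,c]=[a,c]^b[b,c]$ and $[a,bc]=[a,c][a,b]^c$, together with the fact that $[\gamma_i(N),N]=\gamma_{i+1}(N)$ is generated modulo $\gamma_{i+2}(N)$ by commutators $[u,x_j]$ with $u$ ranging over the generators of $\gamma_i$ modulo $\gamma_{i+1}$. I will cite this rather than reprove it.
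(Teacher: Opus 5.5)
Your proof is correct. The paper gives no proof of this lemma; it simply recalls it as a well-known fact. Your argument via the lower central factors $\gamma_i(N)/\gamma_{i+1}(N)$ is the standard justification: each factor is generated by the images of the at most $k^i$ simple commutators of weight $i$ in the generators. It yields the explicit bounds $\sum_{i=1}^{c}k^i$ for the rank and $e^{\sum_{i=1}^{c}k^i}$ for the order. It also works verbatim for class at most $c$, which is the form in which the paper applies the lemma, for example in the proof of Theorem~F.
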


The following lemma appeared independently and simultaneously in
the papers of Yu.~M.~Gorchakov~\cite{grc}, Yu.~I.~Merzlyakov~\cite{me}, and as ``P.~Hall's lemma" in the paper of J.~Roseblade~\cite{rs}.

\begin{lemma}\label{l-gmh}
 Let $p$ be a~prime number. The rank of a~$p$-group of
automorphisms of an abelian
finite $p$-group of rank~$r$ is bounded in
terms of~$r$.
\end{lemma}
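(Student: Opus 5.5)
The plan is to reduce to a $p$-subgroup of a general linear group over $\mathbb Z/p^e\mathbb Z$, and then split that group into a powerful congruence subgroup and a quotient lying in a fixed small linear group.

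\emph{Reduction to $GL_r(\mathbb Z/p^e\mathbb Z)$.} Let $A$ be an abelian $p$-group of rank $r$ and exponent $p^e$, and let $P\le \operatorname{Aut}A$ be a $p$-group. Write $R=\mathbb Z/p^e\mathbb Z$ and $F=R^r$. Since $A/pA$ has dimension at most $r$, there is an epimorphism $\pi\colon F\to A$; put $K=\ker\pi$.
\begin{enumerate}
\item For every $\alpha\in\operatorname{Aut}A$, freeness of $F$ gives an endomorphism $\tilde\alpha$ of $F$ with $\pi\tilde\alpha=\alpha\pi$.
\item We may choose $\pi$ with $r=\dim A/pA$, so that $F/pF\cong A/pA$. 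Then $\tilde\alpha$ induces an invertible map on $F/pF$, and by Nakayama's lemma $\tilde\alpha\in GL_r(R)$.
\item Let $S$ be the stabilizer of $K$ in $GL_r(R)$. Every element of $S$ induces an automorphism of $A\cong F/K$, and the resulting homomorphism $S\to\operatorname{Aut}A$ is onto by the two previous items.
\item The preimage of $P$ in $S$ has a Sylow $p$-subgroup $Q$ mapping onto $P$.
\end{enumerate}
Rank does not increase on passing to subgroups and quotients, so the rank of $P$ is at most the rank of $Q$. Since $Q$ lies in a Sylow $p$-subgroup of $GL_r(R)$, it suffices to prove that every $p$-subgroup of $GL_r(R)$ has $r$-bounded rank.

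\emph{Splitting $Q$.} Let $\Gamma$ be the congruence subgroup of $GL_r(R)$ consisting of matrices congruent to $1$ modulo $p$, or modulo $4$ when $p=2$. Then
$$\operatorname{rank}Q\le \operatorname{rank}(Q\cap\Gamma)+\operatorname{rank}\bigl(Q/(Q\cap\Gamma)\bigr).$$
The quotient $Q/(Q\cap\Gamma)$ embeds in a $p$-subgroup of $GL_r(\mathbb Z/p\mathbb Z)$, or of $GL_r(\mathbb Z/4\mathbb Z)$ when $p=2$.
\begin{enumerate}
\item For $p$ odd, a $p$-subgroup of $GL_r(\mathbb Z/p\mathbb Z)$ is conjugate into the unitriangular group. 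That group has a central series of length less than $r$ with elementary abelian factors of rank at most $r$. Hence every subgroup is generated by at most $r(r-1)/2$ elements.
\item For $p=2$, the $2$-subgroups of $GL_r(\mathbb Z/4\mathbb Z)$ are handled the same way. Such a group is an extension of a subgroup of the elementary abelian kernel of reduction modulo $2$, which has rank at most $r^2$, by a unitriangular-type group over $\mathbb F_2$.
\end{enumerate}
So the quotient has rank bounded in terms of $r$.

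\emph{The congruence subgroup.} The main obstacle is to show that every subgroup of $\Gamma$ is generated by at most $r^2$ elements. I would obtain this from the theory of powerful $p$-groups. The first congruence subgroup $\Gamma_1(\mathbb Z_p)$ of $GL_r(\mathbb Z_p)$, taken modulo $p$, or modulo $4$ if $p=2$, is a uniform pro-$p$ group of dimension $r^2$. This follows from the logarithm/exponential correspondence with $p\,M_r(\mathbb Z_p)$, or $4\,M_r(\mathbb Z_2)$. The group $\Gamma$ is its image under reduction modulo $p^e$, so $\Gamma$ is a powerful $p$-group generated by $r^2$ elements. By the Lubotzky--Mann theorem, every subgroup of a $d$-generated powerful $p$-group is $d$-generated, so $\operatorname{rank}(Q\cap\Gamma)\le r^2$. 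Combining the two bounds gives a bound on the rank of $P$ depending only on $r$. If one prefers to avoid pro-$p$ groups, the fallback is to verify directly that $[\Gamma,\Gamma]\le\Gamma^p$ (respectively $\Gamma^4$) using the identity $(1+pX)^p\equiv 1+p^2X$ modulo higher terms.
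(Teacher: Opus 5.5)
Your proof is correct. There is nothing in the paper to compare it with step by step: the paper does not prove this lemma, but quotes it from Gorchakov, Merzlyakov and Roseblade (P.~Hall's lemma). So the comparison is with those classical sources.

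\textbf{Reduction.} Realizing $\operatorname{Aut}A$ as a quotient of the stabilizer $S$ of $K=\ker\pi$ in $GL_r(\mathbb Z/p^e\mathbb Z)$ is a clean way to handle non-homocyclic $A$. One fact deserves explicit mention: a lift $\tilde\alpha$ satisfies $\tilde\alpha(K)\subseteq K$ because $\pi\tilde\alpha=\alpha\pi$, and hence $\tilde\alpha(K)=K$ by injectivity, so $\tilde\alpha\in S$. Taking a Sylow $p$-subgroup of the preimage of $P$ then legitimately reduces everything to $p$-subgroups of $GL_r(\mathbb Z/p^e\mathbb Z)$.

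\textbf{Congruence part.} This is where your route really differs from the classical one. You invoke two results: uniformity of the congruence subgroup of $GL_r(\mathbb Z_p)$, and the Lubotzky--Mann theorem that every subgroup of a $d$-generated powerful $p$-group is $d$-generated. Together they make this step immediate, but they import machinery from the late 1980s. The classical results predate powerful $p$-groups, so they must handle the congruence kernel by direct elementary computation. P.~Hall's lemma as recorded by Roseblade is usually quoted with the explicit bound $\tfrac12 r(5r-1)$.

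\textbf{Bounds.} Your argument gives $r^2+\tfrac12 r(r-1)$ for odd $p$. For $p=2$ it gives $2r^2+\tfrac12 r(r-1)=\tfrac12 r(5r-1)$. So it recovers that quoted bound for $p=2$ and improves it for odd $p$.

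\textbf{Fallback.} Your elementary fallback also works, with one addition: you must show that $\Gamma^p$ contains the whole next congruence subgroup. The identity $(1+pX)^p\equiv 1+p^2X$ alone only gives this modulo higher congruence subgroups. Finish by descending induction on the level, using $(1+p^kX)^p\equiv 1+p^{k+1}X \pmod{p^{k+2}}$, valid for $k\ge 1$ when $p$ is odd and for $k\ge 2$ when $p=2$, together with the fact that the congruence subgroup of level $e$ is trivial.
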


We also recall the following well-known fact (see, for example, \cite[Lemma~6.5]{agks}).

\begin{lemma}\label{l-rlt}
  If $G$ is a finite simple group of Lie type of Lie rank $d$ over the field $\mathbb{F}_{p^e}$ for a prime $p$, then the Pr\"ufer rank of $G$ is bounded in terms of $d$ and $e$.
\end{lemma}

Our objectives will usually be obtaining bounds for the ranks of finite simple groups involved; but once such a bound is already known, we shall be using their properties, which we collect in the following lemma.

  \begin{lemma}\label{l-113-3}
  Let $S$ be a finite simple group of rank $k$.

  \begin{itemize}
    \item[\rm (a)] If $S=A_m$ is an alternating group on $m$ symbols, then $m\leq 2k+3$.

    \item[\rm (b)] If $S$ is a group of Lie type of Lie rank $d$ over a field $\mathbb{F}_{p^f}$ for a prime $p$, then both $d$ and $f$ are $k$-bounded.

       \item[\rm (c)]  The order of the  outer automorphism group $ \operatorname{Aut}S/S$ is $k$-bounded.
  \end{itemize}
    \end{lemma}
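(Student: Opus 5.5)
We prove the three parts of Lemma~\ref{l-113-3} separately, each by comparing the rank $k$ of $S$ with the rank of a suitable elementary abelian (or other explicit) subgroup. The guiding principle is that the rank of a group is at least the rank of any of its subgroups, and an elementary abelian $p$-group of order $p^n$ has rank $n$.

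For (a), let $S=A_m$. We use the subgroup generated by the disjoint double transpositions $(1\,2)(3\,4),(5\,6)(7\,8),\dots$, together with products of pairs of disjoint transpositions. Concretely, the even elements of $\langle (1\,2),(3\,4),\dots,(2t-1\,\,2t)\rangle$ with $t=\lfloor m/2\rfloor$ form an elementary abelian $2$-subgroup of order $2^{t-1}$. Hence $k\geq \lfloor m/2\rfloor-1\geq (m-1)/2-1$, which gives $m\leq 2k+3$.

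For (b), let $S$ be of Lie type of Lie rank $d$ over $\mathbb{F}_{p^f}$. We bound $f$ and $d$ using two subgroups.
\begin{itemize}
\item[(i)] A root subgroup $U_\alpha$ of $S$ is isomorphic to the additive group of $\mathbb{F}_{p^f}$ (or of a subfield of index at most $3$, or contains such a group, as in the twisted cases). It is therefore elementary abelian of rank at least $f/3$, so $f\leq 3k$.
\item[(ii)] The unipotent radical of a Borel subgroup contains an abelian subgroup generated by pairwise commuting root subgroups, for example those of a maximal set of mutually orthogonal roots, or the abelian unipotent radical of a maximal parabolic. Its $p$-rank grows at least linearly in $d$, so $d$ is $k$-bounded.
\end{itemize}
Alternatively, for (ii) one can use the split maximal torus, which has $d$ cyclic factors of order about $q-1$. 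This works unless $q$ is very small, so the unipotent argument is the uniform choice. A finite list of exceptional and small cases only affects the constants.

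For (c), we use the classification. If $S$ is sporadic, $|\operatorname{Aut}S/S|\leq 2$. If $S=A_m$, then $|\operatorname{Aut}S/S|\leq 4$. If $S$ is of Lie type, then $|\operatorname{Out}S|=d_0 f g$, where $d_0$ (diagonal automorphisms) is at most about $d+1$, $g$ (graph automorphisms) is at most $6$, and $f$ counts field automorphisms. All of these are $k$-bounded by (b).

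The main obstacle is (ii), the uniform lower bound on $k$ in terms of $d$ across all twisted types. For a rigorous and uniform argument, we would instead cite a known bound for the $p$-rank of groups of Lie type, for instance via the unipotent radical of a maximal parabolic; this is the route the well-known references take.
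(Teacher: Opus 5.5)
Your parts (a) and (c), and your bound on $f$ in (b), follow the paper's argument. The paper gets $f\le k$ from an elementary abelian subgroup of order at least $p^f$ in a Sylow $p$-subgroup; your $f\le 3k$ only reflects your hedge on the field convention for twisted groups. Your formula $|\operatorname{Out}S|=d_0fg$ just makes explicit the ``well known'' bound the paper invokes.

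The real difference is how you bound the Lie rank $d$. The paper observes that for $d>8$ the group $S$ is classical and contains an alternating group of degree at least $d$, a fact it quotes from earlier work. Part (a) then immediately gives $d\le 2k+3$. This is uniform over all classical types and reuses (a).

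You instead bound the $p$-rank in the defining characteristic from below by large elementary abelian unipotent subgroups. This can give a far stronger bound, since the $p$-rank of classical groups is quadratic in $d$, but it needs type-by-type care, and two of your suggested choices need repair:
\begin{itemize}
\item Root subgroups of mutually orthogonal roots need not commute. In $B_n$ the short roots $e_i,e_j$ are orthogonal, yet $e_i+e_j$ is a root and $[U_{e_i},U_{e_j}]\ne 1$ in odd characteristic. You need strongly orthogonal roots.
\item In $SU_{2m+1}(q)$ no maximal parabolic has abelian unipotent radical. There you should use the centre, of order $q^{m^2}$, of the unipotent radical of the stabilizer of a maximal totally isotropic subspace.
\end{itemize}

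Your ``main obstacle'' also disappears with a uniform variant of your idea. In a Sylow $p$-subgroup $U$, let $U_{\geq 2}$ be the product of the positive root subgroups of height at least $2$. It is normal in $U$, and $U/U_{\geq 2}$ is the direct product of the simple root subgroups. So in the untwisted case $U$ already needs $df$ generators, and $k\ge df$ bounds $d$ and $f$ at once. The twisted case is similar, using the twisted simple roots.
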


  \begin{proof}
 The lemma is obviously true if $S$ is a sporadic group.

If $S=A_m$ is an alternating group, then $ |\operatorname{Aut}S/S|\leq 4$. A Sylow $2$-subgroup of the symmetric group $S_m$ contains $[m/2]$ commuting transpositions with disjoint support and therefore an elementary abelian $2$-subgroup of rank $[m/2]$. Hence $A_m$ contains an elementary abelian $2$-subgroup of rank $[m/2]-1$, and therefore $m\leq 2k+3$.

Now suppose that $S$ is a group of Lie type of Lie rank $d$ over a field $\mathbb{F}_{p^f}$. As shown in the proof of \cite[Lemma~6.4]{agks},  for $d>8$ the group $S$ contains an alternating group of degree at least~$d$ (this may also be a folklore result). Then  $d\leq 2k+3$ by part (a).  Since the Sylow $p$-subgroup of $S$ contains an elementary  abelian subgroup of order at least $p^f$, we obtain $f\leq k$.  It is well known that the order of the outer automorphism group  $ \operatorname{Aut}S/S$  is bounded in terms of $d$ and $f$. Since both $d$ and $f$ are $k$-bounded, therefore $|\operatorname{Aut}S/S|$ is also $k$-bounded.
  \end{proof}

Another useful property of simple groups of given rank is essentially a result, based on the classification of finite simple groups, of  J.~Hall, M.~Liebeck, and G.~Seitz \cite{hls} (later improved by R.~Guralnick and J.~Saxl \cite{GSaxl}) on generation of (almost) simple groups by conjugates.

 \begin{theorem}[{see \cite{hls} and \cite{GSaxl}}]
   \label{t-hls}
   If $S$ is a simple group of rank $k$, then for any nontrivial element $x\in S$ there are $k$-boundedly many conjugates of $x$ that generate $S$.
 \end{theorem}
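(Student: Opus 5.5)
The plan is to reduce, via Lemma~\ref{l-113-3}, to groups of Lie type of bounded Lie rank, and then to invoke the explicit bounds of Guralnick and Saxl, which depend only on the Lie rank; this last step is where the real difficulty lies.

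First I would note a general trivial bound. Since $S$ is simple and $x\ne 1$, the conjugacy class $x^S$ generates a nontrivial normal subgroup, hence generates $S$. Choose conjugates $x_1,x_2,\dots$ greedily so that $x_{i+1}\notin\langle x_1,\dots,x_i\rangle$. The orders of the subgroups $\langle x_1,\dots,x_i\rangle$ at least double at each step, so $S$ is generated by at most $\log_2|S|$ conjugates of $x$.

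Now split according to the classification, using Lemma~\ref{l-113-3}.
\begin{itemize}
\item If $S$ is sporadic, there are only finitely many possibilities, so the trivial bound is an absolute constant.
\item If $S=A_m$, then $m\le 2k+3$ by part (a), so $|S|$ is $k$-bounded and the trivial bound suffices.
\item If $S$ is of Lie type of Lie rank $d$ over $\mathbb F_{p^f}$, then $d$ and $f$ are $k$-bounded by part (b). The characteristic $p$, however, is unbounded; for instance, $PSL_2(p)$ has rank $2$ for every $p$. So the trivial bound is useless here.
\end{itemize}

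The remaining case, Lie type with $d$ bounded but $p$ arbitrary, is the main obstacle. What is needed is a bound on the number of conjugates that depends only on $d$, uniformly in the field. This is precisely the content of \cite{hls}, with explicit bounds in \cite{GSaxl}. For example, for a classical group with natural module of dimension $n$ (which is $d$-bounded), any nontrivial element has $n$ conjugates generating $S$, apart from a few small exceptions with slightly larger constants. For exceptional groups the number of conjugates needed is an absolute constant, at most $8$ in \cite{GSaxl}.

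If one wanted to sketch that argument rather than cite it, I would argue as follows. Pick a bounded number of conjugates $x_1,\dots,x_m$ of $x$, with $m$ roughly the dimension of the natural module, placed in general position so that the subgroup $H=\langle x_1,\dots,x_m\rangle$ fixes no proper nonzero subspace. Then $H$ lies in no reducible (parabolic) maximal subgroup. Next, use Aschbacher's classification of maximal subgroups, or the Liebeck--Seitz results for exceptional groups, to rule out the other classes:
\begin{itemize}
\item subfield subgroups, by choosing the conjugates so that traces generate the full field;
\item imprimitive, tensor and normalizer-of-torus subgroups, by counting fixed-point ratios;
\item almost simple subgroups of bounded order (bounded in terms of $d$), since for large $q$ a few random conjugates avoid them.
\end{itemize}
The small values of $q$ that escape these counting arguments give groups of $k$-bounded order, which are again covered by the trivial bound. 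Combining all cases gives a $k$-bounded number of conjugates in every case. In the paper itself I would simply quote \cite{hls,GSaxl} for the Lie type case.
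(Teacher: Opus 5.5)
Your proposal is correct and is essentially the paper's proof. Sporadic and alternating groups have $k$-bounded order, and your greedy bound of $\log_2|S|$ conjugates simply makes that step explicit. For groups of Lie type, Lemma~\ref{l-113-3}(b) bounds the Lie rank, and one then cites \cite{hls} and \cite{GSaxl} for a bound that depends only on the Lie rank, uniformly in the field.

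Your optional sketch of the Lie-type case is not needed, and as written it would not work. The almost simple irreducible maximal subgroups include defining-characteristic groups such as $PSL_2(q)$, whose orders grow with $q$ and so are not $d$-bounded.
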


\begin{proof}
  We only need to consider alternating groups and groups of Lie type. If $S$ is an alternating group, then it has $k$-bounded order by Lemma~\ref{l-113-3}(a). If $S$ is a group of Lie type, then its Lie rank $d$ is $k$-bounded by Lemma~\ref{l-113-3}(b).  The result of J.~Hall, M.~Liebeck, and G.~Seitz \cite{hls} (later improved by R.~Guralnick and J.~Saxl \cite{GSaxl}) states that there are $d$-boundedly many conjugates of $x$ that generate $S$.
\end{proof}

\section{The soluble case}
\label{soluble_section}

The aim of this section is to prove Theorem~A, which deals with  soluble groups. First we make use of our Theorem~\ref{t-1-4} about coprime actions.
For a prime $p$, the largest normal $p$-subgroup of a group $G$ is denoted by $O_p(G)$, and the largest normal $p'$-subgroup by $O_{p'}(G)$.

\begin{lemma}\label{l-2}
Let $p$ be a prime, and $K$ a $p'$-group acting on a group $G$ in such a manner that $[G,K]\leq O_p(G)$. Suppose that any subgroup generated by a subset of $I_G(K)$ is $r$-generator. Then $[G,K]$ has $r$-bounded rank.
\end{lemma}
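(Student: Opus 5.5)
The plan is to reduce to a coprime action on a $p$-group and then invoke Theorem~\ref{t-1-4}. Work in the semidirect product $GK$ and put $Q=[G,K]$. This is a $p$-subgroup, since $Q\leq O_p(G)$. It is also normal in $GK$, by the standard fact that $[G,K]$ is normalized by both $G$ and $K$. Hence $K$ acts on $Q$ by conjugation, and this action is coprime because $K$ is a $p'$-group.

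The key step is to show that $[G,K]=[Q,K]$. Fix $g\in G$ and $k\in K$. Then $k^g=k[k,g]\in kQ$, so both $\langle k\rangle$ and $\langle k^g\rangle$ lie in $Q\langle k\rangle$. Both are complements to the normal Sylow $p$-subgroup $Q$ of $Q\langle k\rangle$, since $\langle k\rangle$ has $p'$-order. By the Schur--Zassenhaus theorem they are conjugate by some $u\in Q$, that is, $\langle k^g\rangle=\langle k\rangle^u$. The natural map $Q\langle k\rangle\to Q\langle k\rangle/Q$ is injective on $\langle k\rangle^u$, and $k^g$ and $k^u$ have the same image, namely $kQ$. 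Therefore $k^g=k^u$, and so
\[
[k,g]=k^{-1}k^g=k^{-1}k^u=[k,u]\in[Q,K].
\]
Taking inverses and generating gives $[G,K]\leq[Q,K]$. Since $Q=[G,K]$, the reverse inclusion is obvious, so $[G,K]=[Q,K]$.

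It remains to apply Theorem~\ref{t-1-4} to the action of $K$ on $Q$. Let $\bar K$ be the image of $K$ in $\operatorname{Aut}Q$. Then $|\bar K|$ divides $|K|$, so $\bar K$ is a group of coprime automorphisms of $Q$. Moreover $I_Q(\bar K)=I_Q(K)\subseteq I_G(K)$, so every subgroup generated by a subset of $I_Q(\bar K)$ is $r$-generated by hypothesis. Theorem~\ref{t-1-4} then gives that $[Q,\bar K]=[Q,K]=[G,K]$ has $r$-bounded rank.

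The only step that is not purely formal is the identity $[G,K]=[Q,K]$. This is needed because $K$ need not act coprimely on all of $G$, so the coprime-action tools cannot be applied to $G$ directly. The conjugacy-of-complements argument above is what handles this.
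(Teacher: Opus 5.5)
Your argument is correct, and its outline matches the paper's. Both show that $[G,K]=[P,K]$ for a normal $K$-invariant $p$-subgroup $P$ on which $K$ acts coprimely, and then apply Theorem~\ref{t-1-4} to $P$. The only difference is how that identity is obtained.

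The paper takes $P=O_p(G)$. Since $K$ acts trivially on $G/O_p(G)$, Lemma~\ref{l-1} gives $G=C_G(K)O_p(G)$. Then $[cn,k]=[c,k]^n[n,k]=[n,k]$ for $c\in C_G(K)$ and $n\in O_p(G)$, so $[G,K]=[O_p(G),K]$ in one line.

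You take $P=Q=[G,K]$ and prove the identity elementwise, using conjugacy of complements in $Q\langle k\rangle$. This amounts to a direct proof, for each cyclic $\langle k\rangle$, of the coprime-action fact that Lemma~\ref{l-1} packages. Your version is therefore self-contained but longer. Both versions in fact give equality of the commutator sets, $I_G(K)=I_P(K)$, not just of the subgroups they generate.
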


\begin{proof}
 By Lemma~\ref{l-1} we have $G=C_G(K)O_p(G)$. Then $[G,K]=[O_p(G), K]$ and the result follows by Theorem~\ref{t-1-4}.
\end{proof}

Due to  Lemma~\ref{l-2}, the proof of the next lemma is essentially about finite $p$-groups. We will require the concept of powerful $p$-groups introduced by A.~Lubotzky and A.~Mann~\cite{LM}. A finite $p$-group $H$ is \emph{powerful} if and only if $[H,H] \leq H^p$ for $p\neq 2$ (or $[H,H]\leq H^4$ for $p = 2$). Apart from the original paper \cite{LM}, information about the properties of powerful $p$-groups can also be found in the book \cite{ddms}.

Recall that $F(G)$ denotes the Fitting subgroup of a group $G$, and  $\gamma_i(G)$ denotes the $i$-th term of the  lower central series of~$G$.

\begin{lemma}\label{last}
Let $M$ be a nilpotent subgroup of a group $G$ such that $[G,M]\leq F(G)$. Assume that any subgroup generated by a subset of $I_G(M)$ is $r$-generator. Then $[G,M]$ has $r$-bounded rank.
\end{lemma}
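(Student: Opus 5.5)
The plan is to reduce to a single prime and then split $M$ into its $p$- and $p'$-parts. Since $[G,M]\le F(G)$ is nilpotent, its rank is the maximum of the ranks of its Sylow subgroups. So it suffices to fix a prime $p$ and bound the rank of the Sylow $p$-subgroup of $[G,M]$ in terms of $r$.

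\emph{Reduction to $O_p$.} Put $N=O_{p'}(F(G))$, which is normal in $G$. The Sylow $p$-subgroup of $[G,M]$ is isomorphic to $[G,M]N/N=[\bar G,\bar M]$, where $\bar G=G/N$. By Lemma~\ref{l-inher} the hypothesis passes to $\bar G$, and $[\bar G,\bar M]\le O_p(\bar G)$. Hence we may assume $[G,M]\le O_p(G)$. Write $M=M_p\times M_{p'}$, where $M_p$ is the Sylow $p$-subgroup of $M$.

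\emph{The $p'$-part.} Since $[G,M_{p'}]\le O_p(G)$ and $I_G(M_{p'})\subseteq I_G(M)$, Lemma~\ref{l-2} applied to $K=M_{p'}$ shows that $[G,M_{p'}]$ has $r$-bounded rank. The subgroup $[G,M_{p'}]$ is normalized by $G$, so it is normal in $G$. Factoring it out (again using Lemma~\ref{l-inher}), we may assume that $M_{p'}$ acts trivially, that is, $[G,M_{p'}]=1$. Then $[G,M]=[G,M_p]$, because $[g,ab]=[g,b][g,a]^b$. Since rank is subadditive over extensions, it now suffices to bound the rank of the $p$-group $Q=[G,M_p]$.

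\emph{The $p$-part and the expected obstacle.} Here we know three things. First, $Q$ is generated by $I_G(M_p)$ and is $r$-generated. Second, for every $G$-invariant $U\le Q$ we have $I_U(M_p)\subseteq I_G(M_p)$, so every $[U,M_p]$ is $r$-generated. Third, $P=QM_p$ is a $p$-group acting on $Q$.

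By Lubotzky--Mann theory, a $p$-group all of whose normal subgroups are boundedly generated has bounded rank. So it is enough to bound the number of generators of an arbitrary subgroup of $Q$. I would aim to show that $Q$ has a powerful characteristic subgroup of $r$-bounded index, along the lines of the following steps.

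1. Apply the hypothesis to the subset of $I_G(M_p)$ consisting of commutators lying in a given $G$-invariant section. This bounds the rank of the elementary abelian sections $[U,M_p]/\Phi([U,M_p])$.
2. Use Lemma~\ref{l-gmh} to bound the rank of the image of $P$ acting on such an abelian section.
3. Combine this with Lemma~\ref{l-nil-rank} to control the quotient $Q/Q^{p^{k}}$ for an $r$-bounded $k$.

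The main obstacle is this last step. The series $Q\ge[Q,M_p]\ge[Q,M_p,M_p]\ge\cdots$ may have unbounded length, so one cannot simply iterate. What I would try first is to use the identity $[g,x][g,x]^{x}\cdots[g,x]^{x^{n-1}}=[g,x^n]$ to express enough elements of $Q^{p}$ as commutators with $M_p$. This would make a bounded-index subgroup of $Q$ powerful, and since powerful $p$-groups have rank equal to their number of generators, the required rank bound would follow.
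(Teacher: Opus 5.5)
Your preliminary reductions are correct and match the paper. You reduce to one prime, pass to a quotient with $[G,M]\le O_p(G)$, and remove $[G,M_{p'}]$ by Lemma~\ref{l-2}, so that $[G,M]=[G,M_p]$. The gap is in the $p$-part, which is the real content of the lemma. There you give only a programme (steps 1--3) and concede that its decisive step is an obstacle.

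The facts you collect cannot close this gap, because they concern only commutators with $M_p$ itself. They give no control over the lower central series of $P=QM_p$, or over other normal subgroups of $P$. As you observe yourself, the series $Q\ge[Q,M_p]\ge\cdots$ may be long. Also, ``it is enough to bound the number of generators of an arbitrary subgroup of $Q$'' is just a restatement of the goal; even read as ``normal subgroup'', you never bound these. The telescoping identity for $[g,x^n]$ does not help either. Writing elements of $Q^p$ as commutators says nothing about the condition $[H,H]\le H^p$ (or $H^4$) for a subgroup $H$ of bounded index.

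The missing idea is to use boundedly many \emph{conjugates} of $M_p$. Since $Q=\langle I_G(M_p)\rangle$ is $r$-generated, the Burnside basis theorem gives $Q=\langle [a_1,m_1],\dots,[a_r,m_r]\rangle$ with $a_i\in G$ and $m_i\in M_p$. As $[a_i,m_i]\in M_p^{a_i}M_p$ and $M_p^{a_i}\le M_pQ$, you get $P=\langle M_p^{a_0},\dots,M_p^{a_r}\rangle$ with $a_0=1$.

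Then for every $N\trianglelefteq P$, the subgroup $[N,P]$ is generated by the $r+1$ subgroups $[N,M_p^{a_i}]$. Their join is $P$-invariant because $[n,x]^y=[n,x]\,[[n,x],y]$. Each $[N,M_p^{a_i}]$ is generated by a subset of $I_G(M_p^{a_i})=I_G(M_p)^{a_i}$, and hence is $r$-generated. So $d([N,P])\le r(r+1)$, and in particular every $\gamma_j(P)$ with $j\ge 2$ is $r(r+1)$-generated. This is what replaces your unbounded series.

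The paper then finishes as follows. Let $V$ be the intersection of the kernels of all homomorphisms $P\to GL_{r(r+1)}(\mathbb{F}_p)$, and put $W=V$, or $W=V^2$ if $p=2$. Since $P/V$ has class less than $r(r+1)$, some $\gamma_c(P)$ with $c\le 2r(r+1)$ lies in $W$. By \cite[Proposition~2.12]{ddms} it is therefore powerful of rank at most $r(r+1)$. Lemma~\ref{l-nil-rank} bounds the rank of $P'/\gamma_c(P)$. Finally, the image of $Q$ in $P/P'$ is abelian and $r$-generated, which bounds the rank of $Q$.
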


\begin{proof}
By Lemma~\ref{l-rank-syl} it is sufficient to prove that the rank of each Sylow $p$-subgroup of $[G,M]$ is $r$-bounded. Passing to the quotient of $G$ by $O_{p'}([G,M])$, we can assume that $[G,M]$ is a $p$-group.

Let $K=O_{p'}(M)$. Note that $[G,K]$ is contained in $O_p(G)$ (because $[G,M]\leq O_p(G)$ by the assumption). Then Lemma~\ref{l-2} shows that $[G,K]$ has $r$-bounded rank. Passing to the quotient $G/[G,K]$ we can assume that $K\le Z(G)$ and $[G,M]=[G,O_p(M)]$. This reduces the lemma to the case where $M$ is a $p$-group.

Therefore we assume henceforth that $M$ is a $p$-group and set $P=[G,M]M$, which is also a $p$-group.  Since $[G,M]=\langle I_G(M)\rangle$ is $r$-generated by hypothesis, by  the Burnside Basis Theorem \cite[5.3.2]{rob2} the commutator subgroup $[G,M]$ is generated by at most $r$ elements from $I_G(M)$, say, $[a_1,m_1],\dots ,[a_r,m_r]$  for $a_i\in G$, $m_i\in M$. Since $[a_i,m_i]\in M^{a_i}M$, it follows that $P=\langle M, M^{a_1},\dots,   M^{a_r}\rangle$. Introducing for convenience $a_0=1$, we will write $P=\langle M^{a_i}\mid  i=0,\dots,r\rangle$.

 Since $P=\langle M^{a_i}\mid i=0,\dots,r\rangle$, it follows that for any subgroup $N$ normalized by $P$ we have  $[N,P]=\prod_{i=0}^{r}[N,M^{a_i}]$. Indeed, the product on the right is $M^{a_i}$-invariant for every $i$ and $M^{a_i}$ acts trivially on the quotient of $N$ by this product; hence  $[N,P]\leq \prod_{i=0}^{r}[N,M^{a_i}]$, while the reverse inclusion is obvious. For any $a_i$ we have $I_G(M^{a_i})=I_G(M)^{a_i}$ and so, by hypothesis, each of the subgroups $[N,M^{a_i}]=\langle I_N(M^{a_i})\rangle$ is $r$-generator.
 As a result, $[N,P]$ is generated by at most $r(r+1)$ elements.
 In particular,  the terms of the lower central series $\gamma_j(P)$ are all $r(r+1)$-generator for $j\geq 2$.

Let $V$
denote the intersection of the kernels of all homomorphisms of $P$ into $GL_{r(r+1)}(\mathbb{F}_p)$ (see \cite[Definition~2.10]{ddms}). Put $W= V$ if $p$ is odd, and $W=V^2$ if $p=2$.  If $p$ is odd, then  $P/W$ is nilpotent of class at most $r(r+1)-1$, since it is isomorphic to a subgroup of  the direct  product of several copies of  the lower unitriangular group $U_{r(r+1)}(\mathbb{F}_p)$.  It follows that $\gamma_{r(r+1)}(P)\leq W$.  Since $\gamma_{r(r+1)}(P)$ is  $r(r+1)$-generator as shown above, it follows that $\gamma_{r(r+1)}(P)$ is a powerful  $p$-group by \cite[Proposition 2.12]{ddms}, and  the rank of $\gamma_{r(r+1)}(P)$ is at most $r(r+1)$. Since $[P,P]$ is also $r(r+1)$-generator,  the quotient $[P,P]/\gamma_{r(r+1)}(P)$ has $r$-bounded rank by  Lemma~\ref{l-nil-rank}.
As a result, the rank of $[P,P]$ is  $r$-bounded.
We can now pass to the quotient $G/[P,P]$ and assume that $P$ is abelian. Since $[G,M]$ is generated by $r$ elements, the rank of this abelian subgroup (contained in $P$) is at most $r$. This completes the proof for $p\ne 2$.

If $p=2$, then $P/W$ has an elementary abelian $2$-subgroup $V/W$ such that $P/V$ is nilpotent of class at most $r(r+1)-1$. Recall that $\gamma_{r(r+1)}(P)$ can be generated by $r(r+1)$ elements. Hence $\gamma_{r(r+1)}(P)/(W\cap\gamma_{r(r+1)}(P))$ is elementary abelian of order at most $2^{r(r+1)}$. Therefore, $\gamma_{2r(r+1)}(P)\leq W$. Since $\gamma_{2r(r+1)}(P)$ can be generated by $r(r+1)$ elements, again by \cite[Proposition 2.12]{ddms} it follows that $\gamma_{2r(r+1)}(P)$ is a powerful $2$-group of rank at most $r(r+1)$. Using the same argument as above for $p\ne 2$, we deduce that $[P,P]$ has $r$-bounded rank. Passing to the quotient $G/[P,P]$,  we similarly conclude that  the rank of $[G,M]$ is $r$-bounded. The proof is complete.
\end{proof}

We are now ready to embark on the proof of Theorem~A, which we restate here for the reader's convenience.

\begin{theorem}\label{t-sol}
Let $r$ be a positive integer, $G$ a soluble finite group, and $C$ a Carter subgroup of $G$. Suppose that any subgroup generated by a subset of $I_G(C)$ can be generated by $r$ elements. Then the derived subgroup $G'$ has $r$-bounded rank.
\end{theorem}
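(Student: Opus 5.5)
We argue by induction on the Fitting height of $G$, using Lemma~\ref{last} as the engine. By Corollary~\ref{c-cart} we have $G'=[G,C]$, so the aim is to bound the rank of $[G,C]$. By Lemma~\ref{l-rank-syl} it suffices to bound the rank of each Sylow subgroup of $G'$. Hence we may fix a prime $p$ and aim for an $r$-bounded estimate on the $p$-rank.

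First we treat the case when $G$ is nilpotent-by-abelian. The key point is to find an $r$-boundedly long chain of normal subgroups $1=N_0\le N_1\le\dots\le N_t$ with $N_t\ge G'$, such that on each factor $N_{i}/N_{i-1}$ the commutator subgroup $[N_i,C]$ modulo $N_{i-1}$ lies in the Fitting subgroup of the appropriate quotient. Then Lemma~\ref{last} applies to the quotient $(N_iC)/N_{i-1}$, with $M$ the image of $C$, using the inheritance Lemma~\ref{l-inher}. It bounds the rank of $[N_i,C]N_{i-1}/N_{i-1}$.

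The natural choice is the upper Fitting series: $F_1=F(G)$, $F_{i}/F_{i-1}=F(G/F_{i-1})$. Set $G_i=F_iC$. Since $G$ is soluble and $C$ is a Carter subgroup, each quotient $G/F_{i-1}$ has Carter subgroup the image of $C$ (Theorem~\ref{t-cart}). In $G_i/F_{i-1}$ we have $[F_i,C]\le F_i$, and $F_i/F_{i-1}$ is nilpotent normal. Thus $[F_i,C]F_{i-1}/F_{i-1}$ lies in $F(G_i/F_{i-1})$, and Lemma~\ref{last} bounds its rank.

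However, the Fitting height of $G$ is not a priori bounded, so this series alone does not give a bound. This is the main obstacle. To overcome it, I would first bound the Fitting height in terms of $r$. Consider $\bar G=G/\Phi(G)$ and, for a prime $q$, its action on an irreducible chief factor $V$ of $F(\bar G)$. Then $[V,C]$ is an $r$-generated elementary abelian group, so $|[V,C]|\le q^r$. If $C$ acts nontrivially on $V$, the image of $C$ in $GL(V)$ acts with $|V/C_V(c)|\le q^r$ for every $c\in C$. Because $C$ covers $G/F(G)$ modulo the normal closure (Corollary~\ref{c-cart}), $G/C_G(V)$ is generated by conjugates of $C$. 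The generation hypothesis together with Lemma~\ref{l-gmh}-type arguments should then bound the rank of $G/C_G(V)$, and hence bound the derived length and Fitting height of $G/F(G)$ in terms of $r$ (soluble linear groups of bounded degree, Zassenhaus). Chief factors centralized by $C$ lie in $Z(G)$-like positions and do not contribute to $G'=[G,C]$.

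With the Fitting height $h$ bounded by an $r$-bounded number, the chain above has $r$-bounded length. Each step contributes $r$-bounded rank. Since rank is subadditive over extensions, the rank of $G'=\prod_i [F_i,C]F_{i-1}$, restricted to the appropriate terms, is $r$-bounded.

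I expect the main difficulty to be the Fitting-height bound. If it resists, a fallback is to pass to the quotient by $R=[F(G),C]$, whose rank is bounded by Lemma~\ref{last}. In $G/R$ the Carter subgroup centralizes $F(G)R/R$, so $F(G)\le Z$-hypercentral relative to $C$. One would then argue that $C$ contains $F(G)$ modulo $R$ and that $C_G(F(G)/R)$ controls the rest, giving an induction on $|G|$ with an $r$-bounded number of iterations.
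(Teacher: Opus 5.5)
\textbf{Main gap: the final assembly.} Even if the Fitting height were bounded, the quotients $[F_i,C]F_{i-1}/F_{i-1}$ are not the factors of a series of $G'$. The subgroup $G'\cap F_{i-1}$ (already $[F_i,C]\cap F_{i-1}$) can be much larger than $[F_{i-1},C]$. Moreover, $[F_{i-1},C]$ need not be normal in $G$, so you can neither add up ranks nor factor it out; this also sinks your fallback with $R=[F(G),C]$. Two examples:
\begin{itemize}
\item In $G=S_4$ with Carter subgroup $C\cong D_8$, $[F(G),C]$ has order $2$ and is not normal, while $G'\cap F(G)=V_4$.
\item Take $G=W\rtimes S_3$, with $W$ a sum of $n$ copies of the natural $\mathbb{F}_2S_3$-module, and $C=W\langle\tau\rangle$ a Sylow $2$-subgroup ($\tau\in S_3$ an involution). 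Your three pieces have ranks $n$, $1$, $0$, whereas $G'=W\rtimes C_3$ has rank at least $2n$.
\end{itemize}
So your scheme does not even handle Fitting height $2$; what is needed is control of all of $G'\cap F(G)$.

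The paper does this as follows. It takes the \emph{normal} subgroup $[G,C\cap F]\le F$, whose rank is bounded by Lemma~\ref{last}. Modulo it, $C\cap F$ is central, so in $G/Z(G)$ the image of $C$ has trivial fixed points on the image of $F$. Each Sylow $p$-subgroup of $F/Z(G)$ then equals its commutator with the Hall $p'$-subgroup of the image of $C$ (Lemma~\ref{l-3}), and the coprime-action Theorem~\ref{t-1-4} bounds the rank of $F/Z(G)$. Combined with induction on $G/F$, this bounds $G'Z(G)/Z(G)$. The Lubotzky--Mann theorem then bounds $G''$, and the metabelian case is finished through the abelian normal subgroup $[G',C]$ and a class-$2$ quotient. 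This coprime-action input is entirely absent from your plan.

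\textbf{Second gap: the Fitting-height bound.} You rightly flag it as the main obstacle, but it is not established. For an arbitrary irreducible chief factor $V$ you only know $\dim[V,C]\le r$, which does not bound $\dim V$, so Zassenhaus's theorem cannot be applied. The phrase ``Lemma~\ref{l-gmh}-type arguments should bound the rank of $G/C_G(V)$'' is not an argument. Incidentally, your claim that chief factors centralized by $C$ do not contribute to $G'$ is false: central chief factors can lie in $G'$, as in $Q_8$.

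The missing idea is Lemma~\ref{clam1_minimalnormal}: $G$ always has \emph{some} minimal normal subgroup $M$ of rank at most $r$. Either $C_M(C)=1$ and $M=[M,C]$, or $A=C_M(C)\le N_G(C)=C$. In the second case $[G,A]$ is generated by elements of $I_G(C)$ and is either $M$ or trivial; if trivial, $A$ is central and its cyclic subgroups are normal. Applying Zassenhaus to such small minimal normal subgroups successively in quotients shows that $G^{(d)}$ is nilpotent for an $r$-bounded $d$.

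\textbf{A fixable technicality.} Lemma~\ref{last} as stated needs all of $[F_iC,C]$ to lie in the Fitting subgroup modulo $F_{i-1}$. This fails when $C'\not\le F_i$, for example in $\mathbb{F}_3^2\rtimes Q_8$. You would need a version of the lemma for $[N,C]$ with $N$ a normal nilpotent subgroup.
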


\begin{proof}
 Recall that $G$ is a finite soluble group,  and $C$ is a Carter subgroup of $G$ such that any subgroup generated by a  subset of $I_G(C)$ is $r$-generator.  We need to prove that $G'=[G,C]$ has $r$-bounded rank.

 First we show that  $G$ has a minimal normal subgroup of rank at most~$r$.

\begin{lemma}\label{clam1_minimalnormal}
The group  $G$ has a minimal normal subgroup of rank at most~$r$.
\end{lemma}

\begin{proof}Let $M$ be a minimal normal subgroup of $G$, which is an elementary abelian $p$-group for some $p$.  The rank of $[M,C]$ is at most $r$, since it is an abelian subgroup generated by a subset of $I_G(C)$ and therefore is $r$-generator by hypothesis. First suppose that  $C_M(C)=1$. Then
$M=[M,C]$ by Lemma~\ref{l-3}, and thus $M$ is a minimal normal subgroup of rank at most~$r$.

We  denote $A=C_M(C)$ for brevity and now assume that $A\ne 1$.  Since $C$ is self-normalizing, we have $A\leq C$. Hence $[G,A]$ is generated by a subset of $I_G(C)$ and therefore has rank at most $r$ by hypothesis, being an abelian subgroup contained in $M$. If $[G,A]\ne 1 $, then in fact $[G,A]=M$ is a minimal normal subgroup of rank at most~$r$.

 If $[G,A]=1$, then $A$ is central in $G$ and  any cyclic subgroup of $A$ is normal in $G$. This completes the proof.
\end{proof}

Recall that the Fitting series of $G$ starts with the Fitting subgroup $F_1(G) = F (G)$, and by induction $F_{i+1}(G)$ is the inverse image of $F (G/F_i(G))$. If $G$ is soluble, then the least number $h$ such that $F_h(G) = G$ is called the Fitting height of $G$. We also recall the notation $G^{(k)}$ for the $k$-th term of the derived series of a group $G$, starting from $G^{(0)}=G$ and $G^{(1)}=G'$.

We now return to the proof of Theorem~\ref{t-sol}.

\begin{lemma}\label{l-fh}
The Fitting height of $G$ is $r$-bounded.
\end{lemma}
\begin{proof}
Let $M$ be a minimal normal subgroup of $G$ of rank at most $r$, which exists by
Lemma~\ref{clam1_minimalnormal}.
Observe that the quotient group $G/C_G(M)$ acts faithfully  as a linear group on $M$ regarded as a vector space of dimension at most $r$ over the prime field $\mathbb{F}_q$ for some prime~$q$. Hence the derived length of $G/C_G(M)$  is $r$-bounded  by Zassenhaus's theorem  stating that the derived length of any soluble subgroup of ${GL}_n({k})$ over any field $k$ is bounded in terms of $n$ only (see \cite[15.1.3]{rob2}). Thus there is an $r$-bounded number $d=d(r)$ such that  $G^{(d)}\leq C_G(M)$, so that $[G^{(d)}, M]=1$.

We can repeat the same argument  for the quotient group $G/M$  with the same number  $d=d(r)$. We thus  find  a minimal normal subgroup $M_2/M$ of rank at most $r$ such that $(G/M)^{(d)}$  is contained in $C_{G/M}(M_2/M)$, so that  $[G^{(d)},M_2]\leq M$.  In this way we inductively construct  a normal series of $G$
$$1=M_0\leq M=M_1\leq M_2\leq \cdots \leq M_s=G$$
with the property that $[G^{(d)}, M_i]\leq M_{i-1}$ for $i\geq 1$.
Putting $K_i=G^{(d)}\cap M_i$, we obtain a normal series of $G^{(d)}$  such that $[G^{(d)}, K_i]\leq K_{i-1}$ for any $i\geq 1$. This means that $G^{(d)}$ is nilpotent. Since the quotient group $G/G^{(d)}$ is soluble of $r$-bounded derived length, we obtain that, in particular,  $h(G) \leq d+1$, so that the Fitting height of $G$ is $r$-bounded.
\end{proof}

We now proceed with the proof of Theorem~\ref{t-sol} by induction on the Fitting height $h(G)$ of $G$. Recall that $[G,C]=G'$. If $h(G)=1$, then $G$ is nilpotent and so $G=C$, since $C$ is self-normalizing. Then the result follows from Lemma~\ref{last} (or even from \cite[Lemma~4.1]{agks} applied to every Sylow subgroup of $G=C$).

Assume  now that $h(G)>1$.  Set for brevity $F=F(G)$, so the Fitting height of $G/F$ is smaller than $h(G)$ and, by induction, the image of $G'$ in $G/F$ has $r$-bounded rank.

Let  $C_0= C\cap F$. We have $[G,C_0]\leq F$, and obviously  $I_G(C_{0}) \subseteq I_G(C)$, whence   $[G,C_0]$ has $r$-bounded rank by Lemma~\ref{last}. Therefore we can pass to the quotient $G/[G,C_0]$ and assume that $C_0$ is central in $G$. Since the centre $Z(G)$ is contained both in $F$ and in $C$, in the quotient $\bar{G}=G/Z(G)$ the intersection of the images $\bar{F}\cap \bar{C}$  is the image of $C_0$, which is trivial, since $C_0$ is central. Hence $C_{\bar{F}}(\bar{C})=1$,  since $\bar{C}$ is self-normalizing being a Carter subgroup of  the quotient.   We claim that $\bar{F}$ has $r$-bounded rank.

It is sufficient to show that every Sylow $p$-subgroup $P$ of  $\bar{F}$ has $r$-bounded rank.  Let  ${K}$ be the Hall $p'$-subgroup of $\bar{C}$. Then ${P}=[{P},{K}]$ by Lemma~\ref{l-3}, since $C_{{P}}(\bar{C})=1$.  Now the rank of $[{P},{K}]$ is $r$-bounded by Theorem~\ref{t-1-4}.  Thus, $\bar F=F/Z(G)$ has $r$-bounded rank.

Recalling that the image of $G'$ in $G/F$ has $r$-bounded rank, we obtain that  $G'Z(G)/Z(G)$ also has $r$-bounded rank.  By a theorem of Lubotzky and Mann \cite[Theorem~4.2.3]{LM} then $G''$ has $r$-bounded rank.  Thus we can pass to the quotient by $G''$ and assume that $G$ is metabelian. In particular, then  $G=CG'$ and $[G',C]$ is a normal abelian subgroup. Since  $[G',C]$ is generated by $r$ elements by hypothesis,  $[G',C]$ has rank at most $r$. Finally, in the quotient by $[G',C]$, the derived subgroup $G'$ becomes central in $G=G'C$,  so $G$ becomes nilpotent of class~2. By Lemma~\ref{last} (or Lemma~\ref{l-nil-rank}) we conclude  that $G'=[G,C]$ has $r$-bounded rank.
\end{proof}

\section{Commutators in finite simple groups}\label{s-semisimple}

In this section we prove Theorems~E and~F about commutators with a fixed element of prime order in a finite simple group, from which we then derive consequences about Carter subgroups. We restate Theorem~E about involutions. Recall that
``the rank'' always means ``the Pr\"ufer rank'', while the Lie rank of a simple group of Lie type is always used with the ``Lie'' qualifier.

\begin{theorem}  \label{t-com-inv}
 Let $r$ be a positive integer,  $G$ be a finite  simple group, and $x \in G$ an involution. Suppose that every subgroup generated by a subset of $I_G(x)$
can be generated by at most $r$ elements.  Then one of the following holds:
\begin{enumerate}
\item the rank of $G$ is $r$-bounded; or
\item  $G = PSL_2(q)$ with $q \equiv 3\,(\operatorname{mod}4)$.
\end{enumerate}
\end{theorem}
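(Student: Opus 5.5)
The strategy is to find, in every simple group outside the listed exceptions and of large rank, a subgroup generated by commutators $[g,x]$ that needs many generators. The basic tool is this: for any $g$ with $x^g$ commuting with $x$, the commutator $[g,x]=x^gx$ is an involution or trivial. More generally, if $E$ is an elementary abelian $2$-subgroup consisting of products $x\cdot x^{g}$ with $x^g\in C_G(x)$, then $E$ is generated by elements of $I_G(x)$. Hence it suffices to show the following. If $G$ has large rank and is not $PSL_2(q)$ with $q\equiv 3 \pmod 4$, then there are many pairwise commuting conjugates $x_1=x,x_2,\dots,x_m$ of $x$ such that the products $xx_i$ generate an elementary abelian group of rank about $m$. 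Equivalently, we want a large elementary abelian $2$-subgroup $E\ni x$ spanned by $G$-conjugates of $x$. Then $\langle xx_i\rangle$ has rank at least $m-1$, which forces $m\le r+1$. We then show that this $m$ grows with the rank of $G$, except in the excluded family.

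The case analysis follows the classification. Sporadic groups are trivial. For $A_n$, we write $x$ as a product of $2k$ transpositions and argue as follows. If $k$ is large, we pick a conjugate $y$ of $x$ commuting with $x$ so that $xy$ is an involution. We get more by taking many disjoint-support conjugates when the support is small: if $x$ has small support, we take $m\approx n/|\mathrm{supp}\,x|$ conjugates with disjoint supports. If the support is large, we instead use block constructions, with conjugates acting on pairs of $2$-cycles, to produce an elementary abelian subgroup of rank about $n/8$. For groups of Lie type of large Lie rank $d$, we use the standard embedding of $x$ in a Levi or subsystem subgroup. Every involution in characteristic $2$ is unipotent, and in odd characteristic it is semisimple. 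In either case there are $\gg d$ commuting conjugates (root-subgroup involutions, or commuting reflections/tori in a product of $SL_2$'s) generating an elementary abelian group of rank $\gg d$. So $d$ is $r$-bounded.

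It remains to bound the field size $q=p^f$ when $d$ is bounded. In characteristic $2$, $x$ lies in a unipotent radical and its conjugates under a torus give an elementary abelian group of order up to $q$. The products $x\cdot x^{t}$ for $t$ in the torus are commutators $[t,x]$ up to inversion, and they span an $\mathbb F_2$-space of dimension $\gg f$, so $f$ is bounded. In odd characteristic, we use a subgroup $SL_2(q)$ or $PSL_2(q)$ containing $x$ (or its image) and look at Sylow $p$-subgroups. If $x$ inverts a root subgroup $U\cong \mathbb F_q^+$, i.e., normalizes $U$ acting as $-1$, then $[u,x]=u^{-2}$ runs over all of $U$. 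This gives an elementary abelian subgroup of rank $f$, so $f$ is bounded, and Lemma~\ref{l-rlt} then bounds the rank.

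The main obstacle is $PSL_2(q)$ itself and the small-rank groups where no such inverted root subgroup exists. In $PSL_2(q)$ with $q\equiv 1\pmod 4$, the involution lies in the split torus and normalizes (inverts) a root subgroup as above, so $f$ is bounded. When $q\equiv 3 \pmod 4$, the involution lies in the nonsplit torus, and one checks that commutators generate only dihedral, cyclic and small subgroups, which is why this case is exceptional. For other small-rank groups ($PSL_3$, $PSU_3$, ${}^2G_2$, $G_2$, etc.), I would check that every involution class has a representative normalizing some root subgroup, or contains a $PSL_2(q)$ with $q\equiv1\pmod4$ or a $PSL_2(q^2)$ in which the involution is split. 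The quadratic-field trick ($q^2\equiv 1\pmod 4$) should handle most cases, but this verification is where the real work lies.
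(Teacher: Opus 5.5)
There is a genuine gap, and it is exactly the step you defer: bounding the field size $q=p^f$ in odd characteristic for groups of Lie type other than $PSL_2(q)$.

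Your main mechanism cannot do this. A large elementary abelian $2$-subgroup spanned by conjugates of $x$ does not exist in this regime, because in odd characteristic the $2$-rank of a group of Lie type is bounded in terms of its Lie rank. So the sufficient condition in your first paragraph is simply false here; for example, $PSL_3(q)$ with $q$ odd has unbounded rank but $2$-rank $2$. Everything therefore rests on the ``inverted root subgroup'' mechanism.

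That mechanism is only stated conditionally, and its hypothesis genuinely fails. In $PSL_4(q)$ with $q\equiv 3\pmod 4$, take the involution whose preimage $y\in SL_4(q)$ satisfies $y^2=-I$. Then $y$ has no eigenvector on $\mathbb{F}_q^4$, so it normalizes no transvection group, i.e.\ no root subgroup at all. In twisted groups root subgroups need not even be copies of $\mathbb{F}_q^+$; in $PSU_3(q)$ the root subgroup is nonabelian of order $q^3$.

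Your fallback, viewing $x$ as a split involution in some $PSL_2(q^2)$, happens to work for this example. But you have not shown that some such configuration exists for every involution class of every type. Nor is this a finite ``small-rank'' check. Once $d$ is bounded by some $D(r)$, you still must bound $f$ for all types of Lie rank at most $D(r)$, so you need an argument that is uniform across all classical families and the exceptional types.

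The missing idea, which the paper uses, is to replace the root subgroup by the unipotent radical of a parabolic subgroup.
\begin{itemize}
\item If $G\neq PSL_2(q)$ and $p$ is odd, a lift of $x$ is not regular semisimple. For classical groups this is because its square is scalar, so it has too few distinct eigenvalues; for exceptional groups it is read off the known involution centralizers.
\item Hence $x$ centralizes a nontrivial unipotent element and lies in a proper parabolic subgroup $P$ with unipotent radical $Q$.
\item Since $C_P(Q)\le Q$ and $x$ is a $p'$-element, $x$ acts nontrivially on $Q$. By coprime action it then acts nontrivially on $Q/\Phi(Q)$, which carries an $\mathbb{F}_q$-structure preserved by $x$.
\item The image of $[Q,x]=\langle I_Q(x)\rangle$ in $Q/\Phi(Q)$ is therefore a nonzero $\mathbb{F}_q$-subspace, namely the $(-1)$-eigenspace of $x$. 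It needs at least $f$ generators, so $f\le r$, uniformly in the type.
\end{itemize}
In the $PSL_4(q)$ example, $y$ stabilizes an $\mathbb{F}_{q^2}$-line $W$ and has a nonzero $(-1)$-eigenspace on $\mathrm{Hom}(V/W,W)$, so this argument covers it directly.

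The rest of your outline is broadly in line with the paper: alternating groups, the Lie-rank bound via elementary abelian $2$-subgroups, and characteristic $2$ via a torus. In characteristic $2$, though, you should work in a $T$-invariant section of a unipotent subgroup rather than assume $x$ lies in an abelian $T$-invariant subgroup. Your discussion of $q\equiv 3\pmod 4$ is unnecessary, since that case is an allowed conclusion of the theorem.
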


\begin{proof}  We can ignore the sporadic simple groups.

Suppose that $G=A_n$ is an alternating group on $n$ symbols.
Let $m=\lfloor n/4\rfloor $. We can assume without loss of generality that $x=(12)(34)\cdots (4k-1\;4k)$ is the product of $2k$ transpositions with support $1,2,\dots,4k$. The commutators with $3$-cycles
$$\big[x, \;(4l-3\;4l-2\;4l-1)\big]=(4l-3\;4l-1)(4l-2\;4l) $$
for $1\leq l\leq k$ are  involutions in $I_G(x)$. These $k$ involutions have disjoint supports and therefore commute. If $k<m$, the commutators
$$\big[x, \; (1\;4s-3) (2\;4s-2) (3\;4s-1)(4\;4s)\big]=(12)(34)(4s-3\;4s-2)(4s-1\;4s)$$
for $k<s\leq m$ are also  involutions in $I_G(x)$. These $m-k$ involutions  commute with each other and with the first $k$ involutions. Altogether, these involutions generate an elementary abelian $2$-group of rank $m$. Hence $m\leq r$ by hypothesis, so $n\leq 4r+3$ and even the order of $G=A_n$ is $r$-bounded.

So we may assume that $G$ is a finite simple group of Lie type of Lie rank $d$ over the field $\mathbb{F}_{p^e}$ for some prime $p$. Recall that the rank of $G$ is bounded in terms of $e$ and $d$ by Lemma~\ref{l-rlt}.

First consider the case $G = PSL_2(p^e)$ for some prime $p$.   Let $U$ be a Sylow $p$-subgroup of order $p^e$ and let $B=N_G(U)$. If $p=2$, then we may assume that $x \in U$ and then $[x,B]=U$ is a  subgroup of rank $e$ generated by elements of $I_B(x)$. Then $e\leq r$ by hypothesis, whence the rank of $G$ is $r$-bounded.
If $p^e \equiv 1\,(\operatorname{mod}4)$,  then we can assume that $x \in B$ and then $I_B(x) = U$ and the same argument applies. The case $p^e \equiv 3\,(\operatorname{mod}4)$ is covered by part (2) of the theorem.

We next consider the remaining classical groups, that is,  $PSL_n(p^e)$ for $n > 2$,  $PSU_n(p^e)$ for $n >2$,  $PSp_{2n}(p^e)$ for $n > 1$, and $P\Omega_n ^{\varepsilon}(p^e)$ for $n > 6$.
We need to show that $n$ and $e$ are bounded in terms of $r$.   It is more convenient to work in the corresponding linear group, which we denote again by $G$.  Let $V$ be the natural module for $G$. With abuse of notation we denote by the same letter $x$ some $2$-element that is a pre-image of $x$. In characteristic $p=2$, this pre-image is an involution and is unique.

First suppose that $p=2$. It is sufficient to produce a $2$-subgroup $Q$ containing $x$ such that $[N_G(Q), x]\Phi(Q)/\Phi(Q)$ has rank at least $Ken^2$ for some fixed constant $K$. Since $[N_G(Q), x]$ is generated by elements of $I_G(x)$, we would then have $Ken^2\leq r$ by hypothesis, whence both $n$ and $e$ would be $r$-bounded, and so would be the rank of $G$.

If $G=SL_n(2^e)$, we can take for $Q$  the unipotent radical of the stabilizer of a subspace of dimension $n/2$ or $(n-1)/2$.  If $G=SU_n(2^e)$ or $Sp_{2n}(2^e)$, we can take for $Q$ the unipotent radical of the stabilizer of a totally singular subspace of maximal dimension.   In the case of~$\Omega_n ^{\varepsilon}(2^e)$,   we take $Q$ to be the unipotent radical of the stabilizer of a totally singular subspace $W$ of maximal dimension contained in the fixed space of $ x$; the dimension of $W$ will be close to half the dimension of the underlying space $V$ due to the choice of the pre-image~$x$. See \cite{GR1} for the elementary proof that such a $Q$ exists. Thus, both $n$ and $e$ are $r$-bounded in the case of classical groups in characteristic $p=2$.

Staying with classical groups over a field $\mathbb{F}_{p^e}$, we now assume that $p \ne 2$.   Recall that $V$ is the natural module of $G$ with $\dim V = n > 2$.   Since $G \ne SL_2$,  any lift of an involution is not semisimple regular and so it commutes with some unipotent element. Thus,  $x$ is contained in some maximal parabolic subgroup. Hence $x$ acts nontrivially on its unipotent radical $Q$ and so also on $Q/\Phi(Q)$. Since $x$ preserves the  $\mathbb{F}_{p^e}$-structure, it follows that $[x,Q]\Phi(Q)/\Phi(Q)$ requires at least $e$ generators and so $e \le r$.

It remains to show that $n$ is also $r$-bounded. First suppose that $x$ lifts to an element of order a multiple of $4$.  Then (over a quadratic field extension), $x$ has two distinct eigenvalues of the same multiplicity.  In particular, $n=2m$ is even.   Excluding the orthogonal case, we see that $x$ preserves an orthogonal direct sum of 2-dimensional nondegenerate spaces, on each of which  $x$ acts as  a noncentral element. On each of these 2-dimensional spaces, there is a commutator $[x,y]$ of  order $2$
(by inspection, or by Glauberman's $Z^*$-theorem). Hence there is an elementary abelian $2$-group of rank $m$ generated by elements of $I_G(x)$, and so $m \le r$ as required. If $G$ is orthogonal, we can pass to a subspace of codimension at most $2$ and assume that the group has $+$ type and $n=4m$.   We apply the same argument using nondegenerate spaces of dimension $4$ and again conclude that $m \le r$.

Now suppose that all eigenvalues of $x$ are $\pm 1$.    Let $m$ be the dimension of the largest eigenspace of $x$ (so $m \ge n/2$).   By passing to a subspace of dimension at
most $2$, we may assume that $m$ is even and contains a totally singular subspace of dimension $m/2$. (no condition is required for $SL$).  Then $V = W \oplus M \oplus W'$ as an $x$-module
where $\dim W = \dim W= m/2$ are totally singular, while $x$ is a scalar on $W \oplus W'$, and $x$ is a different scalar on $M$.
Since $x$ is not a scalar, we have $M \ne 0$.  Note that $W \otimes M$ is an $x$-invariant
quotient of the unipotent radical of the stabilizer of $W$.  Then $[x, W \otimes M] = W \otimes M$ and the rank of $W \otimes M$ is at least the rank of $W$, which  grows with
$n$, and the result follows in this case.

Finally we consider the case where $G$ is an exceptional group over a field $\mathbb{F}_{p^e}$.   Since the Lie rank of $G$ is bounded, it suffices to show that $e$ is $r$-bounded. When $p=2$, let   $ U$ be  a  Sylow $2$-subgroup of $G$ containing $x$, and let  $N_G(U)=UT$ for a torus $T$. Then $U$ has a $T$-invariant filtration in which each quotient $Q$ is a vector space over $\mathbb{F}_{p^e}$ such that $Q=[Q,T]$ (for $e > 1$).
Since the image $\bar x$ of $x$ in one of these quotients  is nontrivial, it follows that the rank of $[T,\bar x]$ as an elementary abelian $2$-group is at least $e$, whence $e\leq r$ by hypothesis. As a result, the rank of $G$ is $r$-bounded  in this case.

It remains to consider the case where $G$ is an exceptional group over $\mathbb{F}_{p^e}$ with $p$ odd.
The conjugacy classes of involutions and their  centralizers are  given in \cite{Lu}, and any involution commutes with a nontrivial unipotent subgroup.
Hence $x$ is contained in some maximal parabolic subgroup and so normalizes but does not centralize the unipotent radical of this parabolic subgroup.
Then $e$ is $r$-bounded as above.
\end{proof}

We now consider the case where $x$ is a fixed element of odd prime order $p$. Although we only
need the cases $p \le 3$ for our application to Carter subgroups, the general Theorem~F is proved here for all~$p$, with a bound for the rank that does not depend  on $p$.  The proof is fairly straightforward in the case where the element $x$ is contained in a parabolic subgroup.   If $x$ is regular semisimple, we use character theory and the formula for products of conjugacy classes to show
that $I_G(x) \cap U$  is large for some maximal unipotent subgroup $U$.  We then show that the subgroup generated by this set needs many generators if the rank is unbounded. One can give  a different proof in the case where $x$ is regular using Steinberg's description of regular elements \cite{St}. We restate Theorem~F.

\begin{theorem}\label{t-com-odd}
  Let $p$ be an odd prime, and $r$ a positive integer.  Let $G$ be a finite simple group of order divisible by $p$, and $x\in G$ an element of order $p$.
  Suppose that every subgroup generated by a subset of $I_G(x)$
can be generated by at most $r$ elements.  Then one of the following holds:
\begin{enumerate}
\item the  rank of $G$ is $r$-bounded; or
\item  $G=PSL_2(q)$  with
$q \equiv -1\,(\operatorname{mod} p)$.
\end{enumerate}
\end{theorem}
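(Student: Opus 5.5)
We follow the pattern of the proof of Theorem~\ref{t-com-inv}: ignore sporadic groups, treat alternating groups by an explicit construction, and for groups of Lie type over $\mathbb{F}_{q}$, $q=\ell^e$, bound the Lie rank $d$ and $e$ in terms of $r$. Then Lemma~\ref{l-rlt} applies. The bounds must not depend on $p$.

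\emph{Alternating groups.} For $G=A_n$ we may write $x$ as a product of $k$ disjoint $p$-cycles. We build many commuting elements of $I_G(x)$ with pairwise disjoint supports. Take a fixed $p$-cycle $c$ of $x$ on a set $\Delta$ and a point $a\in\Delta$. Conjugating by a $3$-cycle $g$ that moves $a$ to a fixed point of $x$ gives a commutator $[g,x]$ supported on a set of bounded size. Several cycles of $x$ can be used in this way, and fixed points can also be moved into cycles by suitable even permutations. This should give an elementary abelian $2$- or $3$-subgroup generated by elements of $I_G(x)$ whose rank grows linearly in $n$. If $x$ has few cycles and $p$ is large, we instead use commutators inside a single long cycle: for a $p$-cycle $c$ and an involution $t$ with $c^t=c^{-1}$ we have $[t,c]=c^{2}$. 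We then apply the analogue of the displayed construction with products of transpositions. Either way $n$ is $r$-bounded.

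\emph{Parabolic case for groups of Lie type.} If $p$ equals the defining characteristic $\ell$, then $x$ is unipotent and lies in a Sylow $\ell$-subgroup $U$. With $B=N_G(U)=UT$ we argue as in the $p=2$ case above, using root-group filtrations $Q$ of $U$ with $Q=[Q,T]$. More generally, suppose $x$ centralizes a nontrivial unipotent element, so that $x$ lies in a proper parabolic subgroup $P=QL$ and acts nontrivially on $Q/\Phi(Q)$. Then $[Q,x]\Phi(Q)/\Phi(Q)$ is a nonzero $\mathbb{F}_{q}$-space, so $e\le r$. For classical groups we bound $n$ as in Theorem~\ref{t-com-inv}. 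Decompose $V$ into eigenspaces of $x$ over $\overline{\mathbb{F}}_q$. If some eigenvalue has multiplicity at least about $n/(2p)$, we are not done directly, so we instead use a large $x$-invariant totally singular subspace $W$ on which $x$ acts as a scalar. Then $W\otimes M$, with $M$ a nontrivial eigen-piece, is an $x$-invariant section of the unipotent radical, and $[x,W\otimes M]$ has rank at least $\dim W$. When $x$ has many distinct eigenvalues we split $V$ into many small $x$-invariant nondegenerate pieces. On each piece we find a nontrivial commutator that is an $\ell$-element, and these pieces together give a large abelian subgroup generated by elements of $I_G(x)$.

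\emph{Regular semisimple case (main obstacle).} The hardest case is $x$ regular semisimple, for example $x$ in a Coxeter torus, where $x$ lies in no proper parabolic subgroup. Here we use the class-multiplication formula. The number of solutions of $g^{-1}x^{-1}g\cdot x=u$ equals
\[
\frac{|G|}{|C_G(x)|^2}\sum_{\chi}\frac{|\chi(x)|^2\,\overline{\chi(u)}}{\chi(1)}.
\]
We apply it to regular unipotent elements $u$ (and more generally to many elements of a maximal unipotent subgroup $U$). For regular semisimple $x$, the values $|\chi(x)|$ are bounded in terms of the Weyl group by Deligne--Lusztig theory, and $|\chi(u)|/\chi(1)$ is small for nontrivial $\chi$. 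This should show that the trivial character dominates, so that $I_G(x)\cap U$ contains all regular unipotent elements of $U$, or at least a large proportion of $U$. A subgroup generated by such a large subset of $U$ has image in $U/\Phi(U)$ of rank about $de$; alternatively, it contains a large elementary abelian root subgroup. This gives $de\le Kr$.

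The estimates break down exactly for $PSL_2(q)$ with $x$ in the nonsplit torus, that is $p\mid q+1$. There $I_G(x)$ generates only dihedral-type and small subgroups, which is the exception in (2). If $p\mid q-1$, then $x$ lies in a Borel subgroup, so we are in the parabolic case and $I_B(x)=U$. Exceptional groups have bounded $d$, so for them only $e$ needs bounding, by the same two mechanisms. The remaining difficulty is making the character estimates uniform in $p$ and in the twisted types.
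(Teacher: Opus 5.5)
Your outline follows the paper: alternating groups, then $x$ in a parabolic subgroup, then $x$ regular semisimple via the class-multiplication formula with regular unipotent $u$. However, the decisive step in the regular semisimple case fails as stated. The expression you display counts pairs $(a,b)\in (x^{-1})^G\times x^G$ with $ab=u$. It does not count elements $g$ with $[g,x]=u$; there are either $0$ or $|C_G(x)|$ of those. Since $I_G(x)$ is not a union of conjugacy classes, positivity of the count only shows that each regular unipotent class meets $I_G(x)$.

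Fixing $a=x^{-1}$, as the paper does with triples, gives only about $|x^G||u^G|/|G|$ regular unipotent elements in $I_G(x)$. That is a proportion of roughly $1/|C_G(x)|\approx q^{-d}$ of the class. Pigeonholing over the $|G:B|$ Sylow $s$-subgroups ($s$ is your $\ell$, $q=s^e$) then gives some $U$ with $|I_G(x)\cap U|$ of order $|B|/(|C_G(x)||C_G(u)|)$, roughly $|U|/q^{d}$; the paper uses the cruder $|U|/q^{2d}$. So $I_G(x)\cap U$ is not a large proportion of $U$, and the subgroup it generates may have index comparable to $|U:\Phi(U)|$. Nothing then follows about its image in $U/\Phi(U)$. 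A large elementary abelian subgroup inside it contradicts nothing either, since the hypothesis constrains only subgroups generated by subsets of $I_G(x)$, not their subgroups.

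The missing idea is the paper's comparison with the size of $r$-generated subgroups:
\begin{itemize}
\item To bound $d$: $U/\gamma_4(U)$ has order about $q^{3d}$, class $3$ and exponent $s$ (or $s^2$). So the image of an $r$-generated subgroup has order at most $s^{f(r)}$, hence index greater than $q^{2d}$ once $d\gg r$.
\item To bound $e$ for fixed type: an $r$-generated subgroup of $U$ has order bounded in terms of $r,s,d$, while $|I_G(x)\cap U|\geq q/(d+1)$ because $|U|\geq q^{d+1}$.
\end{itemize}
Your character input is also too weak for bounding $d$. A bound for $|\chi(x)|$ in terms of the Weyl group can grow factorially in $d$, whereas $\sum_{\chi\neq 1}\chi(1)^{-1}$ decays only exponentially in $d$ for fixed $q$. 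The paper uses $|\chi(g)|\leq\chi(1)^{1/4}$ for regular $g$ in large rank, applied to both $x$ and $u$, together with $\sum_{\chi\neq 1}\chi(1)^{-1/4}\to 0$ as $d\to\infty$. Constant bounds on $|\chi(x)|$ and $\sum_i\chi(u_i)$ are used only once the type is fixed.

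The other cases have holes too.

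In $A_n$, a nontrivial element $[g,x]=(x^{-1})^g x$ of $I_G(x)$ always moves a point of $\operatorname{supp}(x)$. If it fixed $\operatorname{supp}(x)$ pointwise, $(x^{-1})^g$ would agree with $x^{-1}$ on $\operatorname{supp}(x)$ and hence equal it. So at most $|\operatorname{supp}(x)|$ nontrivial elements of $I_G(x)$ have pairwise disjoint supports, and for $x$ a single $3$-cycle your construction bounds nothing. The paper instead uses commuting but overlapping elements $x^{-1}x^{g_s}$, where $g_s$ moves a cycle of $x$ onto the $s$-th block of $p$ fixed points. These give an elementary abelian $p$-subgroup of rank about $n/p$. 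It then bounds $p$ separately via the roughly $p/4$ independent $3$-cycles $[x,(a\;b\;c)]=(a\;b\;d)$, for consecutive points $a,b,c,d$ of one $p$-cycle.

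In the classical parabolic case you give no bound for $n$ when $p=s$, where there is only the eigenvalue $1$. The paper uses the Jordan block decomposition there, or $W\otimes M$ with $W\subseteq V_0$ when $\dim V_0>n/2$.

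For $p\neq s$ there are two problems:
\begin{itemize}
\item Your threshold $n/(2p)$ with rank at least $\dim W$ only bounds $n$ in terms of $p$ and $r$, not $r$ alone.
\item Eigenspaces over $\overline{\mathbb{F}}_q$ give no $\mathbb{F}_q$-subspace on which $x$ is scalar when $x$ has no eigenvalue in $\mathbb{F}_q$.
\end{itemize}
The paper takes the lift maximizing $\dim V_0$ and shows that all nontrivial $x$-irreducible subspaces have equal dimension. It then proves $\dim C_Q(x)\leq\dim W$ (or $\leq t\dim W$), so that $[Q,x]$ grows with $n$ independently of $p$.
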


\begin{proof}  We can ignore the sporadic simple groups.

First suppose that $G=A_n$ is an alternating group on $n$ symbols. Let $m=\lfloor n/p\rfloor $. We can assume without loss of generality that $x=(12\dots p)\cdots ((k-1)p+1\dots kp)$ is the product of $k$ cycles of length $p$  with support $1,2,\dots ,kp$ for some $k\leq m$. An argument similar to that for $p=2$ in the beginning of the proof of Theorem~\ref{t-com-inv} shows that there are elements in $I_G(x)$ that generate an elementary abelian $p$-group of rank $m$, so that $m\leq r$ by hypothesis and therefore $n\leq pr+p-1$. We claim that in addition $p$ is $r$-bounded.  The commutators $[x,\, (4k+1\; 4k+2\; 4k+3)]=(4k+1\;4k+2\; 4k+4)$
 for $k=0,1,\dots ,\lfloor p/4\rfloor-1$ belong to $I_G(x)$. Since these 3-cycles are independent, they generate an elementary abelian 3-group of rank $\lfloor p/4\rfloor$, whence $p$ is $r$-bounded by hypothesis, as claimed. As a result, $n$ is also $r$-bounded, so that even the order of $G=A_n$ is $r$-bounded.

So we may assume that $G$ is a finite simple group of Lie type of Lie rank $d$ over a  field~$\mathbb{F}_{s^e}$ in characteristic~$s$. Recall that the rank of $G$ is bounded in terms of $e$ and $d$ by Lemma~\ref{l-rlt}.
It is more convenient to work in the corresponding linear group, which we denote again by~$G$; let $V$ be the natural module of dimension $n$  for $G$. With abuse of notation we denote by the same letter $x$ some $p$-element that is a pre-image of $x$. In characteristic $p$, the lift still has order $p$. In other cases we choose $x$ to be a lift (of our element of order $p$) with maximal dimension $\dim V_0$ of the fixed-point subspace $V_0$ of $x$.
\medskip
\reversemarginpar

\paragraph*{\bf Case $\boldsymbol{p=s}$} Suppose  that $p=s$.  We first show that $e$ is $r$-bounded.   By \cite[Lemma 2.1]{GSaxl}, either the twisted Lie rank of $G$ is $1$, or $x$ is contained in a maximal parabolic subgroup $P$ and is not contained
in its unipotent radical $Q$.  Then we can pass to $P/Q$ and so by induction
assume that  the twisted Lie rank of $G$ is $1$.   Let $B=TU$ be the Borel subgroup containing $x$ with
$T$ a torus and $U$ the unipotent radical, so that $x\in U$.    We can choose $T$-invariant unipotent subgroups $U_1$, $U_2$ with $U_2$ a proper normal subgroup of $U_1$ such
that $x\in U_1\setminus U_2$.
Then $I_T(x)$ generates a nontrivial $T$-submodule of $U_1/U_2$, which
is a vector space
over $\mathbb{F}_{s^e}$.  Thus $e$ is $r$-bounded.

Staying in the case $p=s$, we next show that $d$ (or, equivalently, $n$) is $r$-bounded.  We may assume that $d >8$ and so assume that $G$ is a classical group.

We  decompose $V$ into an orthogonal
direct sum of $x$-invariant subspaces $V_i$, $0  \le i \le t$, such that the element $x$ acts trivially on $V_0$ (possibly, $V_0=0$) and  $x$ preserves no proper nondegenerate
subspace of $V_i$ for $i > 0$.  Since $x$ is unipotent, $x$ acts on each  $V_i$ either as a single Jordan block or as two Jordan blocks of the same size (see \cite[Chapter~3]{LS}), which may be different for different $V_i$
(for $G$ of type $SL$ we assume that all subspaces are  nondegenerate and $x$ has a single Jordan block on each $V_i$);
cf.   \cite[Chapters~3,~4]{LS}.
Let $m_i = \dim V_i$, and let $x_i$ be the projection of $x$ on $V_i$.    If $B_i$ is a Borel subgroup of the isometry group on $V_i$ (with $x_i \in B_i$), then
there is a subgroup generated by elements of  $I_{B_i}(x_i)$ that needs $m_i-1$ generators (if $x_i$ is a single Jordan block) or $m_i/2-1$ generators (if $x_i$ has  two Jordan blocks).  As a result, there is a $p$-subgroup generated by elements of $I_G(x)$ that requires at least $\big(\sum_{i=1}^t  m_i\big)/4$ generators, and this number must be at most $r$ by hypothesis.   Thus, the result follows unless, say,  $\sum_{i=1}^t m_i < n/2$ and then  $\dim V_0 > n/2$.
By passing to a subspace of codimension at most $2$, we may assume that $V_0$ is the direct sum of two complementary totally isotropic spaces $W \oplus W'$  (in the case of $SL$ we just take $W=V_0$).  Without loss of generality, we have  $\dim W \ge n/4$.  Let $P$ be the maximal parabolic subgroup stabilizing~$W$.   Then $x$ is in the Levi subgroup of $P$ and acts trivially on $W$ and nontrivially on $M:=\bigoplus_{i>0} V_i$.      The unipotent radical $Q$ of $P$ has an elementary abelian quotient    that
is isomorphic to $W\otimes M$ as an $\mathbb{F}_{s^e}\langle x\rangle$-module.     In particular, some elements of $I_Q(x)$ generate a subgroup requiring at least $n/4$ generators, so $n/4\leq r$, and the result follows.

\medskip
We now assume that $s \ne p$.  The arguments are different for the cases where  $x$ is contained in some parabolic subgroup, or not.

\medskip

\paragraph*{\bf Case  $\boldsymbol{p\ne s}$ and $\boldsymbol x$ is in a parabolic
subgroup} Suppose that $x$ is contained in a parabolic
subgroup $P$ with unipotent radical $Q$.   Since $F^*(P)=Q$, the element $x$ acts nontrivially on $Q$ and so on $Q/\Phi(Q)$ (since the order of $x$ is coprime to $s$).  Since we may view $Q/\Phi(Q)$ as an $\mathbb{F}_{s^e}\langle x\rangle$-module, we have $|[x,Q]| \ge s^e$ and so $e\leq r$.

Thus, in this case of $x$ in a  parabolic
subgroup, it remains to bound the Lie rank $d$, or equivalently, $n=\dim V$. We may assume that $d >8$ and so assume that $G$ is a classical group.

First assume that the fixed space $V_0$ of $x$ contains a nonzero isotropic vector (or any nonzero vector in the case of $SL$).   By passing to
a subspace of codimension at most $2$, we may assume that $V_0 = W \oplus W'$ with $W$ and $W'$ totally isotropic (if $G=SL$, take $W=V_0$).  Let $M$ be an $x$-invariant complement to $W$ in $W^{\perp}$.   Let $P$ be the parabolic subgroup stabilizing $W$, and  $Q$  the unipotent radical of $P$.  Then a quotient $\bar{Q}$ of $Q$ is isomorphic to $W\otimes M$ as  an $\mathbb{F}_{s^e}\langle x\rangle$-module. Since $[x,M]=M$, we see that there exists a subgroup generated by elements of $I_G(x)$ generating a group requiring $e(\dim W )(\dim M) $ generators.  Since $\dim W + \dim M \ge \dim V/2$,
the result follows.

If $V_0$ has no isotropic vectors, then $\dim V_0 \le 2$ and so by passing to an $x$-invariant subspace of $V$ of codimension at most $2$ we can assume that $V_0=0$. We claim that all nontrivial irreducible
 $x$-invariant subspaces of $V$ have
the same dimension.
Indeed, let $x^p=\alpha I$. If $\alpha\ne \beta ^p$ for any $\beta\in \mathbb{F}_{s^e}$, then the polynomial $X^p-\alpha$ is irreducible over $\mathbb{F}_{s^e}$ and then all $x$-irreducible subspaces have dimension $p$. If $\alpha= \beta ^p$ for  $\beta\in \mathbb{F}_{s^e}$, and $ \mathbb{F}_{s^e}$ contains a primitive $p$-th root of unity, then $x$ is diagonalizable (over the same field). If $\alpha= \beta ^p$ for  $\beta\in \mathbb{F}_{s^e}$, but  $ \mathbb{F}_{s^e}$ does not contain a primitive $p$-th root of unity, then $\beta$ is a unique $p$-th root of $\alpha $ in $\mathbb{F}_{s^e}$ and $X^p-\alpha=(X-\beta)f_1(X)\cdots f_k(X)$, where the $f_i(X)$ are  irreducible factors, all having the same degree $d$, over  $\mathbb{F}_{s^e}$; then all irreducible subspaces either have dimension 1 or dimension~$d$. Then $\beta ^{-1}x$ is another lift with all 1-dimensional irreducible subspaces in $V_0$. By the choice of $x$ with maximal $\dim V_0$, in this case we must have $x^p=1$ and all nontrivial irreducible subspaces of $x$ are of dimension $d$.

\normalmarginpar

Thus, all nontrivial irreducible
$x$-invariant subspaces of $V$ have
the same dimension. If this dimension is $1$,  then we argue as in the previous theorem for the case of an involution.
Thus, we may assume that any nontrivial $x$-invariant space has
dimension at least $2$.  Recall that we need to  prove
that
$d$ (or $n$) is bounded in terms of $r$.

Since $x$ is contained in  a parabolic subgroup,  $x$ preserves some totally isotropic subspace~$W$,  and we may assume that $x$ acts irreducibly (and nontrivially) on $W$.
Then as above we can write
$V=W \oplus M \oplus W'$, where  $M$ is an $x$-invariant complement to $W$ in
$W^{\perp}$ and $W'$ is another isotropic $x$-invariant subspace
(it is possible that $M=0$).
Let $P$ be the stabilizer of $W$,  and  $Q$ the unipotent radical of $P$.
\reversemarginpar

  If $M=0$, then   $W'$ is the dual of $W$ (or twisted in the case of $SU$)  as $\mathbb{F}_{s^e}\langle x\rangle$-modules, and $Q$  is  isomorphic as an $\mathbb{F}_{s^e}\langle x\rangle$-module to the
submodule of
the tensor product $W\otimes W'$, which is the whole of $W\otimes W'$ in case $G=SL$, or given by the symmetric product, the alternating product, or Hermitian matrices
(in the symplectic, orthogonal, or unitary cases, respectively).  We claim that $\dim C_{ Q}(x) \leq  \dim W$. Indeed, after extending
scalars to the algebraic closure, we have $W=\bigoplus U_i$ for $1$-dimensional
distinct irreducible  $\mathbb{F}_{s^e}\langle x\rangle$-modules $U_i$, while $W'=\bigoplus U_i^*$. Then
$$
\dim C_{ Q}(x)\leq \dim C_{W\otimes W'}(x)=\dim \bigoplus C_{U_i\otimes U_i^*}(x)=\dim W,
$$
since $\dim C_{U_i\otimes U_i^*}(x)=1$ and $C_{U_i\otimes U_j^*}(x)=0$ for all $i\ne j$.  Since $|x|$ is coprime  to $|Q|$, it follows that $\dim [ Q,x] \geq  \dim Q - \dim W$, and since $\dim  Q$ is roughly at least $(\dim W)^2=(\dim V/2)^2$, while  the rank of $[ Q,x]$ is at most $r$ by hypothesis, we obtain that $n=\dim V$ is bounded in terms of~$r$.

Suppose that $M\ne 0$.    Then $Q$ has an $x$-invariant quotient $\bar
Q$  isomorphic to
$W \otimes  M$ as an $\mathbb{F}_{s^e}\langle x\rangle$-module.  Note that $M=\bigoplus_{i=1}^t U_i$
is a direct sum of $t$ irreducible $\mathbb{F}_{s^e}\langle x\rangle$-modules $U_i$, each  of dimension equal to $\dim W$, for some $t\geq 1$.    Since $V= W \oplus M \oplus W'$, we have $n=\dim V = 2 \dim W + \dim M=(t+2)\dim W$.  In
particular, at least one of the numbers $t$ and $\dim W$ grows with $n$.
As above, decomposing $W$ and $U_i$ over algebraic closure of the field into sums of 1-dimensional irreducible submodules, which are all distinct for each of $W$ and~$U_i$, we obtain $\dim C_{W\otimes U_i}(x)  \leq \dim W$ for each $i$. Hence,
$$
\dim C_{\bar Q}(x)= \dim C_{W \otimes M}(x) \leq t \dim W.
$$  As a result,
$\dim [\bar Q,x]\geq   \dim \bar Q -t\dim W=t(\dim W)^2-t\dim W$,  and this value grows with~$n$. Since the rank of $[\bar Q,x]$ is at most $r$, we obtain that $n$ is $r$-bounded.

\medskip

\paragraph*{\bf Case  $\boldsymbol x$ is not in a parabolic
subgroup}

Finally assume that $x$ preserves no isotropic subspace (in particular $p \ne s$)  and thus $x$ is not contained in any parabolic subgroup. This implies that
$x$ is a regular semisimple element.

If $G = PSL_2(s^e)$, then $s^e \equiv - 1\, (\operatorname{mod} p)$, since $x$ is not contained in a Borel subgroup, and the result holds as  part (2) of the theorem .

We proceed with other groups of Lie type; let $q=s^e$ to lighten some formulae.

We first show that the Lie rank is $r$-bounded; recall that we are considering the case where  $x$ is not in any parabolic subgroup.  We may assume that $G$ is a classical
group.

Consider the set of triples $(a,b,c)$ \marginpar{\bf C4} of elements of $G$ with
$abc=1$ such that $a$ is  conjugate to $x^{-1}$, \ $b$ is conjugate to
$x$, and $c$ is conjugate to a fixed regular unipotent element $u$.

By \cite[Theorem~7.2.1]{serre},
the number $N$ of such triples
is equal to
\begin{equation}\label{e3}
  \frac{1}{|G|}\cdot |x^G|\cdot |x^G|\cdot |u^G|\cdot  \sum_{\chi}\frac{\chi (x^{-1})\chi (x)\chi(u)}{\chi(1)}
\end{equation}
with summation over all irreducible characters of $G$.  Let
$$
T = \frac{1}{|G|}\cdot |x^G|\cdot |x^G|\cdot |u^G|.
$$

 Fix any $\epsilon > 0$.
 By \cite[Theorem 1.3]{GLT},
 for sufficiently large rank,  $|\chi(g)|
\le \chi(1)^{1/4}$ for any regular element $g\in G$ and any   irreducible character~$\chi$.
Therefore from \eqref{e3} we obtain
$$N \ge  T \left(1-\sum_{\chi\ne \mathbbm{1}_G } \frac{|\chi (x^{-1})\chi (x)\chi(u)|}{\chi(1)}\right)\geq T \left(1-\sum_{\chi\ne \mathbbm{1}_G } \chi(1)^{-1/4}\right).$$
By  \cite[Theorem 1.2]{LSh} we have $\sum_{\chi\ne \mathbbm{1}_G } \chi(1)^{-1/4}\to 0$ as $|G|\to \infty$ for large enough rank of $G$.
In fact, we need a more precise assertion that $\sum_{\chi\ne \mathbbm{1}_G } \chi(1)^{-1/4}\to 0$ as $d\to \infty$. Although not explicitly mentioned in the statement of \cite[Theorem 1.2]{LSh}, this fact is actually proved in \cite[Theorem 1.2]{LSh}, where it is shown that for any $t>0$, for large enough Lie rank, we have $$\sum_{\chi\ne \mathbbm{1}_G } \chi(1)^{-t}\leq c_4q^{2/t}c_2^{\sqrt{n}}\cdot q^{-t(n-1)/2} + c_5D^{-t}q^{-n}$$
for some positive constants $c_2,c_4,c_5, D$, where $n$ is the dimension of the natural module for~$G$ (which is $d+1$, $2d$, or $2d+1$). The right-hand side expression clearly tends to $0$ as $n\to \infty$, that is, as $d\to \infty$. (See the proof of \cite[Theorem 1.2]{LSh}.)
Therefore
 $N \ge  T  (1-\epsilon)$ for large enough rank of $G$.

We now count the solutions with $a = x^{ -1}$ fixed; note that for such triples we have  $c=u^v=(x^{-1}x^w)^{-1}=[w,x]\in I_G(x)$ for some $v,w\in G$, and the number of these triples $(x^{-1},x^w,u^v)$ with $x^{-1}x^wu^v=1$ is the same as the number of their elements  $u^v$, which are $s$-elements in $I_G(x)$. We see that  the number of such solutions is
$$N/|x^G|\geq (1 - \epsilon) |x^G||u^G|/|G|.$$
 We use this inequality  to get a lower bound for $|I_G(x) \cap U|$ for some Sylow
$s$-subgroup $U$.  The number of Sylow $s$-subgroups is $|G:B|$ where
 $B=N_G(U)$ is a Borel subgroup. Hence for at least one Sylow $s$-subgroup $U$ we must have  $$|I_G(x) \cap U| \ge \frac{(1 - \epsilon) |x^G||u^G|/|G|}{|G|/|B|}\geq  (1-
\epsilon) \frac{|B|}{|C_G(x)||C_G(u)|}.$$
The number on the right-hand side  is at least
 (roughly) $|U|/q^{2d}$, because $|B|$ is roughly equal to $|U|\cdot |C_G(x)|$ (in fact $|B|=|U|(q-1)^d$, since $x$ is regular semisimple
 while $|C_G(x)|\leq (q+1)^d$), and $|C_G(u)|=q^{d}|Z(G)|$
 since   $u$ is regular unipotent.

 Suppose that $A$ is an $r$-generated subgroup of $U$.   We claim that
$|U:A| > q^{2d}$ for large enough $d\gg r$.
Consider $U/\gamma_4(U)$,
where $\gamma_4(U)$ is the fourth term in the lower central
series of~$U$.  This is an $s$-group of nilpotency
class  $3$
and of exponent $s$ (or possibly $s^2$ for very small~$s$).   Thus
$A\gamma_4(U)/\gamma_4(U)$ has order at most $s^{f(r)}$.  Since $U/\gamma_4(U)$
has order very close to $q^{3d}$,
the claim follows.

For large enough $d\gg r$ we obtain a contradiction: on the one hand, $|I_G(x) \cap U|\geq
 |U|/q^{2d}$, whence the index of the subgroup $\langle I_G(x) \cap U\rangle$ is at most  $q^{2d}$, while on the other hand its index is greater than  $q^{2d}$, since this subgroup is $r$-generated by hypothesis.

Thus, the Lie rank $d$ is $r$-bounded. It remains to show that $e$ is
$r$-bounded, where, recall, our group $G$ is a finite simple group of Lie type of Lie rank $d$ over the field $\mathbb{F}_{q}$ of characteristic $s$ with $q=s^e$.  Since we know that the rank is bounded, it suffices to assume that
the rank is fixed and indeed
the type of $G$ is fixed (with $e$ increasing).  We exclude the case
$SL_2(q)$ as this is already covered.

Let $\chi$ be any irreducible character of $G$. Given the fixed Lie type and Lie rank,
we shall be using the fact that
\begin{equation}\label{e1}
|\chi(x)|\leq C_1
\end{equation}
for an absolute constant $C_1$
independent of $q$,  since $x$ is regular semisiple  (see \cite[Theorem 5.4]{GM} or \cite{ltt}). Furthermore,
\begin{equation}\label{e2}
 \sum _i\chi(u_i)\leq C_2
\end{equation}
for an absolute constant $C_2$
independent of $q$, where the $u_i$ are representatives of
distinct conjugacy classes of regular unipotent elements (by \cite{LGL} in good characteristic, and by \cite[2.14]{Ge} in general).

For any representative $u_i$ of a
conjugacy class of regular unipotent elements, consider the
triples $(a,b,c)$ such that $abc=1$ with $a\in (x^{-1})^G$, $b\in x^G$, and $c\in u_i^G$.  By \cite[Theorem~7.2.1]{serre}, the number of such triples is equal to
\begin{equation*}
 \frac{1}{|G|}\cdot |x^G|\cdot |x^G|\cdot |u_i^G|\cdot  \sum_{\chi}\frac{\chi (x^{-1})\chi (x)\chi(u_i)}{\chi(1)}
\end{equation*}
with summation over all irreducible characters of $G$.
Then the number of such triples with $a=x^{-1}$ fixed is
\begin{equation*}
N_i= \frac{1}{|G|}\cdot  |x^G|\cdot |u_i^G|\cdot  \sum_{\chi}\frac{\chi (x^{-1})\chi (x)\chi(u_i)}{\chi(1)}.
\end{equation*}
Note that for such triples, we have  $c=u_i^v=(x^{-1}x^w)^{-1}=[w,x]\in I_G(x)$ for some $v,w\in G$, and the number of these triples $(x^{-1},x^w,u_i^v)$ with $x^{-1}x^wu_i^v=1$ is the same as the number of their elements  $u_i^v$, which are $s$-elements in $I_G(x)$. Let $k$ be the number of distinct conjugacy classes of regular unipotent elements of $G$. We can choose one of regular unipotent elements $u$ and write  $|u^G|=|u_i^G|=|G|/|C_G(u_i)|=|G|/q^d|Z(G)|$ for all $i$. We obtain
\begin{align*}
N:=\sum_{i=1}^kN_i&= \sum_{i=1}^k \Big(\frac{ |x^G|\cdot |u^G|}{|G|}\cdot    \sum_{\chi}\frac{\chi (x^{-1})\chi (x)\chi(u_i)}{\chi(1)}\Big)\\ &=
\frac{ |x^G|\cdot |u^G|}{|G|}\cdot    \sum_{\chi}\frac{\chi (x^{-1})\chi (x)\sum_{i=1}^k\chi(u_i)}{\chi(1)}.
\end{align*}
Note that $N$ counts (some of the) different $s$-elements in $I_G(x)$.
Using the  inequalities \eqref{e1} and \eqref{e2}, we obtain
$$
N\geq \frac{ |x^G|\cdot |u^G|}{|G|}\cdot \Big(k-C_1^2C_2\sum_{\chi\ne \mathbbm{1}_G}\chi(1)^{-1}\Big).
$$

Since the Coxeter number $h$ of our group $G$ is greater than $2$ by the assumption that $G$ is not $SL_2(q)$, so that $1>2/h$, by  \cite[Theorem 1.1]{LSh} we have $\sum_{\chi\ne \mathbbm{1}_G } \chi(1)^{-1}\to 0$ as $|G|\to \infty$. Therefore, for any positive $\epsilon$, we have
$$
N\geq \frac{ |x^G|\cdot |u^G|}{|G|}\cdot (k-\epsilon)
$$
if $e$ is large enough.

We use this inequality  to get a lower bound for $|I_G(x) \cap U|$ for some Sylow
$s$-subgroup $U$.  The number of Sylow $s$-subgroups is $|G:B|$ where
 $B=N_G(U)$ is a Borel subgroup. Hence for at least one Sylow $s$-subgroup $U$ we must have
 $$|I_G(x) \cap U| \ge \frac{(k - \epsilon) |x^G||u^G|/|G|}{|G|/|B|}\geq  (k-
\epsilon) \frac{|B|}{|C_G(x)||C_G(u)|}\geq  (1-
\epsilon) \frac{|B|}{|C_G(x)||C_G(u)|}.$$
The number on the right-hand side  is at least
 (roughly) $|U|/(d+1)q^{d}$, because $|B|$ is roughly equal to $|U|\cdot |C_G(x)|$  (in fact $|B|=|U|(q-1)^d$ since $x$ is regular semisimple,
 while  $|C_G(x)|\leq (q+1)^d$),  and $|C_G(u)|=q^{d}|Z(G)|\leq q^{d} (d+1)$
 since   $u$ is  regular unipotent.

By hypothesis, $\langle I_G(x) \cap U\rangle$ is an $r$-generated subgroup of $U$.   Since $U$ is a nilpotent  $s$-group of class at most $d-1$ and of exponent bounded in terms of $s$ and $d$, the order of $\langle I_G(x) \cap U\rangle$ is bounded above in terms of $r,s,d$ by Lemma~\ref{l-nil-rank}. On the other hand, since we excluded the case $G=SL_2$, we have $|U|\geq q^{d+1}$. This leads to a contradiction for large enough $q\gg d$ in view of the inequality $|I_G(x) \cap U| \ge |U|/(d+1)q^d$ proved above. Hence $e$ is bounded in terms of $d$, and therefore also in terms of $r$, as required.
\end{proof}

\medskip

We now derive some consequences of Theorems~\ref{t-com-inv} and \ref{t-com-odd} for Carter subgroups, which will be used in the next section in the proofs of Theorem~B and Corollaries~C and~D. Recall that the generalized Fitting subgroup $F^*(G)$ of a group $G$ is  equal to $F(G)E(G)$, where $F(G)$ is the Fitting subgroup, while $E(G)=Q_1*\dots *Q_k$, known as the layer, is the central product of all subnormal quasisimple subgroups $Q_i$, that is, perfect groups with non-abelian simple central quotients.  Recall also that a finite group $G$ is said to be almost simple if $S\leq G\leq \operatorname{Aut}S$ for a non-abelian simple group $S$ identified with the subgroup of inner automorphisms; then of course  $F^*(G)=S$.

\begin{corollary}\label{c-simple}
  Let $G$ be an almost simple group with a Carter subgroup $C$.   Assume that every subgroup of $G$ generated by
a subset of $I_G(C)$ is $r$-generated.  Then either the  rank of~$G$ is $r$-bounded or $G = PSL_2(q)$ with $q \equiv 7\,(\operatorname{mod}8)$.
\end{corollary}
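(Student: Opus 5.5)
Let $S=F^*(G)$ be the simple socle, so $S\le G\le\operatorname{Aut}S$. We first reduce to a statement about the action of $C$ on $S$. Since $C$ is a Carter subgroup of $G$, it is not contained in $C_G(S)=1$; in fact $C\cap S\ne 1$ or $C$ induces nontrivial outer automorphisms. Either way, $C\ne 1$ is nilpotent, so we may choose an element $x\in C$ of prime order $p$. Moreover, $I_S(x)\subseteq I_G(C)$. By Lemma~\ref{l-inher}(b), every subgroup generated by a subset of $I_S(x)$ is $r$-generated.

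The argument then splits according to whether $x$ can be taken inner or not.

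\emph{Case $x\in S$.} Then $x$ is an element of prime order of the simple group $S$, and we can apply Theorem~\ref{t-com-inv} (if $p=2$) or Theorem~\ref{t-com-odd} (if $p$ is odd). If this yields that the rank of $S$ is $r$-bounded, then $|\operatorname{Aut}S/S|$ is $r$-bounded by Lemma~\ref{l-113-3}(c), and hence the rank of $G$ is $r$-bounded (for instance by Lemma~\ref{l-rank-syl}). Otherwise $S=PSL_2(q)$, with $q\equiv 3\pmod 4$ when $p=2$, or with $q\equiv-1\pmod p$ when $p$ is odd.

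In the exceptional situation we must use Vdovin's classification of Carter subgroups of almost simple groups~\cite{vdo4}. For $G$ with socle $PSL_2(q)$, a Carter subgroup exists in only a few configurations. In $PSL_2(q)$ itself, for $q$ odd, a Carter subgroup is a self-normalizing Sylow $2$-subgroup, which happens exactly when $q\equiv\pm 7$, indeed $q\equiv 7\pmod 8$ in the relevant residue $q\equiv 3\pmod 4$. For $q$ even, or $q\equiv1\pmod4$, the Carter subgroup is a Borel-type subgroup containing a unipotent or $\mathbb{F}_q$-split element. In those cases we can choose $x$ differently, namely a unipotent element or an element of the split torus lying in a Borel subgroup. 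The argument in the proof of Theorem~\ref{t-com-inv} then bounds $e$, because $[x,B]$ contains the full Sylow $s$-subgroup, which has rank $e$. Odd $p$ with $q\equiv-1\pmod p$ should not occur for a Carter subgroup: a nonsplit torus element is not in a nilpotent self-normalizing subgroup unless it lies in a Sylow $2$-structure. Whenever $C$ also contains a $2$-element, we switch to an involution $x$ and reduce to the involution case.

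\emph{Case $C\cap S=1$, or $x$ outer.} This cannot really happen. $C\cap S$ is a Carter subgroup of $CS$ restricted to $S$ in the appropriate sense, and Vdovin's tables show that $C\cap S\ne1$; alternatively, if $C\cap S=1$ then $C_S(C)=1$ because $C$ is self-normalizing, and Theorem~\ref{t-beha} would make $S$ soluble, which is a contradiction. So we may always pick $x\in C\cap S$ of prime order, and for the exceptional $PSL_2$ cases we choose the prime according to Vdovin's description. When $G>S$ with socle $PSL_2(q)$, a field or diagonal automorphism is present. We then check from the tables that $C\cap S$ contains either an element of a Borel subgroup (which bounds $e$ as above) or a Sylow $2$-subgroup, which forces $G=S$ with $q\equiv7\pmod8$.

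The main obstacle is this last step: a careful case check against Vdovin's tables for almost simple groups with socle $PSL_2(q)$. It must show that every Carter configuration other than $G=PSL_2(q)$ with $q\equiv7\pmod 8$ contains a suitable element of $C\cap S$ lying in a Borel subgroup. This can be a unipotent element, or a torus element of order dividing $(q-1)/2$ (including involutions when $q\equiv1\pmod4$), or an involution whose commutators generate a large elementary subgroup.
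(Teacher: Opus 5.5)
\textbf{Gap: some cases cannot be handled by elements of $C\cap S$.} Most of your framework is sound: pick $x\in C\cap S$ of prime order (it exists by Theorem~\ref{t-beha}, since $C_S(C)\le C\cap S$), apply Theorem~\ref{t-com-inv} or Theorem~\ref{t-com-odd}, and pass from $S$ to $G$ via Lemma~\ref{l-113-3}(c). The failure is in your last step. There you discard outer elements of $C$ and claim that Vdovin's tables always give a suitable element of $C\cap S$ in a Borel subgroup unless $G=S$ and $q\equiv 7\pmod 8$. This is false.

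Take $S=PSL_2(p^e)$ with $p\equiv 3\pmod 8$ and $e$ odd, so $q\equiv 3\pmod 8$. By \cite{vdo4}, $G\ge PGL_2(q)$ and $C$ contains a Sylow $2$-subgroup. For $G=PGL_2(q)$, for instance, $C\cong D_8$ and $C\cap S$ is a Klein four-group of inner involutions. Borel subgroups of $S$ have odd order $q(q-1)/2$, so none of these involutions lies in one.

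Worse, for such an involution $x$, every subgroup generated by a subset of $I_S(x)$ is $2$-generated, whatever $e$ is. If $y=[g,x]=x^gx\in B$, then $x$ inverts $y$, so $I_S(x)\cap B\subseteq B\cap B^x$, which is cyclic. Every subgroup of $PSL_2(q)$ not contained in a Borel subgroup is $2$-generated. Hence no choice of $x\in C\cap S$ can bound $e$ here. Your dichotomy (``an element of a Borel subgroup, or a Sylow $2$-subgroup, which forces $G=S$ with $q\equiv 7\pmod 8$'') fails: $C\cap S$ is a Sylow $2$-subgroup of $S$, yet $G\ne S$ and $q\equiv 3\pmod 8$.

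\textbf{The missing idea: use an outer element of $C$.} Since $C$ contains a Sylow $2$-subgroup of $PGL_2(q)$, it contains a diagonal involution $x\in PGL_2(q)\setminus PSL_2(q)$. For $q\equiv 3\pmod 4$ every such involution lies in a split torus. It normalizes a Sylow $p$-subgroup $U$ of $S$ and inverts it. Then $I_U(x)=U$ is elementary abelian of rank $e$ and is generated by elements of $I_G(C)$, so $e\le r$. This is exactly how the paper handles $p\equiv 3\pmod 8$.

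The rest of the paper's argument goes as follows. First it applies Theorem~\ref{t-com-inv} to an involution of $C\cap S$; such an involution exists by \cite{vdo4} unless $S=PSL_2(3^{2k+1})$. In that exceptional case it uses an element of order $3$ in $C\cap S$ and Theorem~\ref{t-com-odd}, with no exception since $3\nmid 3^{2k+1}+1$. Your heuristic remark that odd $p$ with $q\equiv -1\pmod p$ ``should not occur'' should likewise be replaced by this precise consequence of Vdovin's classification.
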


\begin{proof}    We note that by \cite{vdo4}, either $C \cap F^*(G)$ has even order or  $F^*(G)=PSL_2(3^{2k+1})$. When  $C \cap F^*(G)$ has even order,  there exists an involution $x \in C \cap F^*(G)$. Then the result follows by Theorem~\ref{t-com-inv} unless possibly $F^*(G) = PSL_2(p^e)$ with $e$ odd and $p$ a prime
with $p \equiv  3\,(\operatorname{mod}4)$.  If $p \equiv 3\,(\operatorname{mod}8)$, then again by \cite{vdo4},   $G$ must contain $PGL_2(p^e)$ and $C$ contains
a Sylow $2$-subgroup.   Then there is an involution $x \in C$
inducing inversion on the Sylow $p$-subgroup $U$ and so
$I_U(x)=U$, whence $e \le r$.    Thus, the only possibility is that $p \equiv 7\,(\operatorname{mod}8)$ and the result follows.

It remains to consider the case where $F^*(G)=PSL_2(3^{2k+1})$. By \cite{vdo4} then $C$ contains a Sylow $3$-subgroup of $C_{PSL_2(3^{2k+1})}(\zeta_{3'})$, where $\zeta_{3'}$ is the $3'$-part of a field automorphism $\zeta$ of odd order.
We choose an element $x\in C$ of order~3, for which any subset of  $I_G(x)$ generates an $r$-generator subgroup by hypothesis.  Then the rank of $G$ is $r$-bounded by Theorem~\ref{t-com-odd}, since  $3^{2k+1}\not\equiv -1\,(\operatorname{mod} 3)$
\end{proof}

We now derive from Corollary~\ref{c-simple} two technical lemmas about Carter subgroups in groups of special form.

\begin{lemma}\label{112} Let $G$ be a group containing a Carter subgroup $C$ such that any subgroup generated by a subset of $I_G(C)$ is $r$-generator. Suppose that $G=MC$, where $M$ is a direct product of simple groups, each normal in $G$.  Then
 \begin{itemize}
  \item[\rm (a)]  the number of simple factors in $M$ is at most $r$;
  \item[\rm (b)] the rank of every simple factor of $M$ is $(r, l)$-bounded, where $l$ is the maximum  of the  ranks of the factors  isomorphic to $PSL_2(q)$  with $q \equiv 7\,(\operatorname{mod} 8)$.
       \end{itemize}
\end{lemma}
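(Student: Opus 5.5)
Write $M=S_1\times\cdots\times S_k$ with each $S_i$ simple and normal in $G$. Since each $S_i$ is normal, $C$ normalizes each factor. Let $C_i=C_G(S_i)$; then $G/C_i$ embeds in $\operatorname{Aut}S_i$ and contains $S_iC_i/C_i\cong S_i$, so $G/C_i$ is almost simple with socle $S_i$. By Theorem~\ref{t-cart}, the image $\bar C$ of $C$ in $G/C_i$ is a Carter subgroup. By Lemma~\ref{l-inher}(a), every subgroup generated by a subset of $I_{G/C_i}(\bar C)$ is $r$-generated. Corollary~\ref{c-simple} applies: either $G/C_i$, and hence $S_i$, has $r$-bounded rank, or $G/C_i=S_i=PSL_2(q)$ with $q\equiv 7\,(\operatorname{mod}8)$, whose rank is at most $l$ by definition. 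This gives (b).

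For (a) I use the same dichotomy. The first step is to check that $C$ acts nontrivially on each $S_i$, i.e.\ $[S_i,C]\neq 1$. Otherwise $C$ centralizes $S_i$, so the image of $C$ in $G/C_i$ is trivial; but it is a Carter subgroup of the nontrivial group $G/C_i$, hence self-normalizing, which is a contradiction. So for each $i$ I can pick $g_i\in S_i$ and $c_i\in C$ with $t_i:=[g_i,c_i]\ne 1$. Because $S_i$ is normal, $t_i\in S_i$. The elements $t_1,\dots,t_k$ lie in distinct direct factors and commute pairwise, but they need not have the same prime order, so I refine the choice. Since $[S_i,C]$ is a nontrivial normal subgroup of $S_i$ normalized by $C$, we get $[S_i,C]=S_i$. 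Hence $I_{S_i}(C)$ generates $S_i$, which has even order, and some commutator in $I_{S_i}(C)$ has even order.

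Elementwise powers do not stay inside $I_G(C)$, so instead I pass to a subgroup generated by commutators. Put $H=\langle t_1,\dots,t_k\rangle$ with the $t_i$ chosen nontrivial in $I_{S_i}(C)$; then $H$ is abelian and $H=\prod\langle t_i\rangle$, a direct product of $k$ nontrivial cyclic groups. Choose a prime $p$ dividing the orders of at least $k/\pi$ of the $t_i$, where $\pi$ is a number of primes to be controlled. A more robust route is to choose each $t_i$ of even order, which is possible as argued above. Then $H$ maps onto $(\mathbb Z/2)^k$, so $H$ needs $k$ generators. By hypothesis $H$ is $r$-generated, so $k\le r$.

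The main obstacle is justifying the existence of an even-order element in $I_{S_i}(C)$, as opposed to merely a generating set of even-order subgroup. To obtain it I would argue that, since $\langle I_{S_i}(C)\rangle=S_i$, not all elements of $I_{S_i}(C)$ can have odd order; otherwise $S_i$ would be generated by odd-order elements, which is not itself a contradiction. The fallback is to use the hypothesis directly on $\langle I_{S_1\cdots S_k}(C)\rangle = M$. Since $M$ is generated by a subset of $I_G(C)$, it is $r$-generated, and a direct product of $k$ nonabelian simple groups needs roughly $\log k$ generators. That gives only $k$ bounded in terms of $r$, not $k\le r$. To get exactly $k\le r$, I would use the product $t_1\cdots t_k$ together with the subgroup $\langle t_1,\dots,t_k\rangle$, which is abelian with $k$ nontrivial cyclic direct factors. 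By choosing all $t_i$ of prime-power order for the same prime, for instance conjugating inside $S_i$ to $p$-elements via Corollary~\ref{c-simple}'s involution or order-$3$ element of $C\cap S_i$, its minimal number of generators is $k$, so $k\le r$.
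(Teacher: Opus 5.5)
\textbf{There is a genuine gap in part (a).} Your part (b) is fine and is essentially the paper's argument; working in $G/C_G(S_i)$ instead of $S_iC/C_{S_iC}(S_i)$ changes nothing.

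Your reduction in (a) is correct: if each $S_i$ contains some $t_i\in I_G(C)$ of even order, then $\langle t_1,\dots,t_k\rangle$ is abelian and maps onto $(\mathbb Z/2)^k$, so $k\le r$. But nothing in your argument produces such $t_i$, as you yourself note. The inference ``$\langle I_{S_i}(C)\rangle=S_i$ has even order, so some element of $I_{S_i}(C)$ has even order'' is invalid, and none of the fallbacks repair it:
\begin{itemize}
\item The $r$-generation of $M$ bounds nothing. A direct product of any number of pairwise non-isomorphic nonabelian simple groups is $2$-generated, so your ``roughly $\log k$'' estimate is false.
\item Picking a prime dividing many of the $|t_i|$ gives no bound in terms of $r$ alone, because the number of primes involved is uncontrolled.
\item ``Conjugating inside $S_i$ to $p$-elements'' does nothing. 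Conjugation preserves element orders, and $I_G(C)^g=I_G(C^g)$, which is not $I_G(C)$ in general.
\end{itemize}

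The missing ingredient is concrete information about how $C$ acts on $S_i$. The paper takes this from Vdovin's classification of Carter subgroups of almost simple groups. Let $\bar C$ be the image of $C$ in $S_iC/C_{S_iC}(S_i)$. Then either $\bar C\cap S_i$ contains an involution $x$, or $S_i\cong PSL_2(3^{2m+1})$ and $\bar C$ contains an element $y$ of order $3$ lying in a subgroup $PSL_2(3)\cong A_4$.
\begin{itemize}
\item In the first case, Glauberman's $Z^*$-theorem gives $g\in S_i$ with $x^g\ne x$ and $x^g$ commuting with $x$. Then $[g,\hat x]=[g,x]=x^gx$ is an involution in $S_i\cap I_G(C)$, where $\hat x\in C$ is a preimage of $x$.
\item In the second case, taking $g$ to be an involution of the Klein four-subgroup of that $A_4$ makes $[g,\hat y]$ an involution.
\end{itemize}
Choosing one such involution in each $S_i$ gives an elementary abelian $2$-subgroup of rank $k$ generated by elements of $I_G(C)$, whence $k\le r$.

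Some input of this kind is unavoidable. In $PSL_2(27)\rtimes\langle\varphi\rangle$, with $\varphi$ a field automorphism of order $3$, a Sylow $3$-subgroup (of order $81$) is self-normalizing and hence a Carter subgroup. So $C$ may contain no involution at all, and the order-$3$ case genuinely has to be handled separately.
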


\begin{proof}
Let $M=S_1\times \dots \times S_n$, where each $S_i$ is a simple normal subgroup. Each $S_iC$ contains $C$ as a Carter subgroup. The quotient $S_iC/C_{S_iC}(S_i)$ is an almost simple group. The image $\bar C $ of $C$ is a Carter subgroup of this quotient; note that the image of $S_i$ is isomorphic to $S_i$ and we denote it by the same symbol.

(a)  We claim that every $S_i$ contains an involution from $I_G(C)$.  According to \cite{vdo4} either $\bar C \cap S_i$ has even order, or
$S_i=PSL_2(3^{2k+1})$ and $\bar C$ contains a Sylow $3$-subgroup
of $C_{PSL_2(3^{2k+1})}(\zeta_{3'})$, where $\zeta_{3'}$ is the $3'$-part of a field automorphism $\zeta$ of odd order.
If $\bar C \cap S_i$ has even order, choose an involution $x\in \bar C \cap S_i$. By Glauberman's $Z^*$-theorem,
there is a  conjugate $x^g \ne x$ for $g\in S_i$ such that $x^gx$ is an involution, and it belongs to $S_i\cap I_G(C)$ because $x^gx =[g,x]=[g,{\hat x}]$ for a pre-image $\hat x\in C$ of $x$. When $S_i=PSL_2(3^{2k+1})$, there is an element $y$ of order 3 in $\bar C\cap  PSL_2(3)$, so that for some $g\in S_i$ the commutator $[g,y]=[g,\hat y]$ is an involution  in $S_i\cap I_G(C)$, where $\hat y\in C$ is a pre-image of $y$. Picking an involution in $S_i\cap I_G(C)$ for every $i$, we generate by them an elementary abelian 2-group of rank $n$, so that $n\leq r$ by hypothesis.

(b) By Corollary~\ref{c-simple} applied to $S_iC/C_{S_iC}(S_i)$, either the rank of $S_i$ is $r$-bounded, or $S_i = PSL_2(q)$ with $q \equiv 7\,(\operatorname{mod}8)$. Hence the result.
\end{proof}

In the interests of Corollaries~C and D we also prove an analogue of Lemma~\ref{112} with additional conditions which make it possible to drop the dependence on the ranks of composition factors isomorphic to $PSL_2(q)$ with $q \equiv 7\,(\operatorname{mod}8)$.

\begin{lemma}\label{112c} Let $G$ be a group containing a Carter subgroup $C$ such that any subgroup generated by a subset of $I_G(C)$ is $r$-generator. Suppose that $G=MC$, where $M$ is a direct product of simple groups, each normal in $G$.  Suppose in addition that one of the following holds:
\begin{itemize}
  \item[(1)]  $|C|$ is not divisible by 8; or
  \item[(2)]   for any $x \in I_G(C)$, any subgroup generated by a subset of $I_G(x)$ can be generated by $r$ elements.
\end{itemize}
 Then  the rank of $M$ is $r$-bounded.
\end{lemma}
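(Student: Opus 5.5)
By Lemma~\ref{112}(a), $M$ has at most $r$ simple factors. By Lemma~\ref{l-rank-syl}, the rank of $M$ is then bounded in terms of $r$ and the maximum rank of the factors $S_i$. So it suffices to bound the rank of each $S_i$ in terms of $r$. By Lemma~\ref{112}(b), or directly by Corollary~\ref{c-simple} applied to the almost simple group $S_iC/C_{S_iC}(S_i)$, the only factors not yet controlled are those with $S_i\cong PSL_2(q)$, $q=p^e\equiv 7\,(\operatorname{mod}8)$. For these the prime $p$ is at most $q$, so it is enough to show that $e$ is $r$-bounded. Write $\bar G_i=S_iC/C_{S_iC}(S_i)$ and let $\bar C$ be the image of $C$; it is a Carter subgroup of $\bar G_i$. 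By Lemma~\ref{l-inher}, both hypotheses (1) and (2) pass to $\bar G_i$ with $\bar C$: the order of $\bar C$ divides $|C|$, and commutators lift to commutators.

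\emph{Case (1).} I would use Vdovin's description \cite{vdo4} of Carter subgroups of almost simple groups with socle $PSL_2(q)$, $q\equiv 7\,(\operatorname{mod}8)$. In the situation left open by Corollary~\ref{c-simple}, I expect the Carter subgroup $\bar C$ to contain a Sylow $2$-subgroup of $\bar G_i$. If $\bar G_i$ contains $PSL_2(q)$ properly (contains $PGL_2(q)$ or field automorphisms), the proof of Corollary~\ref{c-simple} shows $\bar C$ then contains an involution inverting a Sylow $p$-subgroup $U$. Since $I_U(x)=U$, this gives $e\le r$. Otherwise $\bar G_i=PSL_2(q)$, and $\bar C$ is a Sylow $2$-subgroup, which is dihedral of order $(q+1)_2\ge 8$ because $q\equiv 7\,(\operatorname{mod}8)$. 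Then $8$ divides $|\bar C|$, and hence divides $|C|$, contradicting (1). The main thing to check in this case is this fact from \cite{vdo4}, that when $\bar G_i=PSL_2(q)$ the Carter subgroup is exactly a Sylow $2$-subgroup of order divisible by $8$.

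\emph{Case (2).} Take an involution $x\in \bar C\cap S_i$; it exists because $|\bar C\cap S_i|$ is even by \cite{vdo4}, as $q$ is not a power of $3$. As in the proof of Lemma~\ref{112}(a), by the $Z^*$-theorem, or just inside $PSL_2(q)$, there is $g\in S_i$ such that $y=[g,x]$ is an involution. Then $y\in I_{\bar G_i}(\bar C)$ and it lifts to an element of $I_G(C)$. Any two involutions of $PSL_2(q)$ are conjugate, so $y$ is just some involution of $S_i$. The goal is to find an involution in $I_G(C)$ for which hypothesis (2) yields a bounded-rank conclusion.

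The natural tool is Theorem~\ref{t-com-odd}, applied to an element of odd prime order $t$ lying in $I_G(C)$. Its exceptional case requires $q\equiv -1\,(\operatorname{mod}t)$, so I would want $t\mid q-1$ instead. Since $C$ is nilpotent, I would look for commutators $[g,x]$ with $g$ in the dihedral normalizer $N$ of a split torus containing $x$. Such $g$ can produce nontrivial elements of the split torus of order $(q-1)/2$, which is odd, and then one can take an element $z$ of odd prime order $t\mid q-1$ among them. With $z\in I_G(C)$, hypothesis (2) says subsets of $I_G(z)$ generate $r$-generated subgroups, and Theorem~\ref{t-com-odd} applied to $S_i$ then gives that the rank of $S_i$ is $r$-bounded. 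If $(q-1)/2=1$, then $q=3$ and there is nothing to prove.

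For a more elementary alternative, I would take $z$ in the Borel subgroup $B=UT$, $T$ split. Then $I_U(z)=U$, since $z$ acts fixed-point-freely on $U$, giving $e\le r$ directly. The step I expect to be hardest is producing an element of $I_G(C)$ that lies in a split torus, that is, in a Borel subgroup, as opposed to a nonsplit torus. My first attempt would be to realize $x$ inside $N_{S_i}(T)$ with $T$ split, which is possible since $N$ is dihedral of order $q-1$ and contains involutions. Then choose $g\in T$, so that $[g,x]=g^{-2}$, and this runs over all of the odd-order group $T$.
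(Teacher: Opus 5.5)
Your proposal is essentially correct. Case (1) contains one false claim that should be deleted, and case (2) takes a genuinely different, more elementary route than the paper.

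\textbf{Case (1).} You claim that whenever $\bar G_i$ properly contains $PSL_2(q)$, the subgroup $\bar C$ contains an involution inverting a Sylow $p$-subgroup. This fails when $PGL_2(q)\not\le \bar G_i$.
\begin{itemize}
\item Since $q=p^e\equiv 7\pmod 8$ forces $e$ odd, such a $\bar G_i$ has odd index over $PSL_2(q)$, so all its involutions lie in $PSL_2(q)$.
\item None of these involutions normalizes a Sylow $p$-subgroup $U$, because $N_{PSL_2(q)}(U)$ has odd order $q(q-1)/2$.
\item This case does occur. In $PSL_2(7^3)\rtimes\langle\varphi\rangle$, with $\varphi$ a field automorphism of order $3$, a Sylow $2$-subgroup of $PSL_2(7)$ times $\langle\varphi\rangle$ is a Carter subgroup.
\end{itemize}
The proof of Corollary~\ref{c-simple} produces the inverting involution only for $p\equiv 3\pmod 8$, where \cite{vdo4} forces $PGL_2(q)\le \bar G_i$. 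The case split is unnecessary anyway. The fact you already take from \cite{vdo4}, that $\bar C$ contains a Sylow $2$-subgroup of $\bar G_i$, implies in every subcase that $\bar C$ contains a Sylow $2$-subgroup of $S_i$. That subgroup has order $(q+1)_2\ge 8$, so $8$ divides $|\bar C|$ and hence $|C|$, contradicting (1). This uniform argument is exactly the paper's.

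\textbf{Case (2).} The paper uses \cite{vdo4} to place a Sylow $2$-subgroup of $S\bar C$ inside $\bar C$. It then applies the deep Theorem~\ref{t-1-2} from \cite{agks} to that subgroup to conclude that $S$ has $r$-bounded rank. Your elementary alternative is complete and correct:
\begin{itemize}
\item If $\hat x\in C$ maps to the involution of $\bar C\cap S_i$, then $\hat x$ induces on $S_i$ conjugation by an involution $x_0\in S_i$. Hence $[g,\hat x]=[g,x_0]$ for $g\in S_i$.
\item Since $q\equiv 3\pmod 4$, all involutions of $S_i$ are conjugate, so $x_0$ inverts a split torus $T$ of odd order $(q-1)/2\ge 3$. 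Therefore $T=\{g^{-2}: g\in T\}\subseteq I_G(C)$.
\item Any $1\ne z\in T$ acts fixed-point-freely on the unipotent radical $U$ of the Borel subgroup $UT$. So $I_U(z)=U$ is elementary abelian of rank $e$, and hypothesis (2) gives $e\le r$.
\end{itemize}

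Your route needs from \cite{vdo4} only the existence of an involution in $\bar C\cap S_i$, which is already used in Lemma~\ref{112}(a). It applies hypothesis (2) to a single element and avoids both Theorem~\ref{t-1-2} and Theorem~\ref{t-com-odd}. Your first route, through Theorem~\ref{t-com-odd} with $t\mid q-1$, also works but is superfluous. The paper's route is a two-line black-box application that never looks inside $PSL_2(q)$. However, it rests on the much heavier result from \cite{agks} and on the sharper fact that $\bar C$ contains a full Sylow $2$-subgroup.
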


\begin{proof}
  By Lemma~\ref{112} the number of simple factors of $M$ is at most $r$ and each factor either has $r$-bounded rank or is isomorphic to $PSL_2(q)$ with $q \equiv 7\,(\operatorname{mod}8)$. It remains to show that the latter factors also have $r$-bounded rank. Let $S=PSL_2(q)$ with $q \equiv 7\,(\operatorname{mod}8)$ be one of such factors of $M$. Then $SC$ contains $C$ as a Carter subgroup. The quotient $SC/C_{SC}(S)$ is an almost simple group, the image $\bar C $ of $C$ is a Carter subgroup of this quotient, and the image of $S$ is isomorphic to $S$ and is  denoted by the same symbol.

  Under condition (1), the group $G$ cannot have composition factors isomorphic to $S=PSL_2(q)$ with $q \equiv 7\,(\operatorname{mod}8)$, since the order $q(q-1)(q+1)$ of such a factor is divisible by~8.

  Now assume condition (2). According to \cite{vdo4}, either $S=PSL_2(3^{2k+1})$
  or $\bar C$ contains a Sylow 2-subgroup of $S\bar C$. The rank of $S=PSL_2(3^{2k+1})$ is $r$-bounded by Corollary~\ref{c-simple}.
   When $\bar C$ contains a Sylow 2-subgroup $T$ of $S\bar C$, the set $I_{S\bar C}(T)$ is a subset of  $I_{S\bar C}(\bar C)$ and therefore satisfies the hypotheses of Theorem~\ref{t-1-2}. By this theorem the rank of $[S,T]$ is $r$-bounded, and so is the rank of $S$.
\end{proof}

\section{Carter subgroups}\label{s-general}

Recall that the generalized Fitting subgroup $F^*(G)$ of a group $G$ is  equal to $F(G)E(G)$, where $F(G)$ is the Fitting subgroup, while $E(G)=Q_1*\dots *Q_k$, known as the layer, is the central product of all subnormal quasisimple subgroups $Q_i$, that is, perfect groups with non-abelian simple central quotients.

We denote by $R(G)$ the soluble radical of a group $G$, its largest normal soluble subgroup. First we prove Theorem~B in the case where the soluble radical $R(G)$ is trivial, obtaining in addition that then the generalized Fitting subgroup  has bounded index.

\begin{proposition}\label{113}
Let $G$ be a group containing a Carter subgroup $C$ such that any subgroup generated by a subset of $I_G(C)$ is $r$-generator. Suppose that $R(G)=1$. Then
\begin{itemize}
  \item[\rm (a)] the group $G$ has $(r, l)$-bounded rank, where $l$ is the maximum of the  ranks of the composition factors of $G$ isomorphic to $PSL_2(q)$  with $q \equiv 7\,(\operatorname{mod} 8)$.
  \item[\rm (b)] the generalized Fitting subgroup $F^*(G)$ has $(r,l)$-bounded index.
\end{itemize}
 \end{proposition}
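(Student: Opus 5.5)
Since $R(G)=1$, the Fitting subgroup is trivial and $F^*(G)=E(G)=M$ is a direct product of non-abelian simple groups $S_1\times\dots\times S_n$, with $C_G(M)=1$, so $G$ embeds in $\operatorname{Aut}M$. The plan is to bound first the rank of $M$, then the index $|G:M|$, which gives both (a) and (b) via Lemma~\ref{l-rank-syl}.

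\textbf{Step 1: the number of simple factors.} The factors $S_i$ need not be normal in $G$; $G$ permutes them. Let $N=\bigcap_i N_G(S_i)$, the kernel of this permutation action. Since $C$ is a Carter subgroup of $G$, the subgroup $MC$ contains $C$ as a Carter subgroup. I would like to apply Lemma~\ref{112} to a subgroup in which every factor is normal. The argument of Lemma~\ref{112}(a) works more generally: for each $S_i$ consider $N_G(S_i)$ acting on $S_i$. The image of $C\cap N_G(S_i)$ need not be Carter there, so instead I use the orbits. Let $O$ be a $C$-orbit on $\{S_1,\dots,S_n\}$ and put $T=\prod_{S\in O}S$, which is $C$-invariant. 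If $|O|>1$, take $c\in C$ moving $S_1\in O$ to $S_2$ and $g\in S_1$. Then $[g,c]=g^{-1}g^c$ has nontrivial components in $S_1$ and $S_2$ only. Choosing commutators supported on disjoint pairs, or on the factor itself when $C$ normalizes it as in Lemma~\ref{112}, yields a direct product of many nontrivial elements, say involutions or elements of a fixed prime order. These generate an elementary abelian group whose rank grows with $n$, so $n$ is $r$-bounded. An alternative is to show that the number of $C$-orbits is at most $r$ and that each orbit has $r$-bounded length. For the orbit length, if $c$ acts on an orbit as a cycle of length $m$, commutators $[g_j,c]$ with $g_j$ in distinct factors give about $m/2$ independent elements.

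\textbf{Step 2: rank of each factor.} Let $H=N_G(S_i)$. The group $H\cap MC$ is not obviously related to Carter subgroups, so I would use Vdovin's description as in Corollary~\ref{c-simple}. The image of $N_C(S_i)$ in $\operatorname{Aut}S_i$ is, by \cite{vdo4}, essentially the Carter subgroup of the induced almost simple group. The statements quoted before Corollary~\ref{c-simple} only need an involution, or an element of order $3$, lying in the image of $C$ inside $S_i$. Projecting to $S_i$ via $C_{H}(S_i)$ and applying Theorems~\ref{t-com-inv}, \ref{t-com-odd} exactly as in Corollary~\ref{c-simple}, each $S_i$ has $(r,l)$-bounded rank. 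With $n$ bounded, the rank of $M$ is $(r,l)$-bounded.

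\textbf{Step 3: the index.} $G/M$ embeds in $\operatorname{Out}(M)$, which is an extension of $\prod\operatorname{Out}(S_i)$ by a subgroup of $\operatorname{Sym}(n)$. By Lemma~\ref{l-113-3}(c), each $|\operatorname{Out}S_i|$ is bounded in terms of the rank of $S_i$, and $n$ is bounded. Hence $|G:M|$ is $(r,l)$-bounded, proving (b). The rank of $G$ is then at most the rank of $M$ plus $\log_2|G:M|$, proving (a).

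The main obstacle is Step 1/2: passing from $C$, which is a Carter subgroup of $G$ but not of $N_G(S_i)$, to a Carter subgroup of the almost simple section so that Vdovin's tables apply. I would first try Vdovin's result that the normalizer structure behaves well: $C\cap N$ should be a Carter subgroup of $MC\cap N$ when $G/N$ is nilpotent. Here $G/N$ is a permutation group generated by the image of $C$, hence nilpotent. If that fails, I would argue directly from the $C$-orbits using commutators with elements of $C$ permuting factors.
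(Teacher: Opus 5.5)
\textbf{Verdict.} Your outline matches the paper's (count the simple factors, bound the rank of each factor, then bound $|G:F^*(G)|$ through $\operatorname{Out}$), and Steps~1 and~3 are essentially its Lemma~\ref{l-113-1} and concluding paragraph. The gap is in Step~2, for the simple factors $S$ of $F^*(G)$ that are \emph{not} normalized by $C$.

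\textbf{Where Step~2 fails.} For these factors you assert that the image of $N_C(S)$ is a Carter subgroup of the induced almost simple group, citing \cite{vdo4} only ``essentially''. You give no proof, and you yourself flag this as the main obstacle. The principle you propose as a fallback is false in general: ``$C\cap N$ is a Carter subgroup of $MC\cap N$ when $G/N$ is nilpotent''. Take $G=S_3$, $C=\langle(12)\rangle$ and $N=M=A_3$. Then $C$ is Carter in $G$ and $G/N$ is nilpotent, but $C\cap N=1$ is not Carter in $A_3$. So, as written, the ranks of the factors moved by $C$ are not bounded.

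Note that the difficulty only concerns these factors. For a factor normalized by $C$, the group $C$ really is a Carter subgroup of $S_iC$, so Lemma~\ref{112}(b) applies directly.

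\textbf{The missing idea.} Vdovin is not needed for the moved factors; your remark about ``commutators with elements of $C$ permuting factors'' is the right instinct, carried through as follows.

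\begin{itemize}
\item Pick $c\in C$ with $S^c\ne S$. Since $S$ and $S^c$ are distinct direct factors, they commute elementwise.
\item For any abelian subgroup $A\le S$, the map $a\mapsto[a,c]=a^{-1}a^c$ is therefore a homomorphism $A\to S\times S^c$. It is injective, because its $S$-component is $a^{-1}$.
\item Hence $[A,c]\cong A$ is an abelian subgroup generated by elements of $I_G(C)$, so $A$ has rank at most $r$.
\item Apply this to a maximal normal abelian subgroup $A$ of a Sylow $p$-subgroup $P$ of $S$. Then $C_P(A)=A$, so $P/A$ is a $p$-group of automorphisms of $A$ and has $r$-bounded rank by Lemma~\ref{l-gmh}. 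Thus $P$ has $r$-bounded rank.
\item Lemma~\ref{l-rank-syl} then gives an $r$-bound (not even depending on $l$) for the rank of $S$.
\end{itemize}

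This is exactly the paper's proof of Lemma~\ref{l-113-2}. With it in place, your Steps~1 and~3 complete the argument.
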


\begin{proof}  Since  the soluble radical $R(G)$ of $G$ is trivial, the socle  $F^*(G)$  is a direct product of non-abelian simple groups, say, $ F^*(G)=T_1\times \dots \times T_n$, where  the $T_i$ are non-abelian simple groups. Recall that $G$ acting by conjugation permutes the factors $T_i$. It is convenient to write  $ F^*(G)=M\times N$, where $M$ is the product of the factors $T_i$ that are normalized by the Carter subgroup $C$, and $N$  is the product of the other factors $T_i$, which are not normalized by $C$ (it is possible that one of the subgroups $M,N$ is trivial).

\begin{lemma}\label{l-113-1}
 The number  $n$ of the simple factors  $T_i$ of $F^*(G)$ is
 at most $3r+1$.
\end{lemma}

\begin{proof}
Applying  Lemma~\ref{112}(a) to the  subgroup $MC$ we obtain that the number of simple factors of $M$ is at most $r$. It remains to bound the number of simple factors in $N$. Let $N=S_1\times\dots\times S_t$, where each of the simple factors $S_i$ is not normalized by $C$. We claim that $t\leq 2r+1$. Indeed, if $t\geq 2r+2$, then we can assume without loss of generality that there are factors $S_1,\dots ,S_{2r+2}$ and elements  $c_i\in C$ for $i=1,\dots,r+1$ such that $S_{2i-1}^{c_i}=S_{2i}$. In every ${S_{2i-1}}$ we choose an involution $x_i$ and set $y_i=[x_i,c_i]=x_i^{-1}x_i^{c_i}$. Since $x_i^{-1}\in S_{2i-1}$ and $x_i^{c_i}\in S_{2i}$ for all $i$, the subgroup $\langle y_1,\dots , y_{r+1}\rangle$ is an abelian 2-group of rank exactly $r+1$. Since this subgroup is generated by a subset of $I_G(C)$, we obtain a contradiction with the hypothesis. Thus the number  $n$ of simple factors of $F^*(G)$ is
at most $3r+1$.
\end{proof}

\begin{lemma}\label{l-113-2}
The  rank of each of the simple factors  $T_i$ of $F^*(G)$ is $(r,l)$-bounded.
\end{lemma}

\begin{proof}
The  rank of each simple factor of $M$ is $(r,l)$-bounded   by Lemma~\ref{112}(b) applied to $MC$. It remains to show that the  rank of each simple factor $S$ of $N$ is $(r,l)$-bounded; in fact, we claim that the rank of $S$ is  $r$-bounded.  By Lemma~\ref{l-rank-syl} the rank of a finite group is at most $m+1$, where $m$ is the maximum of the ranks of its Sylow subgroups.  Hence it suffices to prove that the rank of every Sylow $p$-subgroup $P$ of $S$ is  $r$-bounded, for every prime $p$. In turn, the rank of $P$ is bounded in terms of the rank of a maximal abelian normal subgroup $A$ of $P$ by Lemma~\ref{l-gmh}.  Let $c\in C$ be an element such that $S^c\neq S$. Then $[A,c]$ is an abelian  $p$-group generated by a subset of $I_G(C)$ and therefore has rank at most $r$. Since the projection of $[A,c]$ onto $S$ is $A$, the rank of $A$ is at most $r$. Hence the result.
\end{proof}

We return to the proof of Proposition~\ref{113}. It follows from Lemmas~\ref{l-113-1} and \ref{l-113-2}  that $F^*(G)$ has $(r, l)$-bounded rank. We now prove that the index of $F^*(G)$ is $(r, l)$-bounded, which will also imply that the rank of $G$ is $(r, l)$-bounded.

Since $R(G)=1$, the group $G$ embeds into the automorphism group of $F^*(G)$.  It is well known that $ \operatorname{Aut} F^*(G) = (\operatorname{Aut} T_1\times \dots \times \operatorname{Aut}T_n)U$,
 where $U$ is a subgroup of the symmetric group $S_n$. Since $n$ is $r$-bounded by Lemma~\ref{l-113-1}, it remains to prove that the orders of the outer automorphism groups $ \operatorname{Aut}T_i/T_i$ are $(r,l)$-bounded. This follows from Lemma~\ref{l-113-3}(c), since the ranks  of the $T_i$ are $(r, l)$-bounded by Lemma~\ref{l-113-2}.

Thus, $|G:F^*(G)|$ is $(r, l)$-bounded, and therefore the rank of $G$ is $(r, l)$-bounded.
\end{proof}

Under the same hypotheses as in Proposition~\ref{113}, the following lemma provides additional information for the case where the soluble radical is trivial: then $G$ can be generated by $(r,l)$-boundedly many  conjugates of $C$.

\begin{lemma}\label{114} Let $G$ be a group containing a Carter subgroup $C$ such that any subgroup generated by a subset of $I_G(C)$ is $r$-generator. Suppose that $R(G)=1$. Then there are $(r, l)$-boundedly many conjugates of $C$ which generate the group $G$,  where $l$ is the maximum of the  ranks of the composition factors of $G$ isomorphic to $PSL_2(q)$  with $q \equiv 7\,(\operatorname{mod} 8)$.
\end{lemma}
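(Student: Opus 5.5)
We keep the notation of the proof of Proposition~\ref{113}: $F^*(G)=T_1\times\dots\times T_n$, with $n\le 3r+1$ by Lemma~\ref{l-113-1} and each $T_i$ of $(r,l)$-bounded rank by Lemma~\ref{l-113-2}. By Proposition~\ref{113}(b), $|G:F^*(G)|$ is $(r,l)$-bounded. Let $H$ be the subgroup generated by a suitable bounded family of conjugates of $C$. It suffices to arrange that $H\geq F^*(G)$ and that $H$ meets every coset of $F^*(G)$.

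The coset condition is handled by Corollary~\ref{c-cart}: $G$ is the normal closure of $C$, so $G=\langle C^g\mid g\in G\rangle$. Pass to the quotient $\bar G=G/F^*(G)$, whose order is $(r,l)$-bounded. There $\bar G$ is generated by the conjugates $\bar C^{\bar g}$. Since a group of order $m$ has all of its generating sets containing a generating subset of size at most $\log_2 m$, some $(r,l)$-boundedly many conjugates $C^{g_1},\dots,C^{g_k}$ already generate $G$ modulo $F^*(G)$.

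The main step is to show that boundedly many conjugates of $C$ generate a subgroup containing each $T_i$; then $F^*(G)$, as the product of the $T_i$, is covered. Fix a factor $T=T_i$. We first need a nontrivial element of $T$ obtainable from conjugates of $C$.
\begin{itemize}
\item If $C$ normalizes $T$, the proof of Lemma~\ref{112}(a) gives an involution $y\in T\cap I_G(C)$.
\item If $C$ does not normalize $T$, choose $c\in C$ and $t\in T$ with $[t,c]\ne 1$. Then $[t,c]=(c^{-1})^{t}c$ lies in $\langle C,C^{t}\rangle$, and its projection to $T$ is nontrivial.
\end{itemize}
In both cases we get an element $y=[g,c]\in\langle C,C^g\rangle$ whose projection to $T$ is a nontrivial element $y_T$.

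By Theorem~\ref{t-hls}, $T$ is generated by $k'$ conjugates $y_T^{h_j}$ with $h_j\in T$, where $k'$ is bounded in terms of the rank of $T$, hence $(r,l)$-bounded. The conjugates $y^{h_j}$ lie in $\langle C^{h_j},C^{gh_j}\rangle$. Let $L$ be the subgroup they generate. Its projection onto $T$ is all of $T$, and each $y^{h_j}$ is supported on only a bounded set of factors, namely those met by the $\langle c\rangle$-orbit.

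To cut down to $T$ itself, take commutators $[L,L^{z}]$ for suitable $z\in T$. These are again generated by conjugates of elements of $L$, so they stay inside the subgroup generated by boundedly many further conjugates of $C$. Using perfectness of $T$ and $n\le 3r+1$, this isolates a subgroup containing $T$.

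The delicate point is this isolation of $T$ from the other coordinates while keeping the count bounded. We will attempt it in two ways:
\begin{itemize}
\item Observe that the subgroup $H_0$ generated by the conjugates produced so far is normalized by the conjugates $C^{g_j}$, or can be enlarged to be so. Since $H_0\cap F^*(G)$ is then normal in $G$ and projects onto every orbit of factors, it is a product of full orbits of factors, hence contains $F^*(G)$.
\item Alternatively, work with $H_0\cap F^*(G)$ directly. It is a subdirect product of the factors in its support, and a standard diagonal-subgroup argument shows it contains each $T_i$ once the generating family is made invariant under a bounded set of conjugating elements.
\end{itemize}

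Combining the $(r,l)$-boundedly many conjugates from all $n\le 3r+1$ factors with the $k$ conjugates handling $G/F^*(G)$, we obtain the required bounded generating family.
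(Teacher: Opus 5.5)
\textbf{There is a genuine gap, at the step you yourself flag as delicate: you never actually isolate $T$.} The rest is sound: the reduction to covering each factor $T$, the generation of $G$ modulo $F^*(G)$ by boundedly many conjugates, and the construction of $y=[t,c]\in\langle C,C^t\rangle$ with nontrivial $T$-projection.

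Neither of your two attempts works as stated.
\begin{itemize}
\item The first is circular. Adding the $C^{g_j}$ makes $H_0\cap F^*(G)$ normal in $H_0$, so it is normalized by a supplement of $F^*(G)$. Normality in $G$ would also need normalization by $F^*(G)$. For a subgroup projecting onto every factor, that is equivalent to being all of $F^*(G)$, which is what you want to prove. In general the implication fails: the diagonal of $T\times T$ is subdirect and normalized by a factor-swapping supplement, but it is not normal.
\item The second needs $H_0$ to be normalized by a generating set $a_1,\dots,a_t$ of $T_i$. Making a family of conjugates invariant under the $a_i$ forces closure under $\langle a_1,\dots,a_t\rangle=T_i$, which is unbounded. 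You cannot instead add the $a_i$ as generators, since they are not conjugates of $C$.
\item Your sketch with $[L,L^z]$ is also incomplete. Perfectness of $T$ keeps the $T$-component alive, but nothing you say kills the other components. That requires the projections of $L$ onto the other factors to be soluble, and you never establish this.
\end{itemize}

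\textbf{How to close the gap within your setup.}
\begin{itemize}
\item Suppose the chosen $c$ normalizes $T$; this includes your first case. Then $y\in T$, so $L=\langle y^{h_j}\rangle=T$ by Theorem~\ref{t-hls}, and no isolation is needed.
\item Suppose $T^c\neq T$. Then $y=t^{-1}t^c$. Since $h_j\in T$ centralizes $T^c$, we get $y^{h_j}=(t^{h_j})^{-1}t^c$. So $L\le T\times T^c$, with projection $T$ onto $T$ and cyclic projection $\langle t^c\rangle$ onto $T^c$. Hence $L'\le T$, and $L'$ projects onto $T'=T$. Therefore $L'=T\le\langle C^{h_j},C^{th_j}\mid j\rangle$.
\end{itemize}

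This is exactly the mechanism of the paper's proof. The paper uses Theorem~\ref{t-beha} to get $C_j=C\cap M_j\neq 1$ for each $C$-orbit $M_j$ of simple factors. It conjugates $C_j$ only by elements of a single factor $S_{js}$, so the projections onto the other factors of $M_j$ remain those of the nilpotent group $C_j$. It then concludes that a derived term satisfies $H_j^{(m)}=S_{js}$.

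Incidentally, your first case does not need Lemma~\ref{112}(a). Since $C_G(C)\le N_G(C)=C$ and $C$ is nilpotent, $C$ cannot centralize $T$. So some $[t,c]\neq 1$ lies in $T$ directly.
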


\begin{proof}  Since $R(G)=1$, the  generalized Fitting subgroup $F^*(G)=S_1\times\dots\times S_n$   is a direct product of non-abelian simple groups $S_i$.  The number $n$ of these simple factors is at most $3r+1$  by  Lemma~\ref{l-113-1}, and each factor has $(r, l)$-bounded rank by Proposition~\ref{113}(a). Set $C_0=C\cap F^*(G)$.  We claim that   $F^*(G)$ is generated by $(r,l)$-boundedly many conjugates of~$C_0$.

Let $F^*(G)=M_1\times\dots\times M_k$, where each $M_j=S_{j1}\times\dots\times S_{jk_j}$ is a direct product of the simple factors of $F^*(G)$ that form one orbit under the action of $C$ on the set of simple factors of $F^*(G)$ induced  by conjugation. Let $C_j=C\cap M_j$ for $j=1,\dots,k$. Observe that  $C_j\neq1$ for each $j$. Indeed, $C$ is a nilpotent self-normalizing group acting on $M_j$. If $C\cap M_j=1$, then  $C_{M_j}(C)=1$, which would imply that $M_j$ is soluble by Theorem~\ref{t-beha}, a contradiction.

The subgroup $C$ transitively permutes the factors $S_{j1},\dots,S_{jk_j}$ of $M_j$ and so the projection of $C_j$ onto each factor is nontrivial.
 Since each  factor $S_{js}$ has $(r,l)$-bounded rank, by Theorem~\ref{t-hls}  the factor $S_{js}$ can be generated by $(r,l)$-boundedly many conjugates of the projection of $C_j$ onto $S_{js}$, say, conjugates by elements $a_1,\dots,a_t
 \in S_{js}$, where $t$ is $(r,l)$-bounded. (Of course, these elements $a_i$ are different for different~$j$.) The subgroup $H_j=\langle C_j^{a_1},\dots , C_j^{a_t}\rangle$ has the same projections as $C_j$ onto all the other simple factors in $M_j$. Since $C_j$ is nilpotent, it follows that some term $H_j^{(m)}$ of the derived series of $H_j$ has trivial projections onto all these other  factors of $M_j$, while the projection of $H_j^{(m)}$  onto   $S_{js}$ is $S_{js}$. Therefore $H_j^{(m)}=S_{js}$; in particular, $S_{js}\leq H_j$.

Since the total number of simple factors of  $F^*(G)$ is $r$-bounded, we obtain that  $F^*(G)$ can be generated by $(r,l)$-boundedly many conjugates of $C_0$.

Since $|G:F^*(G)|$ is $(r,l)$-bounded by Proposition~\ref{113}(b) and $G$ is the normal closure of $C$ by Corollary~\ref{c-cart}, the result follows.
\end{proof}

We can now complete the  proof of Theorem~B, which we restate here for convenience.

\begin{theorem}\label{t-main}
Let $r$ and $l$ be positive integers. Suppose that  a finite group $G$ contains a Carter subgroup $C$ such that any subgroup generated by a subset of $I_G(C)$ can be generated by $r$ elements. Let $l$ be the maximum
rank of composition factors of $G$ isomorphic to $PSL_2(q)$ for $q\equiv 7\,(\operatorname{mod}8)$. Then the derived subgroup  $G'$ has $(r,l)$-bounded rank.
\end{theorem}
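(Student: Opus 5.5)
We reduce to the case where the soluble radical is handled by Theorem~\ref{t-sol} and the top part by Proposition~\ref{113} and Lemma~\ref{114}. Write $R=R(G)$. The image $\bar C$ of $C$ in $\bar G=G/R$ is a Carter subgroup, the hypothesis passes to $\bar G$ by Lemma~\ref{l-inher}, and $R(\bar G)=1$. Proposition~\ref{113} then gives that $G/R$ has $(r,l)$-bounded rank. Lemma~\ref{114} gives elements $g_1,\dots,g_t\in G$, with $t$ $(r,l)$-bounded, such that $G=\langle C^{g_1},\dots,C^{g_t}\rangle R$. Since $G'=[G,C]$ by Corollary~\ref{c-cart} and the image of $G'$ in $G/R$ already has bounded rank, it remains to bound the rank of $G'\cap R$. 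For this it suffices to bound the rank of $[R,C]$ and then control the rest.

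\textbf{Step 1: the soluble subgroup $RC$.} The subgroup $H=RC$ is soluble and contains $C$ as a Carter subgroup, and it inherits the hypothesis by Lemma~\ref{l-inher}(b). Theorem~\ref{t-sol} applied to $H$ gives that $H'=[RC,C]\ge [R,C]$ has $r$-bounded rank. The same holds for every conjugate, so each $[R,C^{g_i}]$ has $r$-bounded rank, because $I_G(C^{g})=I_G(C)^g$.

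\textbf{Step 2: normality in $G$.} Let $K=\prod_{i}[R,C^{g_i}]$. I would show that $[R,\langle C^{g_1},\dots,C^{g_t}\rangle]$ has bounded rank. The natural route is to work modulo suitable normal subgroups and use that a product of $t$ subgroups of bounded rank, normalized appropriately, has bounded rank. One concrete option is to pass to $G/R'$ after first bounding the rank of $R'$ (or of $R\cap G''$). Alternatively, apply Theorem~\ref{t-sol} to the soluble group $R\,C^{g_i}$ together with the Lubotzky--Mann bound on ranks of extensions. Once $[R,X]$ with $X=\langle C^{g_i}\rangle$ has bounded rank, note that $G=XR$. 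Then $[R,G]=[R,X][R,R]$, and modulo $[R,X]$ the group $X$ centralizes $R$.

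\textbf{Step 3: the remaining piece.} Modulo $[R,X]$ (normal in $G$ once one checks $[R,X]=[R,G]$ modulo $R'$; I would handle this by an induction on the derived length of $R$, which is not a priori bounded, so instead by induction on $|G|$), $R$ centralizes $X$ up to $R'$. In particular $R\cap G'$ becomes essentially central-by-bounded. Then $G'/(G'\cap R)$ has bounded rank and $G'\cap R$ is covered by $[R,C]$-type pieces, which gives a bounded rank for $G'$.

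\textbf{Main obstacle.} The hard step is passing from the bounded ranks of the individual $[R,C^{g_i}]$ to the normal subgroup $[R,G]$, while also accounting for $R'$. I would try first a cleaner route: take a minimal counterexample and choose a minimal normal subgroup $N\le R$. By induction $(G/N)'$ has bounded rank, and $N$ is elementary abelian. Then $[N,G]=\prod_i[N,C^{g_i}]$, because $G=\langle C^{g_i}\rangle R$ and $R$ is not needed for abelian $N$ only after reducing to the case $R$ centralizes $N$; otherwise use $N=[N,R]$-type arguments with Lemma~\ref{last} applied to $F(RC)$. This gives $\operatorname{rank}[N,G]\le tr$. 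If $[N,G]=1$, then $N$ is central, and one concludes by a Schur-multiplier or Lubotzky--Mann type bound for $G'\cap N$ in terms of the rank of $G'/N$.
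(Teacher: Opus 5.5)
Your setup and Step~1 agree with the paper. Proposition~\ref{113} bounds the rank of $G/R$, Lemma~\ref{114} gives $G=R\langle C^{g_1},\dots,C^{g_t}\rangle$, and Theorem~\ref{t-sol} applied to $RC^{g_i}$ bounds the rank of each $[R,C^{g_i}]$. The proof, however, stops exactly at what you call the ``main obstacle''. That obstacle is not real, and you are missing the observation that removes it.

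Each $[R,C^{g_i}]$ is normal in $RC^{g_i}$, hence in $R$. So $K=\prod_i[R,C^{g_i}]$ is a subgroup of $(r,l)$-bounded rank and is normal in $R$. It is also $C^{g_j}$-invariant for every $j$, since $k^c=k[k,c]$ with $[k,c]\in[R,C^{g_j}]\le K$. Hence $K$ is normal in $R\langle C^{g_1},\dots,C^{g_t}\rangle=G$. Indeed $K=[R,X]$, which is normal simply because $R\trianglelefteq G=\langle R,X\rangle$, so your Step~2 target is immediate.

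Modulo $K$, the subgroup $C^{g_1}$ centralizes $R$. Then every conjugate of $C$ does, because $[R,C^g]=[R,C^{g_1}]^{g_1^{-1}g}$. Since $G$ is the normal closure of $C$ (Corollary~\ref{c-cart}), $R/K$ is central in $G/K$. Now $(G/K)/Z(G/K)$ is a quotient of $G/R$, so the Lubotzky--Mann theorem bounds the rank of $(G/K)'$, and hence of $G'$.

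This also shows that $R'$ needs no separate treatment: $R'$ lies in $K$. Your claim that modulo $[R,X]$ the subgroup $R$ centralizes $X$ only ``up to $R'$'' is wrong. The centralization is exact, and transferring it from $C^{g_1}$ to all conjugates of $C$ is the step that disposes of $R'$.

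The fallback routes you offer do not fill the gap.
\begin{itemize}
\item You give no method for bounding the rank of $R'$ first.
\item The minimal-counterexample induction on $|G|$ cannot close. If $(G/N)'$ has rank at most $f(r,l)$ by induction and $N$ adds up to $tr$ (or a Schur-multiplier contribution when $N$ is central), you only get $\operatorname{rank} G'\le f(r,l)+tr$. The bound grows with each inductive step, so no fixed function $f$ survives.
\item The formula $[N,G]=\prod_i[N,C^{g_i}]$ is justified only when $R$ centralizes $N$. For the other case you give no actual argument.
\end{itemize}
As written, the proposal does not reach a proof.
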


\begin{proof}
Recall that $G$ is  a  group containing a Carter subgroup $C$ such that any subgroup generated by a subset of $I_G(C)$ is $r$-generator. We want to show that  $G'$ has $(r,l)$-bounded rank, where $l$ is the maximum of the  ranks of the composition factors of $G$ isomorphic to $PSL_2(q)$  with $q \equiv 7\,(\operatorname{mod} 8)$.

Let $R=R(G)$ be the soluble radical of $G$. By Lemma~\ref{114} there are $(r,l)$-boundedly many conjugates $C_1,\dots,C_m$ of $C$ such that $G=R\langle C_1,\dots,C_m\rangle $.  Observe that $C_i$ is a Carter subgroup of $RC_i$ for any $i=1,\dots,m$. Hence  $[RC_i,C_i]$ has $r$-bounded rank by Theorem~\ref{t-sol}. In particular,  each $[R,C_i]$ has $r$-bounded rank. Since the $[R,C_i]$ are normal subgroups of $R$, it follows that the product
$$
L=\prod_{i=1}^{m}{[R,C_i]}
$$
has $(r,l)$-bounded rank, since $m$ is  $(r,l)$-bounded.

We claim that $L$ is a normal subgroup of $G$.
Indeed,  $L$ is normal in $R$, and $L$ is $C_j$-invariant for every $j$ since it contains $[R,C_j]$; hence $L$ is normal in $G=R\langle C_1,\dots,C_m\rangle $.

Since $L$ has $(r,l)$-bounded rank, we can pass  to the quotient $G/L$ and thus  assume that $[R,C_j]=1$ for every $i$. (We used the fact that the image of $R$ in $G/L$ is the soluble radical of $G/L$.) Since $R$ is normal, we obtain that $[R,C_j^g]=1$ for any conjugates $C_j^g$. Since the group $G$ is generated by the conjugates of a Carter subgroup by Corollary~\ref{c-cart}, it follows that then $R$ is central in $G$.

By Proposition~\ref{113} the quotient $G/R$ has $(r,l)$-bounded rank. Since $R$ is central in $G$, the rank of $G'$ is $(r,l)$-bounded by  a theorem of Lubotzky and Mann \cite[Theorem~4.2.3]{LM}.
\end{proof}

\begin{proof}[Proofs of Corollaries~C and D]
The proof of these corollaries is obtained by repeating word-for-word the proof of Theorem~\ref{t-main}, with ``$(r,l)$-bounded" replaced everywhere with ``$r$-bounded", and with the references to Lemma~\ref{112} replaced with the references to Lemma~\ref{112c}.
\end{proof}

\section{Counterexamples}\label{counterexamples}

A minor variation of the arguments in \cite[\S\,5]{agks} gives the following.

\begin{lemma} \label{l-6.1}
Let $G= PSL_2(p^e)$, where $p$ is a prime such that  $p \equiv 7\,(\operatorname{mod}16)$ and $e$ is odd.   If $x \in G$ is a $2$-element, then every subgroup
generated by a subset of $I_G(x)$ can be generated by $2$ elements.
\end{lemma}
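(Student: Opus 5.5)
The plan is to use the subgroup structure of $G=PSL_2(q)$, $q=p^e$, together with the arithmetic of $q$. Since $p\equiv 7\pmod{16}$ and $e$ is odd, $q\equiv 7\pmod{16}$, because $7^2\equiv 1\pmod{16}$. Hence $q+1\equiv 8\pmod{16}$, so the $2$-part of $q+1$ is exactly $8$, while the $2$-part of $q-1$ is $2$. The Sylow $2$-subgroups of $G$ are therefore dihedral of order $8$. Consequently a $2$-element $x\ne 1$ has order $2$ or $4$, and $x$ lies in a cyclic torus $T$ of order $(q+1)/2$, which is nonsplit and contains no unipotent elements.

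The first step is to show that no element of $I_G(x)$ has order divisible by $p$. I would do this as in \cite[\S\,5]{agks}, via the trace of a lift to $SL_2(q)$. Lift $x$ to $\hat x$ of order $4$ or $8$ with trace $\tau$. For $g\in SL_2(q)$, the trace $\operatorname{tr}(\hat x^{-1}\hat x^{g})$ ranges over values determined by the norm form of the quadratic extension. A unipotent commutator would force $\operatorname{tr}=\pm 2$. One checks that this requires a certain element, built from $\tau^2-4$ (a nonsquare since $T$ is nonsplit) and $\pm 2$, to be a square in $\mathbb F_q$. The congruence $q\equiv 7\pmod{16}$ together with $e$ odd excludes this; in particular $-1$, $2$ and $\pm 2\pm\sqrt2$ have the relevant quadratic characters in $\mathbb F_q$.

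Thus every subgroup $H$ generated by a subset of $I_G(x)$ is a subgroup of $G$ of order prime to $p$. By Dickson's classification, $H$ is cyclic, dihedral, $A_4$, $S_4$, $A_5$, or a subfield subgroup $PSL_2(p^f)$ or $PGL_2(p^f)$. The subfield subgroups are excluded because they contain unipotent elements, while $H\cap I_G(x)$ generates $H$ and contains no $p$-elements. I need to handle this carefully, since the generated subgroup could contain $p$-elements even when its generators do not. My fix is to note that $f\mid e$ with $f$ odd, so the same trace argument applies inside the subfield. Then $PSL_2(p^f)$ would be generated by non-unipotent commutators, but I actually need more: if $p$ divides $|H|$, then $H$ is a subfield group, and the conjugates of $x$ under $H$ already produce unipotent commutators inside $H$. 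I would show this by applying the trace computation inside the subfield $\mathbb F_{p^f}$ with $g\in H$, after conjugating $x$ into $H$ using a suitable generator. All remaining groups in the list are $2$-generated, and this gives the lemma.

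The main obstacle is the trace/quadratic-residue computation in the first step. This is precisely where $q\equiv 7\pmod{16}$, rather than merely $q\equiv 7\pmod 8$, is needed to treat elements of order $4$. I would try to carry out this computation first, case by case for $x$ of order $2$ and order $4$.
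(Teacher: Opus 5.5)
Your proposal has a genuine gap: its first step is false, and the reduction after it leaves out the subgroups that actually matter.

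\textbf{The first step fails for $x$ of order $4$.} Lift $x$ to $\hat x\in SL_2(q)$ of order $8$, so that $\operatorname{tr}\hat x=\tau$ with $\tau^2=2$. By the Fricke identity, $\operatorname{tr}[g,\hat x]=s^2-\tau st+t^2$, where $s=\operatorname{tr}g$ and $t=\operatorname{tr}(g\hat x)$, and every pair $(s,t)\in\mathbb{F}_q^2$ occurs. Since $\tau^2-4=-2$ is a nonsquare, this form is the norm form $N_{\mathbb{F}_{q^2}/\mathbb{F}_q}(s-\omega t)$ for a primitive $8$th root of unity $\omega$. Hence it represents every element of $\mathbb{F}_q^*$, and no quadratic-residue condition can exclude any value.

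The value $+2$ only yields $[g,\hat x]=1$, because a common eigenvector with $\hat x$ forces $g\in C(\hat x)$. The value $-2$, however, yields commutators $-u$ with $u\ne 1$ unipotent, and their images in $PSL_2(q)$ have order $p$. Concretely, in $PSL_2(7)$ the class multiplication coefficient gives exactly $7$ pairs $(y,z)\in 4A\times 4A$ whose product $yz$ equals a fixed element of $7A$. So $I_G(x)$ contains elements of order $7$ for $x$ of order $4$. The paper likewise excludes only genuinely unipotent commutators in $SL_2(q)$ and explicitly allows the classes of $-u$.

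\textbf{The reduction is wrong even in the involution case.} There your first step is true, but the next step still fails. Your list from Dickson omits subgroups of a Borel subgroup $B=U\rtimes T$, and these are exactly the subgroups that can need many generators, since $U$ is elementary abelian of rank $e$. Two semisimple elements of $B$ lying in different tori generate a subgroup meeting $U$ nontrivially. So ``generators of order prime to $p$'' does not give ``$|H|$ prime to $p$'', and ``$p\mid |H|$ implies $H$ is a subfield subgroup'' is false. On the other hand, you never need to exclude subfield subgroups: $PSL_2(p^f)$ and $PGL_2(p^f)$ are $2$-generated anyway.

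\textbf{What is missing is control of $I_G(x)\cap B$.} The paper obtains it in two cases.
\begin{itemize}
\item For an involution $x$, every commutator $[g,x]=x^gx$ is inverted by $x$. Hence $I_G(x)\cap B$ lies in the single torus $B\cap B^x$, and all such subgroups are cyclic.
\item For $x$ of order $4$, the triples $(a,b,c)$ with $abc=1$, $a,b\in x^G$ and $c$ in a fixed noncentral class form a regular $PGL_2(q)$-orbit. From this, each Borel subgroup contains at most one element of $I_G(x)$ from each noncentral class, including the $-u$ classes. It follows that elements of $I_G(x)\cap B$ conjugate into $SL_2(p^f)$ already lie in $SL_2(p^f)$. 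Finally, for $S\subseteq I_G(x)\cap B$, the group $\langle S\rangle\cap U$ is an irreducible, hence cyclic, module for the cyclic quotient $\langle S\rangle/(\langle S\rangle\cap U)$, which gives $2$-generation.
\end{itemize}
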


\begin{proof}
With a slight abuse of notation we will work in $G=SL_2(p^e)$ denoting by $x$ a $2$-element that is a pre-image of a given $2$-element of $PSL_2(p^e)$. Let $H$ be a subgroup generated by a subset of $I_G(x)$; we need to show that $H$ is 2-generated. Note that any subgroup of $G$ not contained in a Borel subgroup is generated by at most two elements. Indeed, the subgroups of~$SL_2(p^e)$ are either contained in a Borel subgroup, or are dihedral, or are contained in a subgroup isomorphic to  $S_4$ or $A_5$, or are isomorphic to $PSL_2(p^f)$ for some $f$  dividing $e$. So it remains to consider the case where $H \le B = UT$, where  $B$ is a Borel subgroup,  $U$ its unipotent radical, and $T$ its maximal torus. Note that the conditions $p \equiv 7\,(\operatorname{mod}16)$ and $e$ odd mean that the image of $B$ in $PSL_2(p^e)$ has odd order and the Sylow $2$-subgroup of $SL_2(p^e)$  is generalized quaternion of order $16$ defined over the prime field (as a subgroup of $SL_2(p)$). To lighten the notation, we sometimes write $q=p^e$; note that $q \equiv 7\,(\operatorname{mod}  16)$.

First we consider the case where the image of $x$ is an involution in $PSL_2(q)$.      We observe that   $U \cap I_G(x) =\{1\}$, since $x$ cannot invert a nontrivial
element of~$U$ because $B$ has odd order over the centre.  We have $I_G(x) \cap B = T_x$, where $T_x$ is the (unique) maximal torus of $B$ that is inverted by $x$; actually,  $T_x = B \cap B^x$. Thus any subgroup generated by a subset of $B \cap I_G(x)$ is cyclic, and the case of an involution in $PSL_2(q)$ is complete.

From now on we assume that $x^2$ is not central in $G=SL_2(q)$. Note that then $x$ has order 8 and is conjugate to $x^{-1}$ in a generalized quaternion Sylow $2$-subgroup. The centralizer $C_G(x)$ is a maximal nonsplit torus of order $q+1$.  The action of $GL_2(q)$ by conjugation  induces an action of $PGL_2(q)$ on $SL_2(q)$. The centralizer of $x$ in $PGL_2(q)$ is also a torus of order $q+1$.

Recall that it suffices to show that any subgroup generated by a subset of $I_G(x) \cap B$ is generated by two  elements.

Let $t $ be any noncentral semisimple element
of $B$.  Consider the set of triples $(a,b,c)$ of elements of $G$ with  $abc=1$ such that $a$ and $b$ are conjugate to $x$  and $c$ is conjugate to $t$.  This set is nonempty by \cite{Gow, GT}.  Since the centralizer of $x$ in $PGL_2(q)$ is a torus of order $q+1$ and the centralizer of $t$ is a torus of order $q-1$, the centralizer of the subgroup generated by any conjugate of $x$ and  any conjugate of $t$ is trivial in $PGL_2(q)$. Clearly, any such subgroup is irreducible and so absolutely irreducible. It follows by \cite[Theorem~2.3]{SV} that all such triples form a regular $PGL_2(q)$-orbit.

Recall that $x$ is conjugate to $x^{-1}$.
If we only consider such triples with $a=x^{-1}$, they form a single orbit under $C$, the centralizer of $x$ in $PGL_2(q)$. So there are $q+1$  triples   $(x^{-1},b,c)$ with $x^{-1}bc=1$ such that $b$ is conjugate to $x$ and $c$ is conjugate to $t$.
Note that for such triples we have  $c=t^u=(x^{-1}x^v)^{-1}=[v,x]\in I_G(x)$ for some $u,v\in G$. Since $C$ acts regularly on
the set of Borel subgroups, we obtain  $|B \cap I_G(x) \cap t^G| =1$.  Thus there is a unique conjugate $s$ of $t$ such that $s=[v,x]\in I_G(x)$.

If we replace $t$ with a nontrivial unipotent element, in fact there are no solutions~\cite{gu1}. The only other noncentral class is the class of  $-u$ for $u$ a nontrivial unipotent element. The argument above applies to that class as well and so $|I_G(x) \cap B \cap(-u)^G|\leq 1$ and $I_G(x) \cap B \cap (-u)^G \subset SL_2(p)$.

We now claim that if $y \in I_G(x)\cap B$ is conjugate to an element $z$ of
$SL_2(p^f)$ for some $f$ dividing $e$, then $y \in SL_2(p^f) \cap B$. Indeed, we can assume that $z$ is a semisimple element in a Borel subgroup of $ SL_2(p^f)$. Since $x\in SL_2(p)$, by the above argument applied to $SL_2(p^f)$ there is a conjugate of $z$ within $SL_2(p^f)$ that belongs to $I_G(x)$. By the uniqueness established in the preceding paragraph, this conjugate must coincide with~$y$, so $y\in SL_2(p^f)$.

Consider some subset $S$ of $I_G(x) \cap B$; we need to prove that $\langle S \rangle$ can be generated by two elements. Write the nontrivial elements of $S$ as $w^{j_k}u$ with $1 \le j_k < q-1$ and with various $u \in U$, for which we do not use indices to lighten the notation.

Suppose that for an element $w^{j_k}u\in S$ we have $w^{j_k} \in SL_2(p^f)$ for some $f$ dividing $e$. In a suitable basis, $w^{j_k}$ is a diagonal matrix, while $u$ is upper unitriangular. There is a diagonal matrix $d$ such that the conjugate $d^{-1}ud$ belongs to $SL_2(p)$. Namely, since $q \equiv 7\,(\operatorname{mod}  16)$,  we have  $u=I_2+\pm\alpha e_{12}$, where $\alpha$ has odd order in $\mathbb{F}_q^*$ and so there is $\beta\in \mathbb{F}_q$ such that $\beta ^2=\alpha$. Then for $d=\operatorname{diag} (\beta, \beta ^{-1})$ we have $d^{-1}ud=I_2+\pm e_{12}\in SL_2(p)$; note also that $d$ commutes with $w^{j_k}$. As a result, $d^{-1}w^{j_k}ud$ belongs to $SL_2(p^f)$, whence  $w^{j_k}u$ belongs to $SL_2(p^f)$ by the above.

Let $f$ be the smallest positive integer such that  $SL_2(p^f)$ contains all the factors $w^j$ of the elements $w^ju$ of $S$. Then all the elements $w^ju$ of $S$ also belong to $SL_2(p^f)$ as shown above. It follows that $\langle S \rangle/(\langle S \rangle\cap U)$ acts irreducibly on $\langle S \rangle\cap U$ regarded as a vector space over the prime field $\mathbb{F}_p$, so that $\langle S \rangle\cap U$ is a cyclic $\mathbb{F}_p(\langle S \rangle/(\langle S \rangle\cap U))$-module. Since $\langle S \rangle/(\langle S \rangle\cap U)$ is cyclic, it follows that $\langle S \rangle$ is $2$-generator.

Thus, we have shown that any subgroup of $G$ generated by a subset of $I_G(x)$
can be generated by at most two elements.
\end{proof}

\begin{example}  Let $G = PSL_2(p^e)$ with $p \equiv 7\,(\operatorname{mod}  16)$ and $e$ odd.   Then a Sylow $2$-subgroup $K$ of $G$ is dihedral
of order $8$ and is a Carter subgroup of $G$ by \cite{vdo4}. We claim that every subgroup of $G$ generated by a subset of $I_G(K)$ can be generated by
$14$ elements. At the same time, the rank of $G$ is unbounded, as it is at least $e$. Thus, the bound for the rank in Theorem~B cannot depend on $r$ alone.

To prove the above claim, we note that $S=\bigcup_{x\in K} S\cap I_G(x)$. Each subgroup $\langle S\cap I_G(x)\rangle$ can be generated by 2 elements by Lemma~\ref{l-6.1}. Since $K$ has seven nontrivial elements, the subgroup $\langle S\rangle$ can be generated by $14$ elements.
\end{example}

\newpage

\section*{Supplementary table to \cite{vdo4}}
\label{s-sup}

This is the amended Table~8 of \cite{vdo4} (its title ``Table~10'' corresponds to the ArXiv version of  \cite{vdo4}). The first column contains a simple group $S$ such that the Carter subgroups of $\mathop{Aut}(S)$ are classified. The group of inner-diagonal automorphisms of $S$ is denoted by $\widehat{S}$. The second column contains the conditions for
a subgroup $A\leqslant   \mathop{Aut}(S)$ to contain a Carter subgroup. The third column describes the structure of a
Carter subgroup~$K$. The subgroups of $\mathop{Aut}(S)$ strictly between $S$ and $A$ have no Carter subgroups. A Sylow $r$-subgroup of a group $G$ is denoted by $P_r(G)$. A field automorphism of a group
of Lie type $S$ is denoted by $\varphi$, and  a graph automorphism of $S$
contained in $K$ is denoted  by $\tau$ (since the graph
automorphisms of order $2$ and $3$ of $D_4(q)$ do not commute, only one of them can be in $K$). If $A$ does not
contain a graph automorphism, then we assume that $\tau  = 1$. A field automorphism of $S$ of
maximal order contained in $A$ is denoted by $\psi$ (it is a power of $\varphi$, but $\langle\psi \rangle$ may different from $\langle\varphi\rangle$). A Carter
subgroup of order $2\cdot 3$ of $\widehat{{}^2A_2(2)}$ is denoted by $K(U_3(2))$. If $G$ is solvable,  then $K(G)$ denotes a Carter subgroup of $G$.   A~graph-field automorphism of order $2t$ of $A_2(2^{2t})$ is denoted by $\zeta$.

 \hspace{-4.5cm} \includegraphics[width=1.3\textwidth]{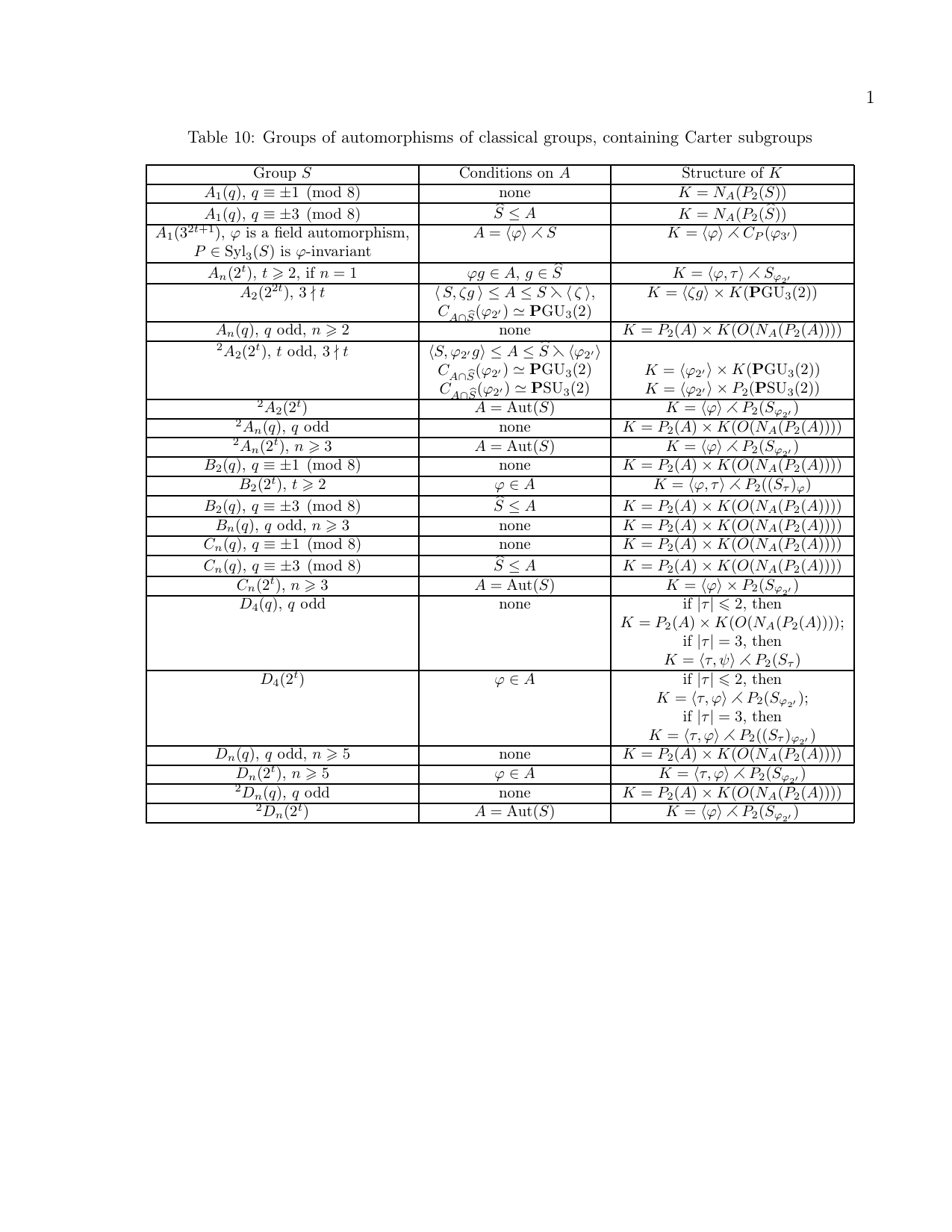}

\end{document}